\DeclareMathOperator\sgn{sgn}
\DeclareMathOperator\arcoth{arcoth}
\DeclareMathOperator\var{Var}
\newtheorem{theorem}{Theorem}[section]
\newtheorem{prop}[theorem]{Proposition}
\newtheorem{cor}[theorem]{Corollary}
\newtheorem{lem}[theorem]{Lemma}
\newtheorem{assumption}[theorem]{Assumption}
\newtheorem*{theorem*}{Theorem}
\newtheorem*{proposition*}{Proposition}
\newtheorem*{lemma*}{Lemma}
\newtheorem*{corollary*}{Corollary}
\newtheorem*{remark*}{Remark}
\newtheorem*{observation*}{Observation}
\newtheorem*{example*}{Example}
\newtheorem*{assumption*}{Assumption}
\theoremstyle{definition}
\newtheorem{definition}[theorem]{Definition}
\newtheorem*{definition*}{Definition}
\title{\begin{Huge}An explicit solution of a non-linear quadratic constrained stochastic control problem with an application to optimal liquidation in dark pools with adverse selection\end{Huge}\footnote{I wish to thank Peter Bank, Ulrich Horst, Werner Kratz and Torsten Sch\"oneborn for useful discussions and comments. I am also grateful to seminar participants at Humboldt University Berlin and at German Probability and Statistics Days Mainz.
This research was supported by Deutsche Bank through the Quantitative Products Laboratory.} \\
}
\author{Peter Kratz\footnote{Laboratoire d'Analyse, Topologie, Probabilit\'es, Aix-Marseille Universit\'e, 39, rue F. Joliot-Curie,
13453 Marseille Cedex 13, France. Email: kratz@mathematik.hu-berlin.de}}
\begin{document}

\maketitle

\begin{abstract}
\noindent
We study a constrained stochastic control problem with jumps; the jump times of the controlled process are given by a Poisson process. The cost functional comprises quadratic components for an absolutely continuous control and the controlled process and an absolute value component for the control of the jump size of the process. We characterize the value function by a ``polynomial'' of degree two whose coefficients depend on the state of the system; these coefficients are given by a coupled system of ODEs. The problem hence reduces from solving the Hamilton Jacobi Bellman (HJB) equation (i.e., a PDE) to solving an ODE whose solution is available in closed form. The state space is separated by a time dependent boundary into a continuation region where the optimal jump size of the controlled process is positive and a stopping region where it is zero. We apply the optimization problem to a problem faced by investors in the financial market who have to liquidate a position in a risky asset and have access to a dark pool with adverse selection. 
\end{abstract}

\section{Introduction}

For most non-linear-quadratic control problems, closed form solutions are rather difficult to obtain as the form of the value function cannot be guessed easily. In this paper, we propose an interpolation method for a non-linear-quadratic stochastic control problem with terminal constraint whose cost functional comprises both quadratic components and an absolute value component; we derive the solution of the problem in closed form and show that the value function is a classical solution of the corresponding Hamilton Jacobi Bellman (HJB) equation.

Given a control $u=(\xi,\eta)$, the controlled process $X^u$ is given by
\[
dX^u(t)= x- \int_0^ t \xi(s) ds - \int_0^t \eta(s) d\pi(s)
\]
and the terminal constraint
\begin{equation} \label{EqConstraintIntro}
\lim_{t\rightarrow T-} X^u(t)= 0;
\end{equation}
here, $x\in \mathds{R}$ is the initial state of the system at time $t \in [0,T]$ and $\pi$ is a Poisson process with intensity $\theta > 0$. The costs of $u$ are given by
\[
\mathbb{E} \Big[ \int_0^T \lambda \xi(t)^2  +  \gamma |\eta(t)| +  \alpha X^u(t)^2 ds \Big]
\]
for constants $\lambda,\gamma>0$ and $\alpha \geq 0$.

The solution of the control problem is driven by two trade-offs. Firstly, the trade-off between the quadratic costs of the control component $\xi$ and the absolute value costs of the control component $\eta$: in order to accomplish the Constraint~\eqref{EqConstraintIntro}, applying $\eta$ is more attractive (however uncertain) for large states while applying $\xi$ is more attractive for small states. Secondly, the trade-off between the cost component $\alpha (X^u)^2$ and the other two components: $\alpha (X^u)^2$ speeds up the controls while $\lambda \xi^2$ and  $\gamma |\eta|$ slow them down.

The complexity of the control problem stems from the combination of the Terminal Constraint~\eqref{EqConstraintIntro} and the non-linear quadratic form of the cost functional. It turns out that we require qualitative information of the speed of convergence of the controlled process $X^u$ as $t\rightarrow T-$ in order to conduct a verification because. We overcome this problem by constructing the solution of the control problem in closed form.

Because of the absolute value costs $\gamma |\eta|$, we cannot expect the value function $v$ of the problem to be a quadratic polynomial. However, it should in some sense be similar to a polynomial. By writing the value function as a ``polynomial'' whose coefficients depend on the state of the system,
\begin{equation} \label{EqIntroQuasi}
v(T,x)=C_1(T,x) x^2 + C_2(T,x) x + C_3(T,x),
\end{equation}
we are able to reduce the complexity of the problem: instead of characterizing $v$ as the solution of the Hamilton Jacobi Bellman (HJB) equation corresponding to the problem (i.e., a PDE), we are able to characterize it via~\eqref{EqIntroQuasi} and a system of ODEs for the coefficients $C_1$, $C_2$ and $C_3$; this system can be solved explicitly. 

It turns out that the value function is a true polynomial for small states $|x| \leq \beta(T)$ and for large states $|x| \geq \tilde{\beta}(T)$ (for functions $0 < \beta \leq \tilde{\beta}$). For intermediate states ($\beta(T) < |x| < \tilde{\beta}(T)$) the value function is given by an ``interpolation'' of these polynomials; we derive an implicit representation of the interpolation function as the inverse of an explicitly known injective function. The interpolation method we use appears promising for the solution of related control problems with both quadratic and absolute value cost components.\footnote{The application of the method to related control problems requires significant adjustments in order to account for problem specific properties, cf.~Section~\ref{SecAdvHeuristic}.} In the present case, the fact that the candidate of the value function and the optimal control is given in closed form is essential for the verification theorem as it enables us to resolve problems arising due to the singularity of the value function at time $T$ as mentioned above.

The optimal control of the jump size of the controlled process, $\eta^*$, divides the state space into two regions: a ``continuation region'' ($|x| > \beta(t)$) where $|\eta^*|>0$ and a ``stopping region'' ($|x| \leq \beta(T)$) where $\eta^*=0$. The structure of the optimal control hence resembles the solution of free boundary problems arising in optimal stopping (see, e.g., \cite{Pham2009}) and in singular control problems connected to optimal stopping (see, e.g., \cite{Benth2004}). The control problem studied in this paper is obviously not a singular control problem. Nevertheless, the similarity of the structure is not surprising due to the ``linearity'' of the costs for the jump size control $\eta$. Similarly, we prove a ``smooth-fit'' principle for the value function on the boundary $\beta$; it is essential in the heuristic derivation of the solution of the optimization problem. 

The paper contributes to the literature on stochastic control with jumps; standard references are the book by~\cite{OksendalSulem} and~\cite{Hanson2007}. Several papers study stochastic control problems with similar terminal constraints. \cite{Schied2010} study multi-dimensional CARA utility maximization without jumps. If $\gamma=0$, the constrained control problem is linear-quadratic; a multi-dimensional version of the problem was solved by~\cite{Kratz2012}. Finally, \cite{Naujokat2010} and~\cite{Naujokat2011} solve similar control problems by using the stochastic maximum principle and obtain presentations of the optimal controls via FBSDEs. I am not aware of any other work where the value function is represented by a quasi-polynomial and thus a closed form solution for a non-linear-quadratic stochastic control problem is obtained.
\\

We apply the solution of the optimization problem to a model of the financial market. In the last years, equity trading has been transformed by the advent of so called dark pools. These alternative trading venues differ significantly from classical exchanges and have gained a significant market share, especially in the US. Dark pools vary in a number of properties such as crossing procedure, ownership and accessibility (see~\cite{Mittal2008} and \cite{Degryse2009a} for further details and a typology of dark pools). However, they generally share the following two stylized facts. First, the liquidity available in dark pools is not quoted, hence making trade execution uncertain and unpredictable. Second, dark pools do not determine prices. Instead, they monitor the prices determined by the classical exchanges and settle trades in the dark pool only if possible at these prices. Thus, trades in the dark pool have no or less price impact.\footnote{For empirical evidence of lower transaction costs or price impact of dark pools compared to classical exchanges see, e.g., \cite{Conrad2003} and~\cite{Fong2004}.} 

We consider an investor who has access both to a classical exchange (also called ``primary venue'' or ``primary exchange'') and to a dark pool. We study a model for optimal liquidation of a large single-asset position within a finite time horizon $[0,T]$ reflecting the trade-off between execution uncertainty of dark pool orders and price impact costs of trading at the primary venue. Additionally, we assume that the asset price at the exchange and the execution of orders are connected by the following phenomenon: Liquidity seeking traders find that their trades in the dark pool are usually executed just before a favorable price move, i.e., exactly when they do not want them to be executed since they miss out on the price improvement. In advance of adverse price movements, they observe that they rarely find liquidity in the dark pool. We call such a phenomenon \emph{adverse selection}. Adverse selection in dark pools is an important issue for practitioners. There are several different mechanism through which adverse selection can be created.\footnote{A detailed discussion from a practical point of view can be found in~\cite{Mittal2008}, who states that information leakage, e.g., due to dark pool pinging (i.e., the attempt to obtain information about liquidity by placing small orders in the dark pool), is a possible reason for adverse selection.} While (absolute continuous) trading at the exchange yields quadratic price impact costs, adverse selection yields absolute value costs for the dark pool orders (whose execution is uncertain, modeled by a Poisson process); furthermore, we add a risk component which is quadratic in the asset position. We hence generalize the single-asset result of~\cite{Kratz2012} who neglect adverse selection in order to render the optimization problem linear-quadratic; the results we obtain are qualitatively different. Optimal liquidation in dark pools with adverse selection in discrete time has been treated in~\cite{Kratz2010}.
\\

The remainder of the article is structured as follows. In Section~\ref{ChaptPortfolioSecModel}, we describe the optimization problem rigorously. In Section~\ref{SecAdvHeuristic}, we derive candidates for the value function and the optimal control in closed form by extensive heuristic considerations. We analyze the candidate value function in Section~\ref{SecPropValAdv}: it is continuously differentiable and a classical solution of the HJB equation with singular terminal condition corresponding to the Terminal Constraint~\eqref{EqConstraintIntro} of the optimization problem. In Section~\ref{SecVerificationAdv}, we execute a verification argument. Finally, we apply the solution of the optimization problem to the above mentioned optimal liquidation problem in dark pools in Section~\ref{SecAdvProperties}. Furthermore, we discuss the properties of the optimal control with regard to the application.

\section{Optimization problem} \label{ChaptPortfolioSecModel}

In the following, we specify the optimization problem. 
For a fixed finite time interval $[0,T]$, we consider the stochastic basis $(\Omega, \mathcal{F}, \mathbb{P}, \mathbb{F}=(\mathcal{F}_t)_{t \in[0,T]})$, where $(\mathcal{F}_t)_t$ is the completion of $\big(\sigma\big(\pi(s) | 0 \leq s \leq t \big)\big)_t$ for a 
\[
\text{Poisson process } \pi=(\pi(t))_{t \in [0,T]} \text{ with intensity } \theta > 0.
\]
We call a two-dimensional stochastic process 
\[
(u(t))_{s \in [0,T)}=(\xi(t),\eta(t))_{t \in [0,T)}
\]
a \emph{control} if $\xi$ is progressively measurable and $\eta$ is predictable. We fix $T>0$ and $x \in \mathds{R}$; given a control $u$, the \emph{controlled process} at time $t \in [0,T)$ is given by 
\begin{equation} \label{EqCSDE}
X^u(t):=  x- \int_0^t \xi(s) ds -  \int^t_0\eta(s) d\pi(s)
\end{equation}
and the costs $u$ are given by
\begin{equation} \label{Costfunctional}
J(T,x,u):= \mathbb{E} \Big[ \int_0^T f(\xi(t),\eta(t),X^u(t)) dt \Big],
\end{equation}
where
\[
f:\mathds{R}^{n } \times \mathds{R}^{n} \rightarrow \mathds{R}, \quad f(\xi,\eta ,x) :=  \lambda \xi^2  + \gamma |\eta| +  \alpha x^2 \quad \text{for } \lambda,\gamma >0,  \alpha \geq 0.
\]
In view of Equation~\eqref{EqCSDE} (and somewhat unconventionally), we denote $\xi$ the \emph{absolutely continuous control} and $\eta$ the \emph{jump control} (note that the jump size and not the jump time is controlled).
We impose the following conditions.

\begin{definition}  \label{DefAdmStr}
Let $T>0$ and $x \in \mathds{R}$ be fixed and
$
u=(u(t))_{t \in [0,T)}=(\xi(t),\eta(t))_{t \in [0,T)}
$
be a control. \\
We call $u$ an
\emph{admissible control} and write $u \in \mathbb{A}(T,x)$ if 
\begin{equation} \label{EqIntegrability}
J(T,x,u)< \infty
\end{equation}
and
\begin{equation} \label{EqLiqConstraint}
\lim\limits_{t \rightarrow T-} X^u(t)= 0 \quad \text{a.s.}\footnote{The Terminal Constraint~\eqref{EqLiqConstraint} can be replaced by any other real number; this corresponds to a simple shift of the solution of the Optimization Problem~\eqref{EqValueFctAdv}.}
\end{equation}
\end{definition}

We readily observe that Definition~\ref{DefAdmStr} is satisfied if $\xi(t)\equiv \frac{x}{T}$ and $\eta(t)=0$ for all $t \in [0,T)$; hence $\mathbb{A}(T,x)\not= \emptyset$. The goal of this paper is to solve minimization problem
\begin{equation} \label{EqValueFctAdv}
v(T,x):=\inf\limits_{u \in \mathbb{A}(T,x)}  J(T,x,u). \tag{OPT}
\end{equation}

The key step towards the solution of the Optimization Problem~\eqref{EqValueFctAdv} is the derivation of a candidate 
$
w:(0,\infty) \times \mathds{R} \longrightarrow \mathds{R}_+
$
for the value function. Heuristic considerations suggest that $w$ should satisfy
the following HJB equation:

\begin{equation} \label{EqHJBAdv} \tag{$\text{HJB}$}
\begin{split}
\frac{\partial w}{\partial T} (T,x)& =\inf\limits_{ u=(\xi,\eta) \in \mathds{R} \times \mathds{R} }
	\Big[ \theta \big( w(T,x-\eta)-w(T, x) \big) - \frac{\partial w}{\partial x} (T,x) \xi + f(\xi,\eta ,x)\Big]
	\\
\lim\limits_{T\rightarrow 0+} w(T,x)&=\begin{cases}
	0  &\text{if } x=0 \\
	\infty &\text{else.} 
\end{cases}
\end{split}
\end{equation}

The singularity of the HJB equation is due to the Terminal Constraint~\eqref{EqLiqConstraint}; it renders the solution of the optimization problem non-standard. It turns out that we require the speed of convergence of $X^u(t)$ as $t \rightarrow T$ in order to conduct a verification argument using the HJB Equation~\eqref{EqHJBAdv}. However, the Optimization Problem~\eqref{EqValueFctAdv} is not linear-quadratic because of the cost term $\gamma |\eta|$; a closed form candidate for the solution of the problem (which turns into a qualitative bound for the speed of convergence of $X^u(t)$) is not readily available. In order to overcome this problem, we construct a candidate for the solution of~\eqref{EqValueFctAdv} in closed form; to this end, involved heuristic considerations which are based on the trade-off of the quadratic costs $\lambda \xi^2$ and the absolute value costs $\gamma |\eta|$ are required. We can reduce the complexity of the problem from solving the PDE~\eqref{EqHJBAdv} to solving a system of ODEs whose solution is explicitly available.

\section{Heuristic derivation of the value function} \label{SecAdvHeuristic}

Let us for now assume that $x>0$ and $\alpha  >0$. The results are symmetric and can directly be transferred to the case $x<0$; the case $\alpha  =0$ has a somewhat simpler structure and can be obtained by straightforward modifications or by taking the limit $\alpha  \rightarrow 0$ (cf.~Section~\ref{SubSecAdvPropertiesNoRisk}).

Given the cost term $\gamma |\eta|$, we can not expect the value function to be quadratic (or at least a quadratic polynomial). However, it should in sense be similar to a quadratic polynomial as we elaborate below.\footnote{This observation is also consistent with discrete-time constrained optimization problems with absolute value cost terms in addition to quadratic costs terms, see~\cite{Cui2012} and~\cite{Kratz2010} where the value function is piecewise a quadratic polynomial.}

We first recall the case $\gamma=0$ from~\cite{Kratz2012}. The value function $v$ and the optimal control $u^*_T=(\xi^*_T,\eta^*_T)\in \mathds{A}(T,x)$ are given by
\begin{equation} \label{EqOptNODP}
v(T,x)=C(T) x^2, \quad  \xi_T^*(t) = \frac{C(T-t)}{\lambda} X_T^*(t) , \quad \eta_T^*(t) = X_T^*(t-),
\end{equation}
where
\begin{equation}  \label{Eq1dimesionalvaluefct} 
C(T)
=\frac{\lambda\tilde{\theta}}{2} \coth \Big(\frac{\tilde{\theta} T}{2}  \Big) -\frac{\lambda \theta}{2}
\quad \text{for} \quad
\tilde{\theta}:=\sqrt{\theta^2+\tfrac{4 \alpha }{\lambda}}
\end{equation}
and $X^*=X^{u^*_T}$ is the controlled process. Hence the optimal control is of Markovian form; we will later see that this property is preserved for $\gamma>0$. The special case $\theta=0$ is the solution of a standard variational problem (see, e.g., \cite{Almgren2001}); in this case, the solution is given by
\begin{equation}\label{EqValFuncAdvBelow}
v(T,x)=C_0(T) x^2, \quad  \xi_T^*(t) = \frac{C_0(T-t)}{\lambda} X_T^*(t) \quad \text{for} \quad
C_0(T)
=\sqrt{\lambda \alpha} \coth \Big(\sqrt{\frac{\alpha}{\lambda}} T \Big) .
\end{equation}

We make the following heuristic observations about the structure of the value function and the optimal control which are partly inspired by the discrete time case.

\begin{itemize}
\item
For small initial states $x$, the absolute value costs $\gamma |\eta|$ outweigh the quadratic costs $\lambda \xi^2$. Hence, $\eta$ should be zero and the value function should satisfy
$v(T,x) = C_0(T) x^2$
(cf.~Equation~\eqref{EqValFuncAdvBelow}). Also the optimal absolute continuous control should be given as in~\eqref{EqValFuncAdvBelow}.
\item
It turns out that for large initial states $x$, the absolute value costs $\gamma |\eta|$ are more or less neglectable compared to the quadratic costs $\lambda \xi^2$. For that reason, the value function turns out to be a quadratic polynomial if $x$ is large enough.
\item
For intermediate states, we ``interpolate'' the value function in such a way that it is continuously differentiable.
\end{itemize}

We establish an interpolation method accomplishing this task: a candidate of the value function is identified in closed form and we call its representation as a ``polynomial'' with state-dependent coefficients a \emph{quasi-polynomial} in the following.

In Section~\ref{SecAdvHeuristicValuefunction}, we make the ansatz that the value function is a quasi-polynomial of degree two; we deduce a candidate for the optimal control (dependent on the coefficients of this quasi-polynomial) using the HJB Equation~\eqref{EqHJBAdv} and obtain a candidate for the boundary below which $\eta=0$ is optimal. In Section~\ref{SecAdvHeuristicDiff}, we derive a system ordinary differential equations for the coefficients of the candidate value function; the solution of this system is explicitly known.

\subsection{Heuristic derivation of the structure of the value function and the optimal strategy} \label{SecAdvHeuristicValuefunction}

We make the ansatz that a candidate $w$ for the value function
is a quasi-polynomial of degree two:
\begin{equation} \label{EqValueFuncPol1}
w(T,x)=\bar{C}_1(T,x) x^2+ \bar{C}_2(T,x) |x| + \bar{C}_3(T,x)
\end{equation}
for functions 
$\bar{C}_i:(0,\infty) \times \mathds{R} \rightarrow \mathds{R}$, $i=1,2,3$ (cf.~the discussion above). Obviously, the value function of every optimization problem can be written in this form. In the present case, this notation turns out to be helpful as it emphasizes the connection with linear-quadratic control problems. Instead of characterizing a candidate of the value function as a solution of the PDE~\eqref{EqHJBAdv}, the notation enables us to characterize it by a coupled system of ODEs for the coefficients $C_i$.

In the following we reflect on the optimal control. These considerations yield
possible properties of the coefficients of $w$ as in Equation~\eqref{EqValueFuncPol1}.
Assume that $u_T^*=(\xi_T^*,\eta_T^*)\in \mathbb{A}(T,x)$ is the optimal control.
For small states $x$, the absolute value costs $\gamma |\eta|$ are larger than the quadratic costs $\lambda \xi^2$. Therefore, there should be a boundary $\beta:(0,\infty)\rightarrow \mathds{R}_+$ such that 
\begin{equation} \label{EqAdvEta*Below1}
\eta^*_T(0):=\eta^*(T,x)=0 \quad \text{for } |x| \leq \beta(T).
\end{equation}
For larger states, the absolute value costs are smaller than the quadratic costs; therefore we expect
that the control $\eta^*_T$ is such that a jump of $\pi$ decreases the controlled process to the level $\beta$ where
further controls $\eta>0$ are too costly:
\begin{equation} \label{EqAdvHeuristicetaboundary}
\eta^*_T(0)=\eta^*(T,x) =
\sgn(x) \big(|x|-\beta(T)\big) \quad \text{for } |x| > \beta(T).
\end{equation}
As $\eta^*(T,x)=0$ for $|x| \leq \beta(T)$, the optimal absolutely continuous control (and the value function) below the boundary
should be the one given by Equation~\eqref{EqValFuncAdvBelow}. In other words, we expect the coefficients of $w$ as in Equation~\eqref{EqValueFuncPol1} 
to fulfill
\[
\bar{C}_1(T,x) =C_0(T), \quad \bar{C}_2(T,x)=\bar{C}_3(T,x)=0 \quad \text{for } T>0\text{ and } |x| \leq \beta(T).
\]
We illustrate these heuristics in the left and the middle picture of Figure~\ref{FigHeu}. 

Let us now assume that these considerations are true. We hope that the value function is differentiable
on $(0,\infty) \times \mathds{R}$. We use this property for the proof of
the verification theorem 
via the HJB Equation~\eqref{EqHJBAdv}
(cf.~Theorem~\ref{TheoremAdvOptLiq} below). 
A necessary condition for continuity of $w$
at $x=\beta(T)$ is
\[
\bar{C}_1(T,\beta(T)) =C_0(T),  \quad \bar{C}_2(T,\beta(T))= \bar{C}_3(T,\beta(T))=0.
\]
In order to be differentiable at $x=\beta(T)$, the right-hand and the left-hand partial derivative
with respect to $x$  
must be equal, i.e., for $T>0$, $x=\beta(T)$,
\begin{equation} \notag
2C_0(T) x = 2\bar{C}_1(T,x) x+ \bar{C}_2(T,x) 
+ x^2 \frac{\partial\bar{C}_1}{\partial{x}}(T,x) + x\frac{\partial\bar{C}_2}{\partial{x}}(T,x) 
+ \frac{\partial\bar{C}_3}{\partial{x}}(T,x),
\end{equation}
and therefore a necessary condition for differentiability is 
\begin{equation*}
\beta(T)^2 \frac{\partial\bar{C}_1}{\partial{x}}(T,\beta(T)) + \beta(T)\frac{\partial\bar{C}_2}{\partial{x}}(T,\beta(T)) 
+ \frac{\partial\bar{C}_3}{\partial{x}}(T,\beta(T))=0.
\end{equation*}

We make the educated guess that this condition also holds for $x \not= \beta(t)$ (note that it certainly holds for small $x < \beta(t)$ and large $x$ as in these cases the coefficients are constant in $x$):
\begin{equation} \label{EqAdvMainLemmaDiffEq}
x^2 \frac{\partial\bar{C}_1}{\partial{x}}(T,x) + |x| \frac{\partial\bar{C}_2}{\partial{x}}(T,x) 
+ \frac{\partial\bar{C}_3}{\partial{x}}(T,x) =0
\end{equation}
for $T>0$, $x \in \mathds{R}$ and thus
\begin{equation} \label{EqAdvAbleitungHeuristic}
\frac{\partial w}{\partial{x}}(T,x) = 2 \bar{C}_1(T,x) x +\sgn(x) \bar{C}_2(T,x).
\end{equation}

\begin{figure}
\centering
\begin{tabular}{lll}
\begin{tabular}{l}
\begin{overpic}[height=2.7cm, width=4.5cm]{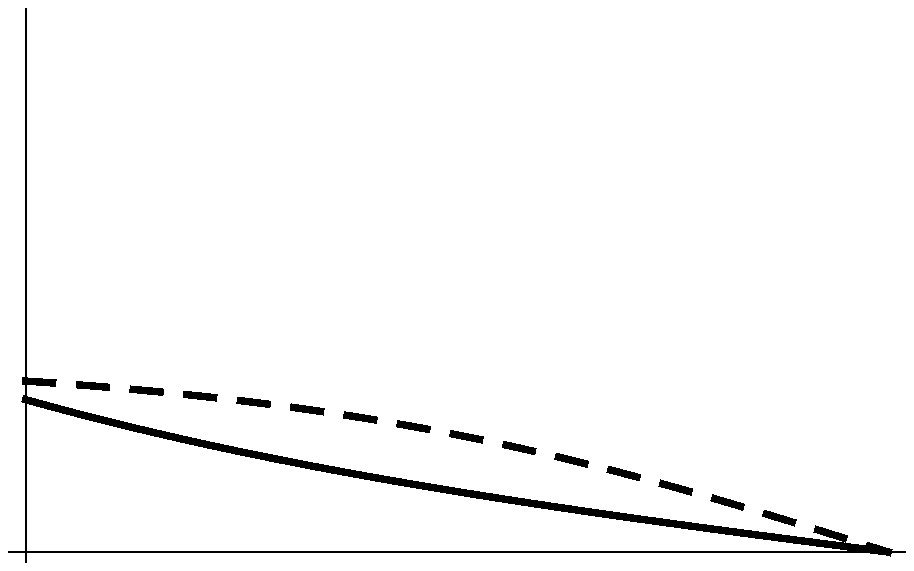}
\put(93,-3.5){\scriptsize{$T$}}
\put(5,23){\scriptsize{$\beta(T\!-\!\cdot)$}}
\put(0,60){\scriptsize{State}}
\end{overpic}
\end{tabular} &
\begin{tabular}{l}
\begin{overpic}[height=2.7cm, width=4.5cm]{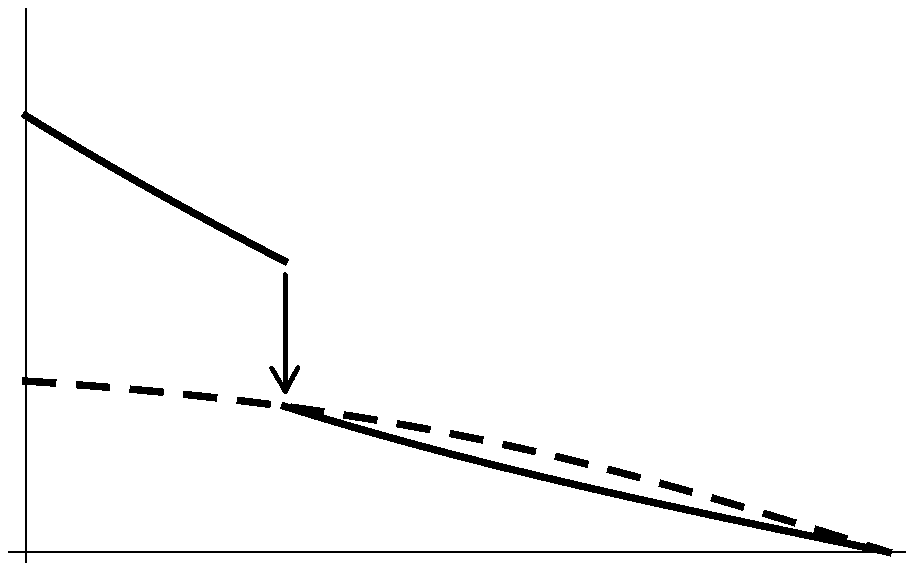}
\put(93,-3.5){\scriptsize{$T$}}
\put(5,12){\scriptsize{$\beta(\!T-\!\cdot)$}}
\put(29,-2.5){\scriptsize{$\tau$}}
\put(0,60){\scriptsize{State}}
\end{overpic}
\end{tabular} &
\begin{tabular}{l}
\begin{overpic}[height=2.7cm, width=4.5cm]{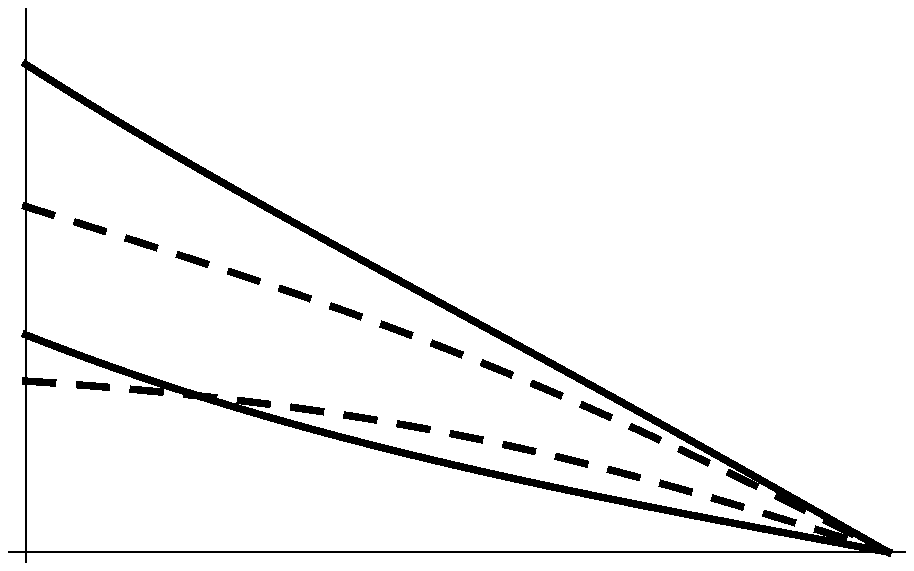}
\put(93,-3.5){\scriptsize{$T$}}
\put(5,11){\scriptsize{$\beta(T\!-\!\cdot)$}}
\put(5,25){\scriptsize{$\bar{X}(T\!-\!\cdot,0)$}}
\put(20,-2.5){\scriptsize{$s$}}
\put(0,60){\scriptsize{State}}
\end{overpic}
\end{tabular}
\end{tabular}\caption{Illustrations of the heuristics about the optimal control. In all pictures, the (lower) dashed line denotes the time dependent boundary $\beta(T-\cdot)$ below which $\eta^*_T=0$. Solid lines denote realized trajectories of the controlled process. The left picture shows the scenario where the initial state is below $\beta$. The middle picture shows a state which is initially above $\beta$. In the displayed scenario, the Poisson process jumps at time $\tau$. The third picture shows the optimal control for two states when the Poisson process never jumps. For the smaller initial state, the controlled process crosses the boundary $\beta(T-\cdot)$ at time $s$, i.e., for $t\leq s$ the smaller solid line represents the function $\bar{X}(T-\cdot,T-s)$. The larger trajectory does not cross $\beta$ during the entire horizon; it is above the function $\bar{X}(T-\cdot,0)$ which is displayed by the second dashed line.
} \label{FigHeu}
\end{figure}

We close the section by finding a candidate for the optimal absolute continuous control and a formula for $\beta$ (dependent on the coefficients of $w$). We assume from now on that the value function is given by $w$ as in Equation~\eqref{EqValueFuncPol1}, that all partial derivatives exist, that $\frac{\partial w}{\partial x}$ is given as in Equation~\eqref{EqAdvAbleitungHeuristic} and that the optimal jump control $\eta^*$ is given by Equations~\eqref{EqAdvEta*Below1} and~\eqref{EqAdvHeuristicetaboundary}.

We consider the HJB Equation~\eqref{EqHJBAdv}; for $T>0$ and $x \in \mathds{R}$, we minimize the function
\begin{align*}
h(T,x,\xi , \eta) &:=   \theta \big( w(T,x- \eta)-w(T, x) \big) - \frac{\partial w}{\partial x} (T,x) \xi 
	+\lambda \xi^2 + \gamma |\eta| + \alpha x^2 \\
&=   \theta w(T,x-\eta) + \gamma |\eta| - \theta  w(T, x)  + \alpha x^2 + \lambda \Big( \frac{\frac{\partial w}{\partial x} (T,x)}{2\lambda}- \xi \Big)^2 -
	\frac{\frac{\partial w}{\partial x} (T,x)^2}{4\lambda}
\end{align*}
in $(\xi,\eta)$.
Using Equation~\eqref{EqAdvAbleitungHeuristic}, we obtain that $h$ is minimal for
\begin{equation} \label{EqAdvHeurxi*}
\xi^*:=\xi^*_T(0):=\xi^*(T,x):=\frac{2\bar{C}_1(T,x) x + \sgn(x) \bar{C}_2(T,x)}{ 2\lambda}
\end{equation}
and for $\eta^*$ such that 
$
\bar{h}(\eta ) := \theta w(T, x- \eta ) + \gamma |\eta |
$
is minimal.
For $x > \beta(T)$, Equation~\eqref{EqAdvHeuristicetaboundary} suggests that this
should be the case for
\begin{equation} \label{EqAdvHeurEta1}
\eta^*:=\eta_T(0) = \eta^*(T,x)= x-\beta(T).
\end{equation}
Furthermore for $\eta^*>0$, 
$
\bar{h}'(\eta^*)
	=- \theta \frac{\partial w}{\partial x}(T,x-\eta^*) +\gamma =0
$
if and only if
$
\frac{\partial w}{\partial x}(T,x-\eta^*) =\frac{\gamma}{\theta}.
$
Combining these observations, we obtain the
following candidate for the boundary $\beta(T)$:
\begin{equation} \label{EqCandidateb}
\beta(T)=\frac{\gamma}{2 \theta C_0(T)}.
\end{equation}
Equations~\eqref{EqAdvHeurxi*}, \eqref{EqAdvHeurEta1} and~\eqref{EqCandidateb} suggest that the optimal control $u^*_T=(\xi^*_T,\eta^*_T)$ is of Markovian form with
\begin{align}
\xi^*_T(t)&:=\xi^*(T-t,y)=\frac{2\bar{C}_1(T-t,y) y + \sgn(y) \bar{C}_2(T-t,y)}{ 2\lambda}  \label{EqAdvHeurxi*1}\\
\eta^*_T(t)&:= \eta^*(T-t,y)= \begin{cases} 
\sgn(y) \big(|y|-\frac{\gamma}{2 \theta C_0(T-t)}\big) &\text{ if } y > \frac{\gamma}{2 \theta C_0(T-t)} \\
0 &\text{ else},
\end{cases} \label{EqAdvHeureta*1}
\end{align}
where $y$ is the state of the controlled process at time $t-$.

\subsection{Ordinary differential equations for the coefficients} \label{SecAdvHeuristicDiff}

As indicated before, we have written the candidate of the value function $w$ as a quasi-polynomial in order to reduce the problem of solving a PDE for $w$ to solving ODEs for the $C_i$'s.

For now, we consider the case $x>\beta(T)$. Given the assumption that $w$ as in Equation~\eqref{EqValueFuncPol1}
solves the HJB Equation~\eqref{EqHJBAdv} with minimizer $(\xi^*,\eta^*)$ for $\xi^*$ and $\eta^*$
as in Equations~\eqref{EqAdvHeurxi*} and \eqref{EqAdvHeurEta1}, respectively, we obtain
\[
\frac{\partial w}{\partial T} (T,x) = h(T,x,\xi^*,\eta^*). 
\]
Provided that Equations~\eqref{EqAdvMainLemmaDiffEq} and~\eqref{EqCandidateb} hold, this implies
\begin{align}
&\frac{\partial \bar{C}_1}{\partial T}(T,x) x^2
	+\frac{\partial \bar{C}_2}{\partial T}(T,x) x+\frac{\partial \bar{C}_3}{\partial T} (T,x) \notag \\
&\qquad =\Big(\alpha- \frac{\bar{C}_1(T,x)^2}{\lambda} - \theta  \bar{C}_1(T,x) \Big) x^2
  	+ \Big( \gamma -\bar{C}_2(T,x) \Big(\frac{\bar{C}_1(T,x)}{\lambda}+\theta \Big)  \Big) x - \theta \bar{C}_3(T,x) - \frac{\gamma^2}{4 \theta C_0(T)} - \frac{\bar{C}_2(T,x)^2}{4\lambda}. \notag
\end{align}
We hence expect that for $|x|>\beta(T)$, the coefficients of the 
candidate value function fulfill the ordinary
differential equations
\begin{align} 
\frac{\partial \bar{C}_1}{\partial T}(T,x) &= 
	\alpha - \frac{\bar{C}_1(T,x)^2}{\lambda} - \theta \bar{C}_1(T,x), \label{DiffEqAdvC1alt}  \\
\frac{\partial \bar{C}_2}{\partial T}(T,x) &= \gamma
	- \bar{C}_2(T,x) \Big(\frac{\bar{C}_1(T,x)}{\lambda}+\theta \Big), \label{DiffEqAdvC2alt} \\
\frac{\partial \bar{C}_3}{\partial T} (T,x)&=
	-\theta \bar{C}_3(T,x) - \frac{\gamma^2}{4 \theta C_0(T)} - \frac{\bar{C}_2(T,x)^2}{4\lambda} \label{DiffEqAdvC3alt} 
\end{align}
and that for $x=\beta(T)$,
\begin{equation} \label{AdvDiffeqAnfangsbedalt}
\bar{C}_1(T,x)=C_0(T), \quad \bar{C}_2(T,x)=\bar{C}_3(T,x)=0.
\end{equation}

In order to transfer the Ordinary Differential Equations~\eqref{DiffEqAdvC1alt},
\eqref{DiffEqAdvC2alt}, \eqref{DiffEqAdvC3alt} and the Conditions~\eqref{AdvDiffeqAnfangsbedalt}
into initial value problems, we consider the scenario where the Poisson process never jumps (i.e., $\pi(T)=0$). Let $T>0$ and $x \geq \beta(T)$; assume further that there exists a first time $s \in [0,T)$ such that
the process controlled by the candidate optimal control $u^*_T$ (cf.~Equations~\eqref{EqAdvHeurxi*1} and~\eqref{EqAdvHeureta*1}), $X^*_T$, crosses the boundary $\beta$, i.e., 
\[
X^*_T(s)=\beta(T-s).
\]
We define
\begin{equation*}
\bar{X}(T,S):=x, \quad \text{where } S:=T-s;
\end{equation*}
in other words, the \emph{deterministic} function $\bar{X}(T-\cdot,S):[0,s]=[0,T-S]\rightarrow \mathds{R}_+$ satisfies 
\begin{equation} \label{EqbarXandX*}
\bar{X}(T-t,S) =X^*_T(t) \quad \text{for } t\in [0,T-S] \text{ if } \pi(T-S)=0.
\end{equation} Note also that this implies (cf.~Equation~\eqref{EqCandidateb})
\begin{equation*}
\bar{X}(T,T)=\beta(T)=\frac{\gamma}{2 \theta C_0(T)}.
\end{equation*}
We illustrate the functions $\bar{X}$ in the third picture of Figure~\ref{FigHeu}.

We now modify the notation for the coefficients of $w$ in the following way. Let $T>0$, $x\geq \beta(T)$ and $S$ as before. We write
\begin{equation} \notag
C_1(T,S):=\bar{C}_1(T,x), \quad C_2(T,S):=\bar{C}_2(T,x), \quad C_3(T,S):=\bar{C}_3(T,x).
\end{equation}
Thus we expect that for fixed $S>0$, the new coefficients $C_1,C_2$ and $C_3$ solve the following initial value problems on $[S,\infty)$:
\begin{align} 
\frac{\partial C_1}{\partial T}(T,S) &= 
	\alpha- \frac{C_1(T,S)^2}{\lambda}- \theta C_1(T,S) , \quad 
C_1(S,S)=C_0(S), \label{DiffEqAdvC1}\\
\frac{\partial C_2}{\partial T}(T,S) &= 
	\gamma - C_2(T,S) \Big(\frac{C_1(T,S)}{\lambda}+\theta \Big) 
	,\quad 
C_2(S,S)=0,  \label{DiffEqAdvC2}  \\
\frac{\partial C_3}{\partial T} (T,S)&=
	-\theta C_3(T,S) - \frac{\gamma^2}{4 \theta C_0(T)} 
	- \frac{C_2(T,S)^2}{4\lambda}, \quad
C_3(S,S)=0; \label{DiffEqAdvC3}
\end{align}
furthermore, we expect $\bar{X}$ to solve the initial value problem
(cf.~Equations~\eqref{EqCSDE}, \eqref{EqAdvHeurxi*} and~\eqref{EqbarXandX*})
\begin{equation}\label{DiffEqAdvXbar}
\frac{\partial \bar{X}}{\partial T}(T,S) = \xi^*(T,\bar{X}(T,S))=
	\frac{C_1(T,S)}{\lambda}\bar{X}(T,S)+ \frac{C_2(T,S)}{2\lambda},\quad 
\bar{X}(S,S)=\frac{\gamma}{2\theta C_0(S)}.
\end{equation}

\section{The candidate value function} \label{SecPropValAdv}

Following the heuristic considerations of Section~\ref{SecAdvHeuristic}, we define the candidate value function via $w$ by
\begin{equation} \label{EqAdvCandidateValueFct}
w(T,x):=
C_1(T,g(T,x))x^2 + C_2(T,g(T,x)) |x| + C_3(T,g(t,x)),
\end{equation}
where
$C_1$, $C_2$ and $C_3$ are the solutions of the Initial Value Problems~\eqref{DiffEqAdvC1}, 
\eqref{DiffEqAdvC2} and \eqref{DiffEqAdvC3}, respectively; the function 
$g :(0,\infty)\times \mathds{R} \rightarrow [0,T]$ is given explicitly for 
$|x| \in [0, \frac{\gamma}{2 \theta C_0(T)}] \cup [\bar{X}(T,0),\infty)$ by 
\begin{equation} \label{EqAdvDefg1}
g(T,x) :=
\begin{cases}
T &\text{if } |x| \in [0, \frac{\gamma}{2 \theta C_0(T)}]\\
0 &\text{if } |x| \in [\bar{X}(T,0),\infty),
\end{cases} 
\end{equation}
and implicitly for $|x|  \in (\frac{\gamma}{2\theta C_0(T)},\bar{X}(T,0))$ by 
\begin{equation} \label{EqAdvDefg2}
\bar{X}(T,g(T,x)) = |x|,
\end{equation}
where $\bar{X}$ is the solution of the Initial Value Problem~\eqref{DiffEqAdvXbar},
i.e., $g(T,\cdot)$ is the inverse function of $\bar{X}(T,\cdot)$.
The fact that such an inverse function exists
(and that hence the function $w$ defined by Equation~\eqref{EqAdvCandidateValueFct}
is well-defined) is verified in Section~\ref{SubsecAdvExplicit} below. 

The remainder of the section is structured as follows. In Section~\ref{SubsecAdvExplicit}, we 
compute the solutions of the Initial Value 
Problems~\eqref{DiffEqAdvC1}, \eqref{DiffEqAdvC2}, \eqref{DiffEqAdvC3} and
\eqref{DiffEqAdvXbar} in closed form. This enables us to prove that $w$ is well-defined. 
In Section~\ref{SubSectAdvDiff}, we prove that
$w$ is continuously differentiable and strictly convex in $x$. Finally, we show that $w$ is a classical solution of
the HJB Equation~\eqref{EqHJBAdv} in Section~\ref{SubSectAdvHJB}.

In Sections~\ref{SecPropValAdv} and~\ref{SecVerificationAdv}, we assume that
$\alpha >0$. The significantly simpler case $\alpha =0$ is in particular discussed in Section~\ref{SubSecAdvPropertiesNoRisk}.

\subsection{Closed form solutions for the coefficients and the trading trajectory} \label{SubsecAdvExplicit}

In order that the candidate value function is well-defined, the function $g$ must be well-defined. Therefore,
we require that $\bar{X}$ is strictly monotone in $S$ (cf.~Equation~\eqref{EqAdvDefg2}). We
prove this by directly computing the partial derivative of $\bar{X}$ with respect to $S$
and show that it is strictly negative. 

We start by giving closed form solutions for the value function coefficients $C_1$, $C_2$, $C_3$ and for $\bar{X}$, 
which follow from the Initial Value Problems~\eqref{DiffEqAdvC1}, \eqref{DiffEqAdvC2}, \eqref{DiffEqAdvC3} and 
\eqref{DiffEqAdvXbar}, respectively. Proposition~\ref{PropAdvExplicitC} treats the case $S \in (0,T)$, the case $S=0$ is treated separately afterwards.

\begin{prop} \label{PropAdvExplicitC}
Let $T>0$ and $S \in (0,T)$ and define
\begin{equation}\label{EqAdvKappas}
\mu(S):=\frac{2C_0(S)+ \theta \lambda}{\tilde{\theta} \lambda}, \quad
\kappa(S):= \arcoth(\mu(S));
\end{equation}
recall that
$\tilde{\theta}=\sqrt{\theta^2+\tfrac{4 \alpha }{\lambda}}
$.
Then
\begin{align}
C_1(T,S)&= \frac{\lambda \tilde{\theta}}{2} \coth \big(\frac{\tilde{\theta}}{2}(T-S) 
	+\kappa(S)\big) - \frac{\lambda \theta}{2} >0, \label{EqAdvC1} \\
C_2(T,S)&= \frac{ \gamma}{2 \alpha} 
	\Big( \!-\theta \lambda +  \frac{ \lambda
	\tilde{\theta} \big( \sinh \big(\frac{\tilde{\theta}}{2}(T\! -\! S)\big) 
	 +  \mu(S) \cosh\big(\frac{\tilde{\theta}}{2}(T\! -\! S)\big)\big)
	 -  2 C_0(S) \exp \big( \frac{\theta}{2}(S\!-\!T)\big)}
	{ \mu(S) \sinh\big(\frac{\tilde{\theta}}{2}(T\! -\! S) \big)
	+ \cosh\big(\frac{\tilde{\theta}}{2}(T \!-\! S) \big)} \Big) \label{EqAdvC2} >0,  \\
C_3(T,S) &= - \int_S^T \exp \big( \theta (r-S) \big) 
	\Big(\frac{\gamma^2}{4\theta C_0(T+S-r)} + \frac{C_2(T+S-r,S)^2}{4 \lambda} \Big) dr
<0, \label{EqAdvC3}\\
\bar{X}(T,S)&= \Big(\frac{\gamma \mu(S)}{2 \theta C_0(S)} 
	+\frac{\theta \gamma}{2 \tilde{\theta} \alpha} \Big)
	\sinh\big(\frac{\tilde{\theta}}{2}(T-S) \big)\exp\big(\frac{\theta}{2}(S-T) \big) \notag \\*
& \qquad + \Big(\frac{\gamma }{2 \theta C_0(S)} 
	+\frac{\gamma}{2 \alpha} \Big)
	\cosh\big(\frac{\tilde{\theta}}{2}(T-S) \big)\exp\big(\frac{\theta}{2}(S-T) \big) 
	- \frac{\gamma}{2 \alpha}.  \label{EqAdvXbar}
\end{align}
\end{prop}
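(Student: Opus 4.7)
The plan is to solve the four initial value problems sequentially, exploiting the triangular structure: \eqref{DiffEqAdvC1} is autonomous in $C_1$; \eqref{DiffEqAdvC2} is linear in $C_2$ once $C_1$ is known; \eqref{DiffEqAdvXbar} is linear in $\bar{X}$ with coefficients built from $C_1$ and $C_2$; and \eqref{DiffEqAdvC3} is linear in $C_3$ driven by $C_2$. Each step uses only standard ODE machinery (a Riccati substitution followed by three applications of variation of constants), so the task is essentially to reduce the resulting integrals to the compact hyperbolic form stated.

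The Riccati equation $\partial_T C_1 = \alpha - C_1^2/\lambda - \theta C_1$ is linearised by the ansatz $C_1 = \tfrac{\lambda\tilde\theta}{2}\coth(z) - \tfrac{\lambda\theta}{2}$. A short computation using $\tilde\theta^2 = \theta^2 + 4\alpha/\lambda$ and $1 - \coth^2 = -1/\sinh^2$ shows that $\dot z = \tilde\theta/2$, so $z(T) = \tfrac{\tilde\theta}{2}(T - S) + \kappa(S)$. The initial condition $C_1(S, S) = C_0(S)$ then forces $\coth(\kappa(S)) = \mu(S)$; a direct estimate (using $C_0(S) \geq \sqrt{\lambda\alpha}$ and $(\tilde\theta - \theta)\lambda = 4\alpha/(\tilde\theta + \theta)$) shows $\mu(S) > 1$, so $\kappa(S) = \arcoth \mu(S)$ is well defined and positive. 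This yields \eqref{EqAdvC1}; positivity of $C_1$ follows from $\coth > 1$ together with $\tilde\theta > \theta$.

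For $C_2$ I would use the integrating factor $M(T) := \exp\bigl(\int_S^T (C_1(r, S)/\lambda + \theta)\,dr\bigr)$. The explicit form of $C_1$ makes the integral a closed-form combination of $\log\sinh$ and a linear term, so $M(T) = \sinh\bigl(\tfrac{\tilde\theta}{2}(T-S) + \kappa(S)\bigr)\exp\bigl(\tfrac{\theta}{2}(T-S)\bigr)/\sinh(\kappa(S))$. Variation of constants gives $C_2(T,S) = \gamma\int_S^T M(r)/M(T)\,dr$; expanding the hyperbolic sum via $\cosh(\kappa(S))/\sinh(\kappa(S)) = \mu(S)$ and applying the elementary antiderivatives of $e^{a\tau}\sinh(b\tau)$ and $e^{a\tau}\cosh(b\tau)$ collects the terms into \eqref{EqAdvC2}; positivity of $C_2$ is immediate from the positivity of the integrand. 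For $C_3$ the integrating factor $e^{\theta T}$ produces a single-integral representation, and the change of variable $r \mapsto T + S - r$ rewrites it in the stated form \eqref{EqAdvC3}; negativity is visible from the sign of the integrand.

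Finally, $\bar{X}$ satisfies a linear ODE whose integrating factor $\exp(-\int_S^T C_1(r,S)/\lambda\,dr)$ can be read off from the computation for $M$. Rather than pushing the variation-of-constants integral through by hand, I would verify \eqref{EqAdvXbar} by direct substitution: differentiating the displayed combination of $\sinh$, $\cosh$, and $\exp$ and matching with $(C_1/\lambda)\bar{X} + C_2/(2\lambda)$ via \eqref{EqAdvC1} and \eqref{EqAdvC2} is mechanical, as is checking the initial condition at $T = S$. The only real obstacle is the algebraic bookkeeping: keeping track of the hyperbolic addition formulae and the constants $\mu(S)$, $\kappa(S)$, $C_0(S)$ carefully enough that the integrals for $C_2$ and $\bar{X}$ collapse to the displayed compact forms.
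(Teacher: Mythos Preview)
Your proposal is correct and follows essentially the same route as the paper: the paper's proof simply observes that \eqref{DiffEqAdvC1} is a constant-coefficient Riccati equation and that \eqref{DiffEqAdvC2}, \eqref{DiffEqAdvC3}, \eqref{DiffEqAdvXbar} are inhomogeneous linear ODEs solvable by variation of constants, then defers the ``elementary but tedious algebraic manipulations'' to a reference. Your outline fills in exactly those details (integrating factors, the change of variable for $C_3$, and direct verification for $\bar X$), and your check that $\mu(S)>1$ matches the paper's first paragraph.
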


\begin{proof}
Note first that for $S\in (0,T)$,
$
C_0(S) > \sqrt{\alpha \lambda}
$
by Equation~\eqref{EqValFuncAdvBelow}. Therefore, 
\begin{equation*}
\lambda \tilde{\theta} = \sqrt{ \lambda^2 \theta^2 + 4 \alpha \lambda}
	< \lambda \theta +2\sqrt{\alpha \lambda} \leq \lambda \theta + 2 C_0(S);
\end{equation*}
thus
$
\mu(S) >1
$
and $\kappa(S)$ as in Equation~\eqref{EqAdvKappas} is well-defined for all $S \in (0,T]$.

Equation~\eqref{EqAdvC1} follows directly as~\eqref{DiffEqAdvC1} is an initial value problem for a Riccati equation with constant coefficients. \eqref{DiffEqAdvC2}, \eqref{DiffEqAdvC3} and~\eqref{DiffEqAdvXbar} are initial value problems for inhomogeneous linear differential equations whose solutions are readily computed (note that $\frac{1}{C_0(\cdot)}$ is continuous). Elementary but tedious algebraic manipulations yield Equations~\eqref{EqAdvC2}, \eqref{EqAdvC3} and~\eqref{EqAdvXbar}.\footnote{Detailed proofs for these and later algebraic manipulations can be found in~\cite{Kratz2011}. \label{FootnoteManip}} 
\end{proof}

As $C_0(0)$ is not defined and $\lim_{S\rightarrow 0+} C_0(S)=\infty$, $C_1(T,S)$, $C_2(T,S)$, $C_3(T,S)$ and $\bar{X}(T,S)$ ($T >0$) as in Equations~\eqref{EqAdvC1}~-~\eqref{EqAdvXbar} are not defined for $S=0$ . However, we require these objects for the
definition
of the candidate value function. We define $C_i(T,0):= \lim_{S\rightarrow 0+} C_i(T,S)$ and $\bar{X}(T,0):=\lim_{S\rightarrow 0+} \bar{X}(T,S)$.
Elementary considerations verify that these limits exist and are given by
\begin{align}
C_1(T,0)&=
	\frac{\lambda \tilde{\theta}}{2} \coth \big(\frac{\tilde{\theta}}{2}T\big) 
	- \frac{\lambda \theta}{2} >0,	\label{EqAdvC1T}  \\
C_2(T,0)& =\frac{ \gamma \lambda}{2 \alpha} 
	\Big( \tilde{\theta} \coth \big(\frac{\tilde{\theta}}{2}T  \big) 
	-\tilde{\theta} 
	\frac{\exp \big(- \frac{\theta}{2} T\big)}
	{\sinh\big(\frac{\tilde{\theta}}{2} T  \big)}
	-\theta \Big)  \label{EqAdvC2T} >0,  \\
C_3(T,0)&=- \int_0^T \exp \big( -\theta r \big) 
	\Big(\frac{\gamma^2}{4 \theta C_0(T-r)} + \frac{C_2(T-r,0)^2}{4 \lambda} \Big)  dr <0, 
	\label{EqAdvC3T}  \\
\bar{X} (T,0)&= \Big(\frac{\gamma}{\tilde{\theta}\theta \lambda} 
	+ \frac{ \theta \gamma}{2 \tilde{\theta} \alpha} \Big)
	\sinh \big(\frac{\tilde{\theta}}{2}T  \big)
	\exp\big(-\frac{\theta}{2}  T \big) \! + \!\frac{\gamma}{2 \alpha}
	\cosh \big(\frac{\tilde{\theta}}{2}T \big) \exp\big(-\frac{\theta}{2}  T \big)
	-\frac{\gamma}{ 2 \alpha}. \label{EqAdvXbarT}
\end{align}

It follows immediately from the Formulae~\eqref{EqAdvC1} - \eqref{EqAdvXbar} that both the coefficients of the candidate value function and the trajectories $\bar{X}$ are continuously differentiable, even analytic.

\begin{cor} \label{CorAdvContDiff2Times}
Let $T>0$ and $S\in (0,T)$. Then $C_1$, $C_2$, $C_3$ and $\bar{X}$ have continuous partial derivatives
of any order in $(T,S)$.
\end{cor}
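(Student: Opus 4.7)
The plan is to read off smoothness directly from the explicit formulae \eqref{EqAdvC1}--\eqref{EqAdvXbar}, handling $C_3$ separately via differentiation under the integral sign. The key is to check that the building blocks are smooth and that no denominators or arguments of $\arcoth$ leave their domains of analyticity on the region $\{(T,S):T>0,\ S\in(0,T)\}$.

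First, I would observe that the function $C_0(S)=\sqrt{\lambda\alpha}\coth(\sqrt{\alpha/\lambda}\,S)$ is real-analytic on $(0,\infty)$, since $\sinh(\sqrt{\alpha/\lambda}\,S)$ does not vanish there, and moreover $C_0(S)>\sqrt{\alpha\lambda}$. From the proof of Proposition~\ref{PropAdvExplicitC} we already know that $\mu(S)>1$ on $(0,T]$, so $\kappa(S)=\arcoth(\mu(S))$ is well-defined and $C^\infty$ (in fact analytic) in $S$, being the composition of analytic maps on their natural domains. Consequently the argument $\tfrac{\tilde\theta}{2}(T-S)+\kappa(S)$ of $\coth$ in \eqref{EqAdvC1} is strictly positive for $T>S>0$, so $C_1$ is $C^\infty$ in $(T,S)$ on the required region. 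The same reasoning applies to $\bar{X}$ in \eqref{EqAdvXbar}, which is a finite combination of $\sinh$, $\cosh$, $\exp$ applied to jointly smooth arguments, multiplied by smooth (and nowhere singular) prefactors involving $C_0(S)$.

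For $C_2$ as in \eqref{EqAdvC2}, the numerator and denominator are each built from $\sinh$, $\cosh$, $\exp$, $C_0(S)$ and $\mu(S)$, all smooth in $(T,S)$. It remains to check non-vanishing of the denominator $\mu(S)\sinh\!\bigl(\tfrac{\tilde\theta}{2}(T-S)\bigr)+\cosh\!\bigl(\tfrac{\tilde\theta}{2}(T-S)\bigr)$; but for $T>S$ both $\sinh$ and $\cosh$ terms are strictly positive, and $\mu(S)>1>0$, so the denominator is bounded away from zero. Thus $C_2 \in C^\infty$.

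The delicate case is $C_3$ because of the integral representation \eqref{EqAdvC3}. Here I would write the integrand as $H(r,T,S):=\exp(\theta(r-S))\bigl(\tfrac{\gamma^2}{4\theta C_0(T+S-r)}+\tfrac{C_2(T+S-r,S)^2}{4\lambda}\bigr)$. For $r\in[S,T]$ the argument $T+S-r$ lies in $[S,T]\subset(0,\infty)$, so by the previous steps $H$ is jointly $C^\infty$ in $(r,T,S)$ on an open neighbourhood of $\{(r,T,S):0<S\le r\le T\}$. Writing
\[
C_3(T,S)=-\int_S^T H(r,T,S)\,dr,
\]
I apply the Leibniz rule: the two endpoints $S$ and $T$ depend smoothly on $(T,S)$, and the integrand is smooth jointly, so $\partial_T$ and $\partial_S$ may be passed under the integral, producing new integrands of the same type plus boundary terms $\pm H(S,T,S)$ and $\pm H(T,T,S)$ which are again smooth in $(T,S)$. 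Iterating this argument inductively shows that $C_3$ admits continuous partial derivatives of every order. The only non-routine point to verify is thus the uniform smoothness and boundedness of the integrand on compact sub-regions (guaranteed by continuity of $C_0$ and $C_2$ away from $S=0$), after which the standard differentiation-under-the-integral theorem closes the argument.
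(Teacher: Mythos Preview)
Your proposal is correct and takes essentially the same approach as the paper, which simply remarks that the result ``follows immediately from the Formulae~\eqref{EqAdvC1}--\eqref{EqAdvXbar}'' (noting in passing that the functions are even analytic) without giving further details. Your write-up supplies exactly the routine verifications the paper omits; the treatment of $C_3$ via Leibniz's rule is more careful than anything in the paper, but not a different idea.
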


In order to show that the function $g$ given in Equations~\eqref{EqAdvDefg1} and~\eqref{EqAdvDefg2} and thus the
candidate value function is well-defined, we need monotonicity of the function $\bar{X}(t,\cdot)$. Given Equation~\eqref{EqAdvXbar}, we can directly compute the partial derivative of $\bar{X}$ with respect to the second argument and verify that $\bar{X}(T,\cdot)$ is strictly decreasing in $(0,T)$ as required. We can hence prove the central result of this section.

\begin{prop} \label{PropWellDefined}
The candidate value function $w$ given by Equation~\eqref{EqAdvCandidateValueFct} is well-defined.
\end{prop}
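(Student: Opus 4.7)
The only substantive issue is whether the implicit relation~\eqref{EqAdvDefg2} determines $g(T,x)$ uniquely for $|x|\in(\gamma/(2\theta C_0(T)),\bar{X}(T,0))$; the two explicit branches in~\eqref{EqAdvDefg1} are already unambiguous, and the three coefficients $C_1,C_2,C_3$ are unambiguously defined by Proposition~\ref{PropAdvExplicitC} together with the limiting values~\eqref{EqAdvC1T}--\eqref{EqAdvC3T}. Consequently, the plan is to show that, for every fixed $T>0$, the map
\[
S\longmapsto \bar{X}(T,S), \qquad S\in[0,T],
\]
is a homeomorphism from $[0,T]$ onto $[\bar{X}(T,T),\bar{X}(T,0)]=[\gamma/(2\theta C_0(T)),\bar{X}(T,0)]$, so that its inverse on the open interval provides the unique value of $g(T,x)$.

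First, I would assemble the analytic properties of $\bar{X}(T,\cdot)$ that are already in hand. Continuity on $(0,T]$ and differentiability to all orders follow from Corollary~\ref{CorAdvContDiff2Times} and Equation~\eqref{EqAdvXbar}; continuity at $S=0$ holds by the very definition $\bar{X}(T,0):=\lim_{S\to 0+}\bar{X}(T,S)$ recorded in~\eqref{EqAdvXbarT}. At the right endpoint, the initial condition of~\eqref{DiffEqAdvXbar} gives $\bar{X}(T,T)=\gamma/(2\theta C_0(T))$. So the two explicit branches of $g$ correspond exactly to the two boundary values of $\bar{X}(T,\cdot)$, and matching across all three regions will be automatic once invertibility on the middle region is established.

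Second, I would verify strict monotonicity by directly differentiating the closed form~\eqref{EqAdvXbar} with respect to $S$. The intended identity is
\[
\frac{\partial \bar{X}}{\partial S}(T,S)<0 \qquad \text{for all } S\in(0,T).
\]
Term by term, $\partial_S$ hits the prefactors $\mu(S)$, $1/C_0(S)$, and the arguments of $\sinh,\cosh,\exp$. The useful simplifying identities are $\tilde{\theta}^2\lambda=\lambda\theta^2+4\alpha$ and the Riccati relation $C_0'(S)=\alpha-C_0(S)^2/\lambda-\theta C_0(S)+\theta C_0(S)$ satisfied by $C_0$ (combined with $\coth'=-\operatorname{csch}^2$), together with $\mu(S)^2-1=(2C_0(S)+\theta\lambda)^2/(\tilde{\theta}\lambda)^2-1$, which after clearing denominators reduces to $4C_0(S)(C_0(S)+\theta\lambda-\alpha/C_0(S))/(\tilde{\theta}\lambda)^2$. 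I expect these identities to collapse the $\sinh$ and $\cosh$ contributions into a single factor of the form
\[
\frac{\partial \bar{X}}{\partial S}(T,S)\;=\;-K(T,S)\,\exp\!\bigl(\tfrac{\theta}{2}(S-T)\bigr),
\]
with $K(T,S)>0$ explicitly computable from $\tilde{\theta},\theta,\alpha,\lambda,\gamma$ and the hyperbolic functions of $\tfrac{\tilde{\theta}}{2}(T-S)$.

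Third, once $\partial_S\bar{X}(T,\cdot)<0$ on $(0,T)$, continuity of $\bar{X}(T,\cdot)$ on the closed interval $[0,T]$ upgrades this to strict monotonicity of the closed-interval map, and the intermediate value theorem then yields a homeomorphism onto $[\gamma/(2\theta C_0(T)),\bar{X}(T,0)]$. Its inverse is precisely the function $g(T,\cdot)$ on the middle region of~\eqref{EqAdvDefg2}, and at the two interior endpoints the limits agree with the explicit values $T$ and $0$ prescribed by~\eqref{EqAdvDefg1}. Hence $g(T,\cdot)$ is well-defined on all of $\mathds{R}$, and so is $w(T,x)$.

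The main obstacle is the bookkeeping in the differentiation step: the formula~\eqref{EqAdvXbar} mixes $\mu(S)$, $1/C_0(S)$, and the hyperbolic/exponential factors, so checking the sign of $\partial_S\bar{X}$ is a lengthy but essentially mechanical trigonometric simplification. The authors themselves defer such manipulations to~\cite{Kratz2011} (see footnote~\ref{FootnoteManip}); once the sign is settled the remaining argument is purely topological.
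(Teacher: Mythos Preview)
Your proposal is correct and follows essentially the same route as the paper: the paper also reduces well-definedness to showing that $\bar X(T,\cdot)$ is strictly decreasing on $(0,T)$, and it does this by directly differentiating the closed form~\eqref{EqAdvXbar} with respect to $S$ and checking the sign (with the algebraic details deferred, as you anticipated). Your proposal merely spells out the continuity at the endpoints and the intermediate-value/homeomorphism wrap-up more explicitly than the paper's terse proof.
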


\begin{proof}
By the discussion above, $\bar{X}(T,\cdot)$ possesses an inverse on $(0,T)$. Thus, $g$ as in 
Equations~\eqref{EqAdvDefg1} and~\eqref{EqAdvDefg2} is well-defined on $(0,\infty) \times \mathds{R}$ (cf.~also Equation~\eqref{EqAdvXbarT}). Furthermore,
$g(T,x)=g(T,|x|) \in [0,T]$ for all $T>0$, $x\in \mathds{R}$, and the functions $C_i$ ($i=1,2,3$) are well-defined for $T>0$, $S \in [0,T]$.
\end{proof}

\subsection{Differentiation and convexity of the candidate value function} \label{SubSectAdvDiff}

The goal of this section is to show that the candidate value function $\tilde{w}$ is continuously differentiable.  We are also able to compute the partial derivatives 
$\frac{\partial w}{\partial T}$, $\frac{\partial w}{\partial x}$ and $\frac{\partial^2 w}{\partial x^2}$.
It turns out that $\frac{\partial w}{\partial x}$
has indeed the form we assumed in the heuristics in 
Section~\ref{SecAdvHeuristicValuefunction} (cf.~Equation~\eqref{EqAdvAbleitungHeuristic}).
We also show that $\tfrac{\partial^2 w}{\partial x^2} > 0$ and hence that $w$ is strictly convex in $x$.
We first state the main theorem.

\begin{theorem} \label{TheoremAdvContDiffw}
The candidate value function $\tilde{w}$ given via $w$ as in Equation~\eqref{EqAdvCandidateValueFct} is continuously differentiable on $(0,\infty) \times \mathds{R}$. Furthermore, we have
\begin{align}
\frac{\partial w}{\partial x} (T,x) &= 2 C_1(T,g(T,x)) x + \sgn(x) C_2(T,g(T,x)),  \label{EqAdvdwdx}\\
\frac{\partial w}{\partial T} (T,x) &=
\begin{cases}
C'_0(T) x^2 &\text{if } x\leq\frac{\gamma}{2 \theta C_0(T)} \\ 
\frac{\partial C_1}{\partial T}(T,g(T,x)) x^2 + \frac{\partial C_2}{\partial T}(T,g(T,x))|x| 
	+ \frac{\partial C_3}{\partial T}(T,g(T,x)) & \text{else.}	
\end{cases}
\end{align}
\end{theorem}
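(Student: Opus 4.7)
The plan is to exploit the piecewise structure of $g$ (cf.~\eqref{EqAdvDefg1} and~\eqref{EqAdvDefg2}) to decompose $(0,\infty) \times \mathds{R}$ into regions in which $w$ is manifestly smooth, and then check that the partial derivatives match across the transition curves. By the even symmetry of $w$ in $x$ it suffices to work on $x \geq 0$, where the relevant regions are
$A = \{0 \leq x < \beta(T)\}$ with $g \equiv T$,
$C = \{x > \bar{X}(T,0)\}$ with $g \equiv 0$,
and the interpolation zone $B = \{\beta(T) < x < \bar{X}(T,0)\}$ where $g$ is defined implicitly by $\bar{X}(T,g(T,x)) = x$.

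On $A$, the initial conditions in \eqref{DiffEqAdvC1}--\eqref{DiffEqAdvC3} collapse $w$ to $C_0(T)x^2$, giving smoothness and the two claimed formulas directly. On $C$, $w$ is the true quadratic polynomial $C_1(T,0)x^2 + C_2(T,0)|x| + C_3(T,0)$, smooth in $T$ by Corollary~\ref{CorAdvContDiff2Times} and the explicit limits \eqref{EqAdvC1T}--\eqref{EqAdvC3T}. On $B$, I invoke the implicit function theorem on $\bar{X}(T,g) = x$, using that $\partial_S \bar{X}(T,S) < 0$ (established in Section~\ref{SubsecAdvExplicit}) to obtain $\partial_x g = 1/\partial_S \bar{X}$ and $\partial_T g = -\partial_T \bar{X}/\partial_S \bar{X}$, smooth by Corollary~\ref{CorAdvContDiff2Times}; composition then yields smoothness of $w$ on $B$.

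The main technical step is the algebraic identity that collapses the chain-rule correction. Applied to \eqref{EqAdvCandidateValueFct}, the chain rule gives
\begin{equation*}
\frac{\partial w}{\partial x}(T,x) = 2 C_1(T,g) x + \sgn(x) C_2(T,g) + \bigl(x^2 \partial_S C_1 + |x| \partial_S C_2 + \partial_S C_3\bigr)(T,g)\,\frac{\partial g}{\partial x},
\end{equation*}
so that the theorem's formula for $\partial_x w$, and analogously the one for $\partial_T w$, is equivalent to the identity (heuristically anticipated in \eqref{EqAdvMainLemmaDiffEq})
\begin{equation*}
\bar{X}(T,S)^2\,\frac{\partial C_1}{\partial S}(T,S) + \bar{X}(T,S)\,\frac{\partial C_2}{\partial S}(T,S) + \frac{\partial C_3}{\partial S}(T,S) = 0 \qquad (0 < S < T). \quad (\ast)
\end{equation*}
This is the hard part. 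I would establish $(\ast)$ in one of two ways: either by direct substitution of the closed forms of Proposition~\ref{PropAdvExplicitC} (the route labeled ``tedious'' by the author, cf.~footnote~\ref{FootnoteManip}), or by a more conceptual argument: verify $(\ast)$ at $T = S$, where $\bar{X}(S,S) = \beta(S)$, $C_2(S,S) = C_3(S,S) = 0$ and the partial derivatives $\partial_S C_i(S,S)$ are obtained from the initial conditions, and then check that $\partial_T$ of the left-hand side of $(\ast)$ vanishes by substituting the ODEs \eqref{DiffEqAdvC1}--\eqref{DiffEqAdvC3} and \eqref{DiffEqAdvXbar}.

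It remains to verify that the partial derivatives match across the transition curves $x = \beta(T)$ and $x = \bar{X}(T,0)$. Continuity at $x = \bar{X}(T,0)$ is immediate since $g$ is continuous there with limit $0$ from both sides. At $x = \beta(T)$, the inner limit of the $B$-formula for $\partial_x w$ uses $C_1(T,T) = C_0(T)$ and $C_2(T,T) = 0$ and matches the $A$-value $2C_0(T)\beta(T)$. The matching of $\partial_T w$ at $x = \beta(T)$ is the most delicate check: evaluating \eqref{DiffEqAdvC1}--\eqref{DiffEqAdvC3} at $S = T$ and using $\beta(T) = \gamma/(2\theta C_0(T))$ produces a telescoping cancellation that reduces the $B$-value to $(\alpha - C_0(T)^2/\lambda)\beta(T)^2$, which equals $C_0'(T)\beta(T)^2$ by the Riccati equation satisfied by $C_0$ (immediate from \eqref{EqValFuncAdvBelow}). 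This closes the proof; the principal obstacle is the identity $(\ast)$, which is precisely the smooth-fit consistency condition motivated heuristically in Section~\ref{SecAdvHeuristicValuefunction}.
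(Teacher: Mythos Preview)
Your proposal is correct and follows essentially the same route as the paper: decompose into the three regions, obtain smoothness of $g$ on the interpolation zone via the inverse/implicit function theorem (this is Lemma~\ref{LemAdvDiffg}), prove the cancellation identity $(\ast)$ (this is Lemma~\ref{LemAdvMainDiff}(i)), and then patch across the two transition curves using continuity of the one-sided derivatives together with the mean value theorem.

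One correction to your ``conceptual'' argument for $(\ast)$: when you substitute the ODEs \eqref{DiffEqAdvC1}--\eqref{DiffEqAdvXbar} into $\partial_T$ of the left-hand side, you will \emph{not} find that it vanishes. What actually comes out is that the quantity $a(T,S)$ on the left of $(\ast)$ satisfies a linear homogeneous ODE $\partial_T a = c\, a$ (the paper obtains $c=-\theta$, up to what appears to be a sign typo). Combined with the initial condition $a(S,S)=0$ that you correctly identify, this forces $a\equiv 0$. So the mechanism is right, but the intermediate claim ``$\partial_T(\ast)=0$'' should be replaced by ``$\partial_T(\ast)$ is a scalar multiple of $(\ast)$''.
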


Before we proceed with the proof of Theorem~\ref{TheoremAdvContDiffw}, we show 
that $g$ is continuously differentiable. We prove this in the following lemma and compute
the partial derivatives of $g$.

\begin{lem} \label{LemAdvDiffg}
Let $V:=\{(T,x)\in \mathds{R}^2| T>0, \, x \in (\bar{X}(T,T),\bar{X}(T,0))\}$. 
Then $g$ given by Equations~\eqref{EqAdvDefg1} and~\eqref{EqAdvDefg2} is continuously differentiable on $V$ and
\begin{equation}
\frac{\partial g}{\partial x}(T,x)= \frac{1}{\frac{\partial \bar{X}}{\partial S} (T,g(T,x))}, \quad
\frac{\partial g}{\partial T}(T,x)= 
-\frac{\frac{\partial \bar{X}}{\partial T} (T,g(T,x))}{\frac{\partial \bar{X}}{\partial S} (T,g(T,x))}.  \label{EqAdvgdx}
\end{equation}
\end{lem}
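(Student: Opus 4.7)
The plan is to recognize Equation~\eqref{EqAdvDefg2} as an implicit definition and apply the implicit function theorem to the map
\[
F(T,x,S) := \bar{X}(T,S) - x
\]
on the natural open domain where $T>0$, $(T,x)\in V$ and $S\in(0,T)$; on $V$ the function $g$ is then characterized uniquely by $F(T,x,g(T,x)) = 0$.

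First I would verify the two hypotheses of the implicit function theorem. Corollary~\ref{CorAdvContDiff2Times} gives continuous (indeed analytic) differentiability of $\bar{X}$ in $(T,S)$, and hence of $F$ in all three arguments. The non-degeneracy condition $\partial F/\partial S = \partial \bar{X}/\partial S \neq 0$ is precisely the strict monotonicity of $\bar{X}(T,\cdot)$ asserted in the discussion preceding Proposition~\ref{PropWellDefined}; I would obtain it by differentiating the closed form~\eqref{EqAdvXbar} with respect to $S$ and checking that the result is strictly negative for $S\in(0,T)$.

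With both hypotheses in hand, the implicit function theorem produces a continuously differentiable $g$ on $V$. Differentiating the identity $\bar{X}(T,g(T,x)) = x$ with respect to $x$ and $T$ respectively yields the two scalar equations
\[
\frac{\partial \bar{X}}{\partial S}(T,g(T,x))\,\frac{\partial g}{\partial x}(T,x) = 1, \qquad \frac{\partial \bar{X}}{\partial T}(T,g(T,x)) + \frac{\partial \bar{X}}{\partial S}(T,g(T,x))\,\frac{\partial g}{\partial T}(T,x) = 0,
\]
which, after dividing by the nonzero factor $\partial \bar{X}/\partial S$, give exactly the formulae in~\eqref{EqAdvgdx}.

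The main (and essentially only) obstacle is verifying the sign of $\partial \bar{X}/\partial S$ on $(0,T)$ directly from~\eqref{EqAdvXbar}; this is an elementary but somewhat tedious manipulation of the hyperbolic/exponential terms, of the sort the author repeatedly defers to~\cite{Kratz2011}, and I would follow the same convention rather than grind through the algebra inline.
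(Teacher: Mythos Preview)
Your proof is correct and essentially the same as the paper's: the only difference is that you invoke the implicit function theorem on $F(T,x,S)=\bar X(T,S)-x$, whereas the paper applies the inverse function theorem to the map $h(T,S)=(T,\bar X(T,S))^\top$ and identifies its global inverse with $(T,x)\mapsto(T,g(T,x))^\top$. Both arguments hinge on the same nondegeneracy $\partial\bar X/\partial S\neq 0$ and both obtain the formulae in~\eqref{EqAdvgdx} by differentiating the defining identity $\bar X(T,g(T,x))=x$, so the routes are interchangeable.
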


\begin{proof}
Consider the open set $U:=\{(T,S) \in \mathds{R}^2 | T>0, S \in (0,T) \}$ and the 
continuously differentiable function $h:\mathds{R}^2 \longrightarrow \mathds{R}^2$, $h(T,S):= (T,\bar{X}(T,S))^\top$ (cf. 
Corollary~\ref{CorAdvContDiff2Times}). Then for all $(T_0,S_0)\in U$, the Jacobian matrix of $h$ is given by
\begin{equation*}
Dh(T_0,S_0):=\begin{pmatrix}
1 && \frac{\partial \bar{X}}{\partial T} (T_0,S_0) \\
0 && \frac{\partial \bar{X}}{\partial S} (T_0,S_0) 
\end{pmatrix}.
\end{equation*} 
Note that $\frac{\partial \bar{X}}{\partial S} (T_0,S_0) \not= 0$ as $\bar{X}(T_0, \cdot)$ is strictly decreasing. 
Therefore $\det Dh(T_0,S_0) = \frac{\partial \bar{X}}{\partial S} (T_0,S_0) \not= 0,$ and we can apply the inverse function theorem. Thus, the inverse function $h^{-1}$ of $h$ exists locally and is locally continuously differentiable. However, on $\tilde V:=h(U)=\{(T,X)\in \mathds{R}^2| T>0,\, x \in (\bar{X}(T,T),\bar{X}(T,0))\}$, we have $h^{-1}=\tilde g$ for
$
\tilde g (T,x):=(T,g(T,x))^\top
$
globally, and therefore $g$ is continuously differentiable on $V$. 

The first equation in~\eqref{EqAdvgdx} follows from Equation~\eqref{EqAdvDefg2}
immediately; the second equation follows by differentiating with respect to $T$ 
on either side of Equation~\eqref{EqAdvDefg2}.
\end{proof}

It follows directly that 
$w$ is continuous on $(0,\infty) \times \mathds{R}$.
We can now prove the main building block of the proof of Theorem~\ref{TheoremAdvContDiffw}; we show that
\begin{equation} \notag
\frac{\partial C_1}{\partial S} (T,S) \bar{X}(T,S)^2 +
\frac{\partial C_2}{\partial S} (T,S) \bar{X}(T,S) + \frac{\partial C_3}{\partial S} (T,S)=0.
\end{equation}
This is equivalent to Equation~\eqref{EqAdvMainLemmaDiffEq} in the heuristics of 
Section~\ref{SecAdvHeuristicValuefunction} and was the key step towards the derivations of the 
differential equations for the coefficients $C_1$, $C_2$ and $C_3$. Therefore, the
construction of the candidate value function relies strongly on this property. 

The second part of the lemma is a similar assertion which is useful for the computation of 
the second order derivative
$\frac{\partial^2 w}{\partial x^2}$ and hence for the proof of the convexity 
of $w$ below.

\begin{lem} \label{LemAdvMainDiff}
Let $T>0$ and $S \in (0,T]$. Then
\begin{enumerate}
\item[(i)]
\begin{equation} \notag
a(T,S):=\frac{\partial C_1}{\partial S} (T,S) \bar{X}(T,S)^2 +
\frac{\partial C_2}{\partial S} (T,S) \bar{X}(T,S) + \frac{\partial C_3}{\partial S} (T,S)=0,
\end{equation}
\item[(ii)]
\begin{equation} \notag
b(T,S):=2 \frac{\partial C_1}{\partial S} (T,S) \bar{X}(T,S) +
\frac{\partial C_2}{\partial S} (T,S) =0.
\end{equation}
\end{enumerate}
\end{lem}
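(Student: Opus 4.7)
Fix $S\in(0,T]$ and regard $a(\cdot,S)$ and $b(\cdot,S)$ as functions of the first argument on $[S,\infty)$. I would show that each satisfies a first-order linear homogeneous ODE in $T$ whose right-hand side is proportional to $a$ (respectively $b$) itself, together with vanishing initial value at $T=S$; uniqueness for the resulting IVP then yields $a\equiv b\equiv 0$ simultaneously for both parts of the lemma.

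\textbf{Initial values at $T=S$.} Differentiating the boundary relations $C_1(S,S)=C_0(S)$ and $C_2(S,S)=C_3(S,S)=0$ with respect to $S$ gives
\[
\tfrac{\partial C_1}{\partial S}(S,S)=C_0'(S)-\tfrac{\partial C_1}{\partial T}(S,S),\quad
\tfrac{\partial C_2}{\partial S}(S,S)=-\tfrac{\partial C_2}{\partial T}(S,S),\quad
\tfrac{\partial C_3}{\partial S}(S,S)=-\tfrac{\partial C_3}{\partial T}(S,S).
\]
The closed form~\eqref{EqValFuncAdvBelow} yields the auxiliary Riccati identity $C_0'(S)=\alpha-C_0(S)^2/\lambda$. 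Inserting this and the evaluations of~\eqref{DiffEqAdvC1}–\eqref{DiffEqAdvC3} at $T=S$ (using $C_2(S,S)=C_3(S,S)=0$ to kill several terms) produces
\[
\tfrac{\partial C_1}{\partial S}(S,S)=\theta C_0(S),\qquad
\tfrac{\partial C_2}{\partial S}(S,S)=-\gamma,\qquad
\tfrac{\partial C_3}{\partial S}(S,S)=\tfrac{\gamma^2}{4\theta C_0(S)}.
\]
Combining with $\bar X(S,S)=\gamma/(2\theta C_0(S))$ and plugging into the definitions of $a$ and $b$, the three contributions telescope to $a(S,S)=b(S,S)=0$.

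\textbf{Deriving the ODEs in $T$.} By Corollary~\ref{CorAdvContDiff2Times}, Schwarz's theorem applies, so I differentiate~\eqref{DiffEqAdvC1}–\eqref{DiffEqAdvC3} with respect to $S$ to obtain the mixed partials $\partial^2 C_i/(\partial T\,\partial S)$; crucially the inhomogeneity $\gamma^2/(4\theta C_0(T))$ in~\eqref{DiffEqAdvC3} is $S$-independent and so contributes nothing. Using~\eqref{DiffEqAdvXbar} for $\partial\bar X/\partial T=C_1\bar X/\lambda+C_2/(2\lambda)$ and differentiating the defining formulas for $a$ and $b$, the cross-terms pairing $\tfrac{\partial C_1}{\partial S}\cdot C_2/\lambda$ with $C_2\cdot\tfrac{\partial C_1}{\partial S}/\lambda$ (and the analogous $C_2\cdot\tfrac{\partial C_2}{\partial S}/(2\lambda)$ pair, and all $C_1/\lambda$ coefficients in $\partial_T a$) cancel; what remains simplifies to
\[
\tfrac{\partial a}{\partial T}(T,S)=-\theta\,a(T,S),\qquad
\tfrac{\partial b}{\partial T}(T,S)=-\bigl(\tfrac{C_1(T,S)}{\lambda}+\theta\bigr)\,b(T,S).
\]
Since these are linear homogeneous IVPs on $[S,\infty)$ with zero initial data, uniqueness gives $a\equiv 0$ and $b\equiv 0$, proving both (i) and (ii).

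\textbf{Main obstacle.} The heart of the argument is the algebraic bookkeeping verifying that every term in $\partial_T a$ and $\partial_T b$ that is not itself a multiple of $a$ or $b$ actually cancels. This relies critically on the Riccati structure of~\eqref{DiffEqAdvC1} (so that the $-(2C_1/\lambda+\theta)$ coefficient, after combining with the $2C_1/\lambda$ factor from $\partial_T\bar X$, leaves exactly $-\theta$), and on the common factor $C_1/\lambda+\theta$ appearing in~\eqref{DiffEqAdvC2}. The initial value verification, while short, hinges essentially on the auxiliary Riccati equation satisfied by $C_0$; without this the relation $a(S,S)=0$ would not close.
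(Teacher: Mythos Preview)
Your proposal is correct and follows essentially the same route as the paper: compute the initial values $a(S,S)=b(S,S)=0$ from the boundary relations and the Riccati identity for $C_0$, then show that $a$ and $b$ satisfy linear homogeneous ODEs in $T$ and invoke uniqueness. In fact your signs $\partial_T a=-\theta a$ and $\partial_T b=-(C_1/\lambda+\theta)b$ are the correct ones; the paper states them with the opposite sign (and then writes the solution $C(S)\exp(-\theta T)$, inconsistent with its own ODE), but since the initial data vanish this is immaterial to the conclusion.
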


\begin{proof}
\begin{enumerate}
\item[(i)]
We first show that
\begin{equation} \label{EqAdvLimitagegens}
a(S,S)=\lim\limits_{T\rightarrow S+} a(T,S) = 0
\end{equation}
by computing the derivatives of $C_1$, $C_2$ and $C_3$ with respect to $S$.
Secondly, we verify that
\begin{equation} \notag
\frac{\partial a}{\partial T}(T,S)= \theta a(T,S). 
\end{equation}
Hence, there exists a constant $C(S)$ ($S \in (0,\infty)$) such that for all $T \in (S,\infty)$,
$
a(T,S)=C(S) \exp(-\theta T).
$
Equation~\eqref{EqAdvLimitagegens} implies 
$
0=\lim_{T \rightarrow S+} a(T,S) =C(S) \exp(-\theta S),
$
i.e., $C(S)=0$, finishing the proof of~(i).
\item[(ii)]
We proceed similarly as in the proof of~(i) and show $
\lim_{T \rightarrow S+} b(T,S) = 0
$ and
$\frac{\partial b}{\partial T} (T,S) 
= \big( \frac{C_1(T,S)}{\lambda} + \theta \big) b(T,S)$, from which the assertion follows.
\end{enumerate}
\end{proof}

We are now able to prove that $w$ is continuously differentiable.

\begin{proof}[Proof of Theorem~\ref{TheoremAdvContDiffw}]
Let $T>0$ and $x \in \mathds{R}$. We only treat the case $x \geq 0$. The case $x <0$ follows accordingly. We
consider four cases separately.  
\begin{enumerate}
\item[(i)] $x <\frac{\gamma}{2 \theta C_0(t)}$.
In this case we have $g(v,y)=v$ for all $(v,y)$ in some neighborhood of $(T,x)$
(cf.~Equation~\eqref{EqAdvDefg1}). Therefore, 
$
w(v,y)=C_0(v) y^2
$ 
and consequently (recall that $C_1(T,T)=C_0(T)$, $C_2(T,T)=0$)
\begin{equation} 
\frac{\partial w}{\partial x} (T,x)  = 2 C_0(T) x 
= 2 C_1(T,T) x + C_2(T,T) 
= 2 C_1(T,g(T,x)) x + C_2(T,g(T,x)) \label{Eqdwdxcase1} 
\end{equation}
and
\begin{align}
\frac{\partial w}{\partial T} (T,x) &= C'_0(T)  x^2. \label{Eqdwdtcase1}
\end{align} 
\item[(ii)] $x \in ( \frac{\gamma}{2 \theta C_0(t)}, \bar{X}(T,0))$. 
This is equivalent to 
$x=\bar{X}(T,S)$, $g(T,x)=S$ for some $S \in (0,T)$. We have  
$y \in ( \frac{\gamma}{2\theta C_0(v)},\bar{X}(T,0))$ and therefore $g(v,y) \in (0,T)$
for all $(v,y)$ in some neighborhood of $(T,x)$; thus
\[
w(v,y) =C_1(v,g(v,y)) y^2 + C_2(v,g(v,y)) y +C_3(v,g(v,y)).
\]
Using Lemma~\ref{LemAdvDiffg} and Lemma~\ref{LemAdvMainDiff}~(i), we obtain
\begin{equation}
\frac{\partial w}{\partial x} (T,x) = 2 C_1(T,g(T,x)) x + C_2(T,g(T,x)) \label{Eqdwdxcase2}
\end{equation} 
and
\begin{equation}
\frac{\partial w}{\partial T} (T,x) =  \frac{\partial C_1}{\partial T}(T,g(T,x)) x^2 
	+ \frac{\partial C_2}{\partial T}(T,g(T,x)) x +\frac{\partial C_3}{\partial T}(T,g(T,x)). \label{Eqdwdtcase2}
\end{equation} 
\item[(iii)] $x > \bar{X}(T,0)$. 
In this case we have $y > \bar{X}(v,0)$ and $g(v,y)=0$ for all $(v,y)$ 
in some neighborhood of
$(T,x)$ (cf.~Equation~\eqref{EqAdvDefg1} again); thus
$
w(v,y)=C_1(v,0) y^2 + C_2(v,0) y +C_3(v,0).
$
Therefore,
\begin{align} 
\frac{\partial w}{\partial x} (T,x)  &=  2 C_1(T,0) x + C_2(T,0) 
= 2 C_1(T,g(T,x)) x + C_2(T,g(T,x)), \label{Eqdwdxcase3} \\
\frac{\partial w}{\partial T} (T,x)  &=   \frac{\partial C_1}{\partial T} (T,0) x^2 
	+ \frac{\partial C_2}{\partial T} (T,0) x +\frac{\partial C_3}{\partial T}(T,0). \label{Eqdwdtcase3}
\end{align} 
\item[(iv)] $x \in  \{ \frac{\gamma}{2 \theta C_0(t)}, \bar{X}(T,0)\}$. Recall first that $w$ is continuous. Furthermore, as $g(T,\cdot)$ is continuous in $x$ with values in $[0,T]$ and $C_1$ and $C_2$ are continuous in $[0,T]$,  $\frac{\partial w}{\partial x} (T,\cdot)$ can be extended continuously to $\mathds{R}$ (cf. also Equations~\eqref{Eqdwdxcase1}, \eqref{Eqdwdxcase2} and 
\eqref{Eqdwdxcase3}). Let now $h_n\not=0$ ($n\in \mathds{N}$), $h_n \rightarrow 0$. For large enough $n$, 
the mean value theorem implies that there exists $\xi_n$ strictly between $x$ and $x-h_n$ such that
$
\frac{w(T,x)-w(T,x-h_n)}{h_n} =\frac{\partial w}{\partial x} (T,\xi_n).
$ 
As $\lim_{n\rightarrow \infty} \xi_n=x$, this implies
\begin{equation*}
\frac{\partial w}{\partial x} (T,x)= \lim\limits_{n\rightarrow \infty}
\frac{w(T,x)-w(T,x-h_n)}{h_n} 
=\lim\limits_{n\rightarrow \infty}\frac{\partial w}{\partial x} (T,\xi_n),
\end{equation*} 
i.e., $w(T,\cdot)$ is differentiable in $x$ with derivative
\begin{equation*}
\frac{\partial w}{\partial x} (T,x) = \lim\limits_{n\rightarrow \infty}\frac{\partial w}{\partial x} (T,\xi_n)
= 2 C_1(T,g(T,x)) x + C_2(T,g(T,x)).
\end{equation*}

For differentiation with respect to $T$, note that by the differential equations for $C_0(\cdot)$,
$C_1(\cdot,S)$, $C_2(\cdot,S)$ and $C_3(\cdot,S)$, we obtain for $x=\frac{\gamma}{2 \theta C_0(t)}$ that
\[
\frac{\partial C_1}{\partial T}(T,T) x^2 + \frac{\partial C_2}{\partial T}(T,T) x 
+ \frac{\partial C_3}{\partial T}(T,T) = C'_0(T) x^2.
\] 
Therefore, $\frac{\partial w}{\partial T} (\cdot,x)$ can be extended continuously to $(0,\infty)$ by Equations~\eqref{Eqdwdtcase1}, \eqref{Eqdwdtcase2} and \eqref{Eqdwdtcase3}. 
Using the mean value
theorem in a similar fashion as before, we obtain that $w(\cdot,x)$ is differentiable in $T$ 
with derivative
\begin{equation*}
\frac{\partial w}{\partial T} (T,x) =
\begin{cases}
\frac{\partial C_1}{\partial T}(T,T) x^2 + \frac{\partial C_2}{\partial T}(T,T) x 
	+ \frac{\partial C_3}{\partial T}(T,T)
= C'_0(T) x^2 &\text{if } x=\frac{\gamma}{2 \theta C_0(T)} \\ 
\frac{\partial C_1}{\partial T}(T,0) x^2 + \frac{\partial C_2}{\partial T}(T,0) x 
	+ \frac{\partial C_3}{\partial T}(T,0) &\text{if } x=\bar{X}(T,0).
\end{cases}
\end{equation*}
\end{enumerate}
\end{proof}

By Theorem~\ref{TheoremAdvContDiffw}, we obtain
first order partial derivatives
of $w$. 
We can deduce strict convexity of
$w$ (and hence $\tilde{w}$) in $x$ by computing the second order partial derivative
$
\frac{\partial^2 w}{\partial x^2}.
$
The key step towards this goal was accomplished in Lemma~\ref{LemAdvMainDiff}~(ii).

\begin{theorem} \label{CorAdvConvexx}
The candidate value function $\tilde{w}$ given via $w$ as in Equation~\eqref{EqAdvCandidateValueFct} is strictly convex in $x \in \mathds{R}$.
\end{theorem}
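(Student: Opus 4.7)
The plan is to show that wherever the second derivative $\frac{\partial^2 w}{\partial x^2}(T,x)$ exists it equals $2C_1(T,g(T,x))$, which is strictly positive by Proposition~\ref{PropAdvExplicitC} (Equation~\eqref{EqAdvC1}) and its limiting version~\eqref{EqAdvC1T}; combining this with the continuity of $\frac{\partial w}{\partial x}(T,\cdot)$ already established in Theorem~\ref{TheoremAdvContDiffw} then yields that $\frac{\partial w}{\partial x}(T,\cdot)$ is continuous and strictly increasing on $\mathds{R}$, which is equivalent to strict convexity of $w(T,\cdot)$. By symmetry about $x=0$ it suffices to treat $x\geq 0$, and I will split along the same four cases as the proof of Theorem~\ref{TheoremAdvContDiffw}.

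In cases (i) and (iii), where $g(T,\cdot)$ is locally constant ($\equiv T$ for $x<\gamma/(2\theta C_0(T))$ and $\equiv 0$ for $x>\bar{X}(T,0)$), the candidate is locally a genuine quadratic polynomial with leading coefficient $C_0(T)$ and $C_1(T,0)$ respectively, both strictly positive, so $\frac{\partial^2 w}{\partial x^2}=2C_1(T,g(T,x))>0$ by direct computation. The main case is (ii), $x\in(\gamma/(2\theta C_0(T)),\bar{X}(T,0))$. There I would differentiate the explicit formula~\eqref{EqAdvdwdx} with respect to $x$, invoke Lemma~\ref{LemAdvDiffg} for $\frac{\partial g}{\partial x}$, and obtain
\[
\frac{\partial^2 w}{\partial x^2}(T,x)=2C_1(T,g(T,x))+\frac{\partial g}{\partial x}(T,x)\Bigl[2\tfrac{\partial C_1}{\partial S}(T,g(T,x))\,x+\tfrac{\partial C_2}{\partial S}(T,g(T,x))\Bigr].
\]
The decisive observation is that in this region $x=\bar{X}(T,g(T,x))$ by~\eqref{EqAdvDefg2}, so the bracketed expression coincides exactly with $b(T,g(T,x))$ from Lemma~\ref{LemAdvMainDiff}~(ii) and therefore vanishes. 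Hence $\frac{\partial^2 w}{\partial x^2}(T,x)=2C_1(T,g(T,x))>0$ on the intermediate region as well.

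In case (iv), the two junction points $x=\gamma/(2\theta C_0(T))$ and $x=\bar{X}(T,0)$, the second derivative need not exist, but the one-sided second derivatives computed from the neighbouring cases are $2C_0(T)=2C_1(T,T)$ and $2C_1(T,0)$, both strictly positive, and they match the values from case (ii) by continuity of $C_1(T,\cdot)$ and $g(T,\cdot)$; consequently $\frac{\partial w}{\partial x}(T,\cdot)$ is strictly increasing across these points as well. The main obstacle in this argument is precisely the recognition that the $x$-derivative of $g$ produces a term that is killed by Lemma~\ref{LemAdvMainDiff}~(ii); this is the structural reason why the second part of that lemma was proved in advance, and once it is invoked the whole statement reduces to reading off the sign of $C_1$ from Proposition~\ref{PropAdvExplicitC}.
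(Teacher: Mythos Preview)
Your proposal is correct and follows essentially the same route as the paper: compute $\frac{\partial^2 w}{\partial x^2}(T,x)$ region by region, use Lemma~\ref{LemAdvMainDiff}~(ii) to eliminate the $\frac{\partial g}{\partial x}$-term in the intermediate region, and conclude from $\frac{\partial^2 w}{\partial x^2}(T,x)=2C_1(T,g(T,x))>0$. Your treatment is in fact slightly more explicit than the paper's, which handles the small-$x$ region only implicitly and at the junction points argues (via the same mean-value-theorem trick as in Theorem~\ref{TheoremAdvContDiffw}) that the second derivative actually exists there rather than merely that $\frac{\partial w}{\partial x}$ is strictly increasing; either version suffices.
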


\begin{proof}
Let $T>0$ and $x\in \mathds{R}$. We consider the case $x\geq 0$. The case $x<0$ can be proven accordingly. 
Thus, $x=\bar{X}(T,S)$ for some $S \in (0,T)$ or $x \geq \bar{X}(T,0)$. In the first case, Theorem~\ref{TheoremAdvContDiffw}, Lemma~\ref{LemAdvDiffg} and Lemma~\ref{LemAdvMainDiff}~(ii) yield
$
\frac{\partial^2 w}{\partial x^2} (T,x) = 2 C_1(T,g(T,x))$.
For $x > \bar{X}(T,0)$, we obtain
$
\frac{\partial^2 w}{\partial x^2} (T,x)  =  2 C_1(T,0).$
Hence, $\frac{\partial^2 w}{\partial x^2} (T,x)$ can be extended
continuously to $x=\bar{X}(T,0)$ and we obtain
\begin{equation}
\frac{\partial^2 w}{\partial x^2} (t,\bar{X}(T,0))  =  2 C_1(T,0) \notag
\end{equation}
by the same argument as in the proof of Theorem~\ref{TheoremAdvContDiffw}.
Therefore (and by the respective considerations for $x<0$), the second partial derivative of $w$ with respect to $x$ exists and
$
\frac{\partial^2 w}{\partial x^2} (T,x) >0,
$
hence $w$ is strictly convex in $x$.
\end{proof}

\subsection{Classical solution of the HJB equation} \label{SubSectAdvHJB}

The fact that $w$ is differentiable and the formulae for the partial derivatives finally enable us to prove that $w$ is a classical solution of the HJB Equation~\eqref{EqHJBAdv}.

\begin{theorem} \label{CorAdvHJBMain}
The candidate value function $w$ given by Equation~\eqref{EqAdvCandidateValueFct} is a classical solution of the
HJB Equation~\eqref{EqHJBAdv} with unique minimizer 
$u^*=(\xi^*,\eta^*)$, where
\begin{align} 
\xi^*=\xi^*(T,x)&=
	\frac{2 C_1(T,g(T,x)) x+ \sgn(x) C_2(T,g(T,x))}{2 \lambda}, \label{EqAdvOptimalxi}\\
\eta^* =\eta^*(T,x)&= 
	\begin{cases}
	\sgn(x) \Big(| x |- \frac{\gamma}{2 \theta C_0(T)} \Big) &\text{if } |x| >\frac{\gamma}{2 \theta C_0(t)} \\
	0 &\text{if } |x| \leq \frac{\gamma}{2 \theta C_0(t)}.
	\end{cases} \label{EqAdvOptimaleta}
\end{align}
\end{theorem}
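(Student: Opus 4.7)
The plan is to reduce the joint minimization in the HJB equation to two one-dimensional minimizations in $\xi$ and $\eta$, exploiting the fact that the term $-\frac{\partial w}{\partial x}(T,x)\xi + \lambda\xi^2$ depends only on $\xi$, while $\theta w(T,x-\eta)+\gamma|\eta|$ depends only on $\eta$. Both subproblems are strictly convex (the first by the positive $\lambda$, the second by Theorem~\ref{CorAdvConvexx} together with convexity of $|\cdot|$), so unique minimizers exist and coincide with the stationary points. Once $(\xi^*,\eta^*)$ is identified, the equation itself will follow by substituting the formula for $\frac{\partial w}{\partial x}$ from Theorem~\ref{TheoremAdvContDiffw} and reading off the ODEs~\eqref{DiffEqAdvC1}--\eqref{DiffEqAdvC3} that $C_1,C_2,C_3$ satisfy.

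For the $\xi$-part, completing the square immediately gives the unique minimizer $\xi^*=\frac{1}{2\lambda}\frac{\partial w}{\partial x}(T,x)$, which by Theorem~\ref{TheoremAdvContDiffw} equals the expression in~\eqref{EqAdvOptimalxi}. For the $\eta$-part, treat $x\ge 0$ (the case $x<0$ is symmetric). I would use a subdifferential argument at $\eta=0$: since $w(T,\cdot)$ is differentiable and convex, $\eta=0$ minimizes $\bar h(\eta):=\theta w(T,x-\eta)+\gamma|\eta|$ if and only if $0\in\partial\bar h(0)$, i.e.\ $\theta\frac{\partial w}{\partial x}(T,x)\in[-\gamma,\gamma]$. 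Using Theorem~\ref{TheoremAdvContDiffw} and the initial condition $C_2(T,T)=0$, the boundary point $x=\beta(T):=\tfrac{\gamma}{2\theta C_0(T)}$ satisfies $\frac{\partial w}{\partial x}(T,\beta(T))=2C_0(T)\beta(T)=\gamma/\theta$; combined with strict convexity of $w(T,\cdot)$, the condition $|\theta\frac{\partial w}{\partial x}|\le\gamma$ is equivalent to $|x|\le\beta(T)$, so $\eta^*=0$ in this region. For $x>\beta(T)$, the first-order condition on $(0,\infty)$ reads $\theta\frac{\partial w}{\partial x}(T,x-\eta)=\gamma$, whose unique solution (again by strict convexity) is $\eta^*=x-\beta(T)$, recovering~\eqref{EqAdvOptimaleta}.

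It remains to verify $\frac{\partial w}{\partial T}(T,x)=h(T,x,\xi^*,\eta^*)$ in each of the three regions $|x|\le\beta(T)$, $|x|\in(\beta(T),\bar X(T,0))$, $|x|\ge\bar X(T,0)$. In the innermost region, $\eta^*=0$ kills the Poisson term, the candidate reduces to $w=C_0(T)x^2$, and the equation collapses to the Riccati identity $C_0'(T)=\alpha-C_0(T)^2/\lambda$ satisfied by~\eqref{EqValFuncAdvBelow}. In the intermediate region, $x-\eta^*=\beta(T)$ gives $w(T,x-\eta^*)=C_0(T)\beta(T)^2=\gamma^2/(4\theta^2 C_0(T))$; plugging in $\xi^*$ (which yields $-\frac{(\partial w/\partial x)^2}{4\lambda}=-\frac{(2C_1 x+\sgn(x)C_2)^2}{4\lambda}$) and collecting coefficients of $x^2$, $|x|$, and the constant, each coefficient equation becomes exactly~\eqref{DiffEqAdvC1alt}, \eqref{DiffEqAdvC2alt}, \eqref{DiffEqAdvC3alt}, which via Lemma~\ref{LemAdvMainDiff}(i) agree with~\eqref{DiffEqAdvC1}--\eqref{DiffEqAdvC3} evaluated at $S=g(T,x)$; the LHS $\frac{\partial w}{\partial T}$ from Theorem~\ref{TheoremAdvContDiffw} then matches term by term. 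The outer region is handled identically with $S=0$.

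The main obstacle, I expect, will be the bookkeeping in the intermediate region: even though Lemma~\ref{LemAdvMainDiff}(i) was precisely engineered so that the $\partial g/\partial T$ contributions drop out of $\frac{\partial w}{\partial T}$, one still has to organize the computation so that each power of $|x|$ in $h(T,x,\xi^*,\eta^*)$ is matched against the corresponding ODE for $C_i(\cdot,g(T,x))$. The singular terminal condition is then straightforward: for $x=0$, one is in the inner region with $w(T,0)=C_0(T)\cdot 0=0$; for $x\neq 0$, since $\beta(T)\to 0$ and $\bar X(T,0)\to 0$ as $T\to 0+$, the point $x$ eventually lies in the outer region and $w(T,x)\ge C_1(T,0)x^2\to\infty$ because $C_1(T,0)\to\infty$ by~\eqref{EqAdvC1T}.
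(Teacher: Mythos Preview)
Your proposal is correct and follows essentially the same route as the paper: the paper separates the minimization into independent $\xi$- and $\eta$-subproblems (this is packaged as Lemma~\ref{lemAdvHJBLemma}), identifies the minimizers via the quadratic structure in $\xi$ and the strict convexity of $w(T,\cdot)$ in the $\eta$-direction, and then verifies the PDE by matching against the ODEs~\eqref{DiffEqAdvC1}--\eqref{DiffEqAdvC3} (respectively the Riccati equation for $C_0$) through Theorem~\ref{TheoremAdvContDiffw}. Your subdifferential characterization of the $\eta$-minimizer is equivalent to the paper's one-sided derivative argument, and your explicit check of the singular terminal condition is a welcome addition that the paper's proof of this theorem leaves implicit; note also that the reference to Lemma~\ref{LemAdvMainDiff}(i) in the intermediate region is not needed separately, since its content is already absorbed into the formula for $\frac{\partial w}{\partial T}$ in Theorem~\ref{TheoremAdvContDiffw}.
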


\begin{proof}
The theorem follows directly from the subsequent Lemma~\ref{lemAdvHJBLemma} and Theorem~\ref{TheoremAdvContDiffw} by using the Differential Equations~\eqref{DiffEqAdvC1}, \eqref{DiffEqAdvC2} and \eqref{DiffEqAdvC3} for the case 
$|x| > \frac{\gamma}{2 \theta C_0(t)}$ and the differential equation $C'_0(\cdot) = \alpha - \frac{C_0(\cdot)^2}{\lambda}$ 
for the case 
$|x| \leq \frac{\gamma}{2 \theta C_0(t)}$.
\end{proof}

\begin{lem} \label{lemAdvHJBLemma}
Let $h:  (0,\infty) \times \mathds{R} \times \mathds{R} \times \mathds{R} \rightarrow \mathds{R}$ be given by
\begin{equation} \notag
h(T,x,\xi,\eta) :=\theta \big( w(T,x-\eta)-w(t, x) \big) - \frac{\partial w}{\partial x} (t,x) \xi + f(\xi,\eta ,x).
\end{equation}
For fixed $T>0$ and $x\in \mathds{R}$, $h(T,x,\cdot,\cdot)$ attains its unique minimum at $u^*=(\xi^*,\eta^*)$ for $\xi^*$ and $\eta^*$ as in Equations~\eqref{EqAdvOptimalxi} and \eqref{EqAdvOptimaleta}, respectively.

Moreover,
\begin{equation} 
h(T,x,\xi^*,\eta^*) =
\begin{cases}
\Big( \alpha - \frac{C_1(T,g(T,x))^2}{\lambda}- \theta C_1(T,g(T,x)) \Big) x^2 \\
\quad +\Big(\gamma - C_2(T,g(T,x)) \Big( \frac{C_1(T,g(T,x))}{\lambda}+\theta \Big) \Big) |x| \\
\quad -\theta C_3(T,g(T,x)) - \frac{\gamma^2}{4 \theta C_0(T)} - \frac{C_2(T,g(T,x))^2}{4 \lambda} &\text{if }
	|x| > \frac{\gamma}{2 \theta C_0(T)} \\
\Big( \alpha - \frac{C_0(T)^2}{\lambda} \Big) x^2 &\text{if }
	|x| \leq \frac{\gamma}{2 \theta C_0(T)}. 
\end{cases}
\label{EqAdvheingesetzt}
\end{equation}
\end{lem}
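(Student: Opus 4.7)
The plan is to minimize $h(T,x,\cdot,\cdot)$ by first minimizing in $\xi$ (a strictly convex quadratic for any fixed $\eta$) and then minimizing in $\eta$, where the problem reduces to $\bar h(\eta):=\theta w(T,x-\eta)+\gamma|\eta|$. The former is immediate; the main work is the minimization of $\bar h$, which we carry out using the smoothness and strict convexity of $w$ established in Theorem~\ref{TheoremAdvContDiffw} and Theorem~\ref{CorAdvConvexx}, together with the explicit formula for $\frac{\partial w}{\partial x}$.

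First I would regard $\xi$ as the only variable in $h$. Since $\lambda>0$, completing the square gives
\[
h(T,x,\xi,\eta)=\theta\bigl(w(T,x-\eta)-w(T,x)\bigr)+\gamma|\eta|+\alpha x^2-\frac{\bigl(\tfrac{\partial w}{\partial x}(T,x)\bigr)^2}{4\lambda}+\lambda\Bigl(\xi-\tfrac{1}{2\lambda}\tfrac{\partial w}{\partial x}(T,x)\Bigr)^{\!2},
\]
so the unique minimizer in $\xi$ is $\xi^*=\frac{1}{2\lambda}\frac{\partial w}{\partial x}(T,x)$, which by Theorem~\ref{TheoremAdvContDiffw} equals the expression in~\eqref{EqAdvOptimalxi}. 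The residual problem is to minimize $\bar h$ in $\eta$.

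Next I would show uniqueness and identify the minimizer of $\bar h$. Theorem~\ref{CorAdvConvexx} gives that $w(T,\cdot)$ is strictly convex, so $\eta\mapsto\theta w(T,x-\eta)$ is strictly convex; adding $\gamma|\eta|$ preserves strict convexity, hence any stationary point (in the subdifferential sense) is the unique minimizer. Using Theorem~\ref{TheoremAdvContDiffw}, the one-sided derivatives of $\bar h$ at $\eta=0$ are $-\theta\frac{\partial w}{\partial x}(T,x)\pm\gamma$, so $\eta^*=0$ is optimal iff $\bigl|\frac{\partial w}{\partial x}(T,x)\bigr|\le\gamma/\theta$. For $|x|\le\frac{\gamma}{2\theta C_0(T)}$ one has $g(T,x)=T$, $C_1(T,T)=C_0(T)$, $C_2(T,T)=0$, so $\frac{\partial w}{\partial x}(T,x)=2C_0(T)x$ and this bound holds; this gives the second line of~\eqref{EqAdvOptimaleta}. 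For $|x|>\frac{\gamma}{2\theta C_0(T)}$, WLOG $x>0$; then I would seek $\eta^*>0$ satisfying $\theta\frac{\partial w}{\partial x}(T,x-\eta^*)=\gamma$. The candidate $\eta^*=x-\frac{\gamma}{2\theta C_0(T)}$ satisfies $x-\eta^*\in\bigl[0,\frac{\gamma}{2\theta C_0(T)}\bigr]$, so again $g(T,x-\eta^*)=T$ and $\frac{\partial w}{\partial x}(T,x-\eta^*)=2C_0(T)(x-\eta^*)=\gamma/\theta$, verifying the first-order condition and therefore optimality by strict convexity.

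Finally I would verify~\eqref{EqAdvheingesetzt} by plugging $(\xi^*,\eta^*)$ back. In the completed-square form, the residual equals
\[
\theta w(T,x-\eta^*)-\theta w(T,x)+\gamma|\eta^*|+\alpha x^2-\frac{\bigl(\tfrac{\partial w}{\partial x}(T,x)\bigr)^2}{4\lambda}.
\]
In the region $|x|\le\frac{\gamma}{2\theta C_0(T)}$, $\eta^*=0$ and $\frac{\partial w}{\partial x}(T,x)=2C_0(T)x$ give the second branch of~\eqref{EqAdvheingesetzt} directly. In the region $|x|>\frac{\gamma}{2\theta C_0(T)}$, I would use $x-\eta^*=\sgn(x)\frac{\gamma}{2\theta C_0(T)}$ so that $w(T,x-\eta^*)=\frac{\gamma^2}{4\theta^2 C_0(T)}$, expand $w(T,x)=C_1 x^2+C_2|x|+C_3$ (arguments suppressed) and $\bigl(\frac{\partial w}{\partial x}\bigr)^2=4C_1^2x^2+4C_1C_2|x|+C_2^2$, and then collect terms by powers of $|x|$; the $\gamma|\eta^*|=\gamma|x|-\frac{\gamma^2}{2\theta C_0(T)}$ term combines with $\theta w(T,x-\eta^*)$ to give the $-\frac{\gamma^2}{4\theta C_0(T)}$ constant, matching the first branch. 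The main obstacle is essentially bookkeeping the signs and absolute values, plus the nonsmoothness of $|\eta|$ at $0$, which is handled cleanly via subdifferentials and the identity $\sgn(x)C_2\cdot\sgn(x)=C_2$, $\sgn(x)\cdot x=|x|$.
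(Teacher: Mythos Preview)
Your proposal is correct and follows essentially the same approach as the paper: both separate the minimization in $\xi$ (complete the square) from the minimization in $\eta$ (via the function $\bar h(\eta)=\theta w(T,x-\eta)+\gamma|\eta|$), invoke strict convexity of $w$ from Theorem~\ref{CorAdvConvexx}, verify the first-order/subdifferential condition at the candidate $\eta^*$, and then plug in to obtain~\eqref{EqAdvheingesetzt}. Your subdifferential formulation is slightly more concise than the paper's case-by-case computation of one-sided limits, but the content is identical.
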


\begin{proof}
Let $T>0$ and $x \in \mathds{R}$.
We have 
\begin{equation} \notag
h(T,x,\xi,\eta) =- \theta w(T,x) + \alpha x^2 - h_1(T,x,\xi) - h_2(T,x,\eta)
\end{equation}
for
\begin{equation*}
h_1(T,x,\xi) := \lambda \xi^2 - \frac{\partial w}{\partial x} (T,x) \xi =  \lambda \Big( \frac{\frac{\partial w}{\partial x} (T,x)}{2\lambda}- \xi \Big)^2 -
	\frac{\frac{\partial w}{\partial x} (T,x)^2}{4\lambda}, \quad
h_2(T,x,\eta) := \theta w(T, x- \eta) + \gamma |\eta|.
\end{equation*}
As $\lambda >0$, $h_1(T,x,\cdot)$ is strictly convex in $\xi$. Furthermore, $h_2(T,x,\cdot)$ is strictly convex in $\eta$ by 
Theorem~\ref{CorAdvConvexx}. Thus, $h(T,x,\cdot,\cdot)$ is strictly convex in $(\xi,\eta)$ and attains its unique global minimum in 
$(\xi^*,\eta^*)$ if $h_1(T,x,\cdot)$ attains its unique global minimum in $\xi^*$ and $h_2$ attains its unique global minimum in $\eta^*$.

By Theorem~\ref{TheoremAdvContDiffw}, $h_1$ attains its unique global minimum for $\xi^*$ as in Equation~\eqref{EqAdvOptimalxi}. 

Note now that $h_2 \geq 0$. For $x=0$, 
$
h_2(T,x,\eta^*)=h_2(T,0,0)=0,
$
and therefore $h_2$ attains its unique global minimum in $\eta^*$ as in Equation~\eqref{EqAdvOptimaleta}.

For $\eta \not=0$, $h_2$ is differentiable in $\eta$. For $|x| > \frac{\gamma}{2 \theta C_0(T)}$, we  have
\begin{equation*}
\frac{\partial h_2}{\partial \eta} (T,x,\eta^*)
=-\theta \Big( \frac{w}{\partial x}(T, x- \eta^*) - \sgn(x) \frac{\gamma}{\theta} \Big) =-\theta \Big( 2 C_0(T) \sgn(x) \frac{\gamma}{2 \theta C_0(T)} - \sgn(x) \frac{\gamma}{\theta} \Big)  =0 \notag
\end{equation*}
for $\eta^*=\eta^*(T,x)$ as in Equation~\eqref{EqAdvOptimaleta} (note that $\eta^*(T,x)\not=0$ 
for $|x| > \frac{\gamma}{2 \theta C_0(T)}$).
By strict convexity, we conclude that $h_2(T,x, \cdot)$ attains its unique minimum at $\eta^*$ as in 
Equation~\eqref{EqAdvOptimaleta}.

Let now $x\in (0,\frac{\gamma}{2 \theta C_0(t)})$. We have
\begin{align}
\lim\limits_{\eta \rightarrow 0-} \frac{\partial h_2}{\partial \eta} (T,x,\eta) 
	&=\lim\limits_{\eta \rightarrow 0-} (-2 \theta  C_0(t)  (x-\eta)  - \gamma)
	=- 2 \theta  C_0(T)  x  - \gamma < 0, \notag \\
\lim\limits_{\eta \rightarrow 0+} \frac{\partial h_2}{\partial \eta} (t,x,\eta) 
	&=\lim\limits_{\eta \rightarrow 0+} (- 2 \theta  C_0(T)  (x-\eta)  + \gamma)
	= 2 \theta  C_0(T)  x  + \gamma > 0. \notag
\end{align}
Therefore, $\eta^*=0$ minimizes $h_2$ uniquely by strict convexity as required.

By continuity of $\eta^*$ and $h_2$, $\eta^*(T,x)=0$
minimizes $h_2$ uniquely for $x=\frac{\gamma}{2\theta C_0(T)}$.

The case $x\in [-\frac{\gamma}{2 \theta C_0(t)},0)$ follows similarly as above.
We plug $(\xi^*,\eta^*)$ into $h$ and obtain Equation~\eqref{EqAdvheingesetzt}, finishing the proof.
\end{proof}

\section{Solution of the optimization problem} \label{SecVerificationAdv}

In the previous sections we collected the building blocks
for solving the Optimization Problem~\eqref{EqValueFctAdv}. The goal of this section
is to prove that~\eqref{EqValueFctAdv} is solved by the Markovian candidate for the optimal control $u^*_T=(\xi^*_T,\eta^*_T)$, where (cf.~Equations~\eqref{EqAdvOptimalxi} and \eqref{EqAdvOptimaleta})
\begin{align} 
\xi^*_T(t)=\xi^*(T-t,y)&=
	\frac{2 C_1(T-t,g(T-t,y)) x+ \sgn(y) C_2(T-t,g(T-t,y))}{2 \lambda}, \label{EqAdvOptimalxineu}\\
\eta^*_T(t) =\eta^*(T-t,y)&= 
	\begin{cases}
	\sgn(y) \Big(| y |- \frac{\gamma}{2 \theta C_0(T-t)} \Big) &\text{if } |x| >\frac{\gamma}{2 \theta C_0(T)} \\
	0 &\text{if } |x| \leq \frac{\gamma}{2 \theta C_0(T)}
	\end{cases} \label{EqAdvOptimaletaneu}
\end{align}
for state $y$ at time $t-$,
and that the value function is given by the candidate value function $w$.
Given the control $u^*_T$, we denote the process controlled by $u^*_T$ by
\[
X^*_T:=X^{u^*_T}.
\]

We proceed as follows. In Section~\ref{SubSecAdvAdmissible}, we obtain
bounds for $w$, $u^*_T$ and $X^*_T$. Using these bounds, we show that $u^*_T$ is 
an admissible control. We prove the verification theorem in Section~\ref{SubSecVerificationAdv}. For both tasks it is essential that the $w$ and $u^*_T$ are given in closed form. Firstly, this enables us to verify that the required moment bounds implied by Definition~\ref{DefAdmStr} hold for $u^*_T$ (cf.~Equation~\eqref{EqIntegrability}). Secondly, the bounds for $u^*_T$ turn into an upper bound for  $X^*_T$ via Gronwall's inequality; this is required in order to prove that the Terminal Constraint~\eqref{EqLiqConstraint} is satisfied. Finally, the closed form solutions are key for the verification argument: Because of the conjectured  singularity of the value function at terminal time $T$, we have to consider the limit $t \rightarrow T-$ and exchange integration and limits; the main step in this respect is accomplished in Lemma~\ref{LemLimitwforallu}.

\subsection{Admissibility of the candidate optimal control}  \label{SubSecAdvAdmissible}

Using Equation~\eqref{EqAdvC1}, it can be shown by direct computation of the respective partial derivative that $C_1(T,\cdot)$ is strictly increasing in $[0,T]$. By the series expansion of $\coth$, Equation~\eqref{Eq1dimesionalvaluefct} 
and Equation~\eqref{EqAdvC1T} we obtain the following inequalities.
For $T>0$ and $S_1,S_2 \in (0,T)$, $S_1<S_2$,
\begin{equation} \label{IneqAdvValueCoeff}
 0 < C(T) = C_1(T,0)  <  C_1(T,S_1)  <  C_1(T,S_2)  <  C_1(T,T)=C_0(T) 
\!\leq \! \frac{\lambda}{T} + \sqrt{\alpha \lambda}.
\end{equation} 

Using Taylor's theorem with integral remainder term, we
can deduce the following useful bounds for the candidate 
value function $w$ and the candidate optimal absolutely continuous control $\xi_T^*$.

\begin{prop} \label{PropAdvBounds}
Let $T>0$ and $x \not=0$. Then
\begin{align}
C_1(T,0) x^2 &< w(T,x) \leq C_0(T) x^2 \leq \Big(\frac{\lambda}{T} +\sqrt{\alpha \lambda}\Big) x^2,  \label{IneqAdvValueBound}\\
\frac{C_1(T,0)}{\lambda} |x| &< |\xi^*(T,x)|=|\xi^*_T(0)|  \leq \frac{C_0(T)}{\lambda} |x| 
	\leq \Big(\frac{1}{T}+\sqrt{\frac{\alpha}{\lambda}}\Big) |x|.
	\label{IneqAdvStrategyBound}
\end{align}
\end{prop}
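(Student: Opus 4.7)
The plan is to bound $w(T,x)$ via a second-order Taylor expansion in $x$ around $0$ and bound $\xi^*(T,x)$ via a first-order one, using the monotonicity inequality~\eqref{IneqAdvValueCoeff} as the sandwich.

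First I would establish two base values at $x=0$. From~\eqref{EqAdvDefg1} we have $g(T,0)=T$, hence by~\eqref{EqAdvCandidateValueFct}, $w(T,0)=C_1(T,T)\cdot 0+C_2(T,T)\cdot 0+C_3(T,T)=0$, using $C_2(T,T)=C_3(T,T)=0$ from~\eqref{AdvDiffeqAnfangsbedalt}. Similarly, Equation~\eqref{EqAdvdwdx} together with $C_2(T,T)=0$ yields $\tfrac{\partial w}{\partial x}(T,0)=0$. Next I would record the key formula
\[
\tfrac{\partial^2 w}{\partial x^2}(T,y)=2\,C_1(T,g(T,y))\qquad(y\in\mathds{R}),
\]
which is exactly what is established in the proof of Theorem~\ref{CorAdvConvexx} (in the regions $|y|<\tfrac{\gamma}{2\theta C_0(T)}$, $\tfrac{\gamma}{2\theta C_0(T)}<|y|<\bar{X}(T,0)$, and $|y|>\bar{X}(T,0)$, together with continuous extension at the interfaces); recall that in the first region $g(T,y)=T$ so $C_1(T,g(T,y))=C_0(T)$.

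Taylor's theorem with integral remainder, applied for $x>0$, then gives
\[
w(T,x)=w(T,0)+\tfrac{\partial w}{\partial x}(T,0)\,x+\int_0^x(x-y)\,\tfrac{\partial^2 w}{\partial x^2}(T,y)\,dy=2\int_0^x(x-y)\,C_1(T,g(T,y))\,dy,
\]
and similarly
\[
\xi^*(T,x)=\tfrac{1}{2\lambda}\tfrac{\partial w}{\partial x}(T,x)=\tfrac{1}{\lambda}\int_0^x C_1(T,g(T,y))\,dy.
\]
Now I would invoke~\eqref{IneqAdvValueCoeff}, which gives $C_1(T,0)\le C_1(T,g(T,y))\le C_0(T)$ for every $y\in[0,x]$. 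Integrating against $2(x-y)$ and $1/\lambda$, respectively, and using $\int_0^x(x-y)\,dy=x^2/2$, yields all the claimed upper and lower bounds. Strictness of the lower inequality comes from observing that on the nonempty interval $[0,\min(x,\tfrac{\gamma}{2\theta C_0(T)}))$ one has $g(T,y)=T$, so $C_1(T,g(T,y))=C_0(T)>C_1(T,0)$, and the integrand is strictly larger than its $C_1(T,0)$-replacement on a set of positive measure. The final inequalities $C_0(T)\le \tfrac{\lambda}{T}+\sqrt{\alpha\lambda}$ and $\tfrac{C_0(T)}{\lambda}\le \tfrac{1}{T}+\sqrt{\tfrac{\alpha}{\lambda}}$ are exactly~\eqref{IneqAdvValueCoeff} (or its division by $\lambda$).

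The case $x<0$ follows by the symmetry $w(T,-x)=w(T,x)$ and $\xi^*(T,-x)=-\xi^*(T,x)$, which is transparent from~\eqref{EqAdvCandidateValueFct} and~\eqref{EqAdvOptimalxi} since $g(T,\cdot)$ is even. The only delicate point, hence the main (minor) obstacle, is ensuring the formula for $\tfrac{\partial^2 w}{\partial x^2}$ is valid across the junction points $|x|\in\{\tfrac{\gamma}{2\theta C_0(T)},\bar{X}(T,0)\}$; but this is handled by the continuous-extension argument already carried out in the proof of Theorem~\ref{CorAdvConvexx}, so no new work is needed.
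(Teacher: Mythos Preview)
Your proposal is correct and follows essentially the same approach as the paper: both use Taylor's theorem with integral remainder from $x=0$, the identity $\tfrac{\partial^2 w}{\partial x^2}(T,y)=2C_1(T,g(T,y))$ from the proof of Theorem~\ref{CorAdvConvexx}, and then sandwich via~\eqref{IneqAdvValueCoeff}. Your write-up is in fact a bit more careful than the paper's in spelling out strictness of the lower bound and the symmetry for $x<0$.
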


\begin{proof}
Note first that 
$
w(T,0)=\frac{\partial w}{\partial x} (T,0) =0
$
by Equations~\eqref{EqAdvCandidateValueFct} 
and~\eqref{EqAdvdwdx} (recall that $g(T,0)=T$). Thus,
\begin{equation*}
w(T,x) = w(T,0) + \frac{\partial w}{\partial x} (T,0) x + \int_0^x 
	\frac{\partial^2 w}{\partial x^2} (T,y) (x-y) dy =2 \int_0^x C_1(T,g(T,y)) (x-y) dy \notag
\end{equation*}
(cf.~the proof of Theorem~\ref{CorAdvConvexx} for the second equality). Using~\eqref{IneqAdvValueCoeff}, we can directly deduce~\eqref{IneqAdvValueBound}. Furthermore,
$
\frac{\partial w}{\partial x} (T,x) = 2 \lambda \xi^*(T,x).
$
Thus,
\begin{equation*}
 2 \lambda \xi^*(T,x) =  \frac{\partial w}{\partial x} (T,0)  + \int_0^x 
	\frac{\partial^2 w}{\partial x^2} (T,y) dy =2 \int_0^x C_1(T,g(T,y)) dy,
\end{equation*}
and~\eqref{IneqAdvStrategyBound} follows from~\eqref{IneqAdvValueCoeff}.
\end{proof}

Using Gronwall's inequality, we can deduce an upper bound $X^*_T$. 

\begin{cor}  \label{PropAdvBoundTraj}
Let $T>0$, $x \in \mathds{R}$ and $t \in [0,T)$. Then
\begin{equation*}
|X_T^*(t)| \leq |x|  \exp \Big(-\int_0^t \frac{C_1(T-s,0)}{\lambda} ds\Big) 
	= |x| \exp\big(\frac{\lambda \theta}{2} t \big) 
	\frac{\sinh\big(\frac{\tilde{\theta}}{2}(T- t)\big)}
	{\sinh \big(\frac{\tilde{\theta}}{2}T \big)}.
\end{equation*}
\end{cor}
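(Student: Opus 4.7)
The idea is to decompose the evolution of $X^*_T$ into the deterministic drift between jumps of $\pi$ (on which we apply a Gronwall-type differential inequality driven by the lower bound for $\xi^*$ from Proposition~\ref{PropAdvBounds}) and the jump times (at which $|X^*_T|$ can only decrease), and then to perform the explicit integration of $C_1(T-\cdot,0)/\lambda$ to get the closed form.

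First, I would argue that the jumps of $X^*_T$ never increase $|X^*_T|$. Indeed, at a jump time $\tau$ we have $X^*_T(\tau)=X^*_T(\tau-)-\eta^*_T(\tau)$, and by Equation~\eqref{EqAdvOptimaletaneu} either $\eta^*_T(\tau)=0$ (so $|X^*_T(\tau)|=|X^*_T(\tau-)|$), or $|X^*_T(\tau-)|>\gamma/(2\theta C_0(T-\tau))$ and the jump brings $X^*_T$ to $\sgn(X^*_T(\tau-))\gamma/(2\theta C_0(T-\tau))$, which has strictly smaller absolute value. In either case $|X^*_T(\tau)|\leq|X^*_T(\tau-)|$.

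Between two consecutive jump times the process is deterministic with $\dot{X}^*_T(t)=-\xi^*_T(t)$. Since $\xi^*(T-t,y)$ and $y$ have the same sign (by Equation~\eqref{EqAdvOptimalxineu} together with $C_1,C_2>0$) and $|\xi^*(T-t,y)|\geq\frac{C_1(T-t,0)}{\lambda}|y|$ by the lower bound in~\eqref{IneqAdvStrategyBound}, working with $Y(t):=|X^*_T(t)|^2$ gives the clean inequality
\[
\tfrac{d}{dt} Y(t)=-2X^*_T(t)\xi^*_T(t)=-2|X^*_T(t)||\xi^*_T(t)|\leq -2\tfrac{C_1(T-t,0)}{\lambda}Y(t)
\]
on each interjump interval. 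Gronwall's inequality (applied to $Y$) together with the fact that jumps do not increase $Y$ then yields, by iterating over successive jump intervals, the pathwise bound
\[
|X^*_T(t)|\leq |x|\exp\Big(-\int_0^t\tfrac{C_1(T-s,0)}{\lambda}ds\Big)
\]
for all $t\in[0,T)$, giving the first inequality of the corollary.

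The equality is a direct calculus exercise: substituting the closed form~\eqref{EqAdvC1T} for $C_1(T-s,0)$ and using the antiderivative $\int\coth(au)du=\frac{1}{a}\log|\sinh(au)|$ (with the change of variables $u=T-s$) gives
\[
-\int_0^t\tfrac{C_1(T-s,0)}{\lambda}ds=\log\tfrac{\sinh(\frac{\tilde{\theta}}{2}(T-t))}{\sinh(\frac{\tilde{\theta}}{2}T)}+\tfrac{\theta}{2}t,
\]
and exponentiating yields the stated expression. The only subtle point in the whole argument is handling the interaction of the continuous drift with the jumps; working with $|X^*_T|^2$ instead of $|X^*_T|$ sidesteps any non-smoothness at $X^*_T=0$, and the monotonicity of $|X^*_T|$ across jumps makes the iteration over jump intervals immediate.
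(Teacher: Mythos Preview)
Your argument is correct and uses the same ingredients as the paper: the lower bound $|\xi^*(T-t,y)|\geq \tfrac{C_1(T-t,0)}{\lambda}|y|$ from Proposition~\ref{PropAdvBounds}, the observation that the jumps governed by $\eta^*$ in~\eqref{EqAdvOptimaletaneu} never increase $|X^*_T|$, a Gronwall-type differential inequality, and the explicit integration of $C_1(T-\cdot,0)/\lambda$ via~\eqref{EqAdvC1T}. The only organizational difference is that the paper first introduces the deterministic jump-free trajectory $\tilde X$ solving $\tilde X'(t)=-\xi^*(T-t,\tilde X(t))$, $\tilde X(0)=x$, invokes the structure of $\eta^*$ to get $|X^*_T(t)|\leq|\tilde X(t)|$ in one stroke, and then applies Gronwall once to $\tilde X$; you instead bypass $\tilde X$ and apply the differential inequality directly to $|X^*_T|^2$ on each interjump interval, patching across jumps. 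Your route is slightly more explicit (and the squaring trick neatly avoids any issue at $X^*_T=0$), while the paper's route hides the piecewise iteration inside a single comparison with $\tilde X$; substantively the two are the same proof. Your explicit computation of the exponent, yielding $\tfrac{\theta}{2}t$ rather than $\tfrac{\lambda\theta}{2}t$, is in fact the correct one.
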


\begin{proof}
We define the process $(\tilde X (t))_{t \in [0,T)}$ as the solution of the initial value problem
\begin{equation} \notag
X'(t)=-\xi^*(T-t,X(t)), \quad
X(0)=x.
\end{equation}
By the structure of the process $\eta^*$ (cf.~Equation~\eqref{EqAdvOptimaletaneu}), we have
that for all $t \in[0,T)$,
$
|X^*_T(t)| \leq |\tilde X (t)|.
$
Therefore, by Gronwall's inequality, Proposition~\ref{PropAdvBounds} and Equation~\eqref{EqAdvC1T},
\begin{equation*}
|X^*_T(t)| \leq |\tilde X (t)| \leq |x|  \exp \Big(-\int_0^t \frac{C_1(T-s,0)}{\lambda} ds\Big) 
= |x| \exp\big(\frac{\lambda \theta}{2} t \big) 
	\frac{\sinh\big(\frac{\tilde{\theta}}{2}(T- t)\big)}
	{\sinh \big(\frac{\tilde{\theta}}{2}T \big)}
\end{equation*}
as required.
\end{proof}

The bounds derived above allow us to deduce admissibility of $u^*_T$.

\begin{prop}
Let $T>0$, $x\in \mathds{R}$ and $u^*_T=(\xi^*_T,\eta^*_T)$ be as in Equations~\eqref{EqAdvOptimalxineu} and \eqref{EqAdvOptimaletaneu}. Then
$u^* \in \mathbb{A}(T,x)$.
\end{prop}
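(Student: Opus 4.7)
The plan is to verify the two conditions from Definition~\ref{DefAdmStr} separately, namely the Terminal Constraint~\eqref{EqLiqConstraint} and the Integrability Condition~\eqref{EqIntegrability}, and to rely heavily on the pathwise (deterministic) upper bound on $|X^*_T(t)|$ provided by Corollary~\ref{PropAdvBoundTraj} together with the explicit expression for $C_0$.

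First I would dispose of the terminal constraint. By Corollary~\ref{PropAdvBoundTraj},
\[
|X^*_T(t)| \leq |x| \exp\bigl(\tfrac{\lambda\theta}{2} t\bigr)\,
\frac{\sinh\bigl(\tfrac{\tilde\theta}{2}(T-t)\bigr)}{\sinh\bigl(\tfrac{\tilde\theta}{2}T\bigr)}
\qquad \text{for all } t\in[0,T),
\]
pathwise. The right-hand side is deterministic and tends to $0$ as $t\to T-$, since $\sinh(\tfrac{\tilde\theta}{2}(T-t))\to 0$, which immediately yields $\lim_{t\to T-} X^*_T(t)=0$ almost surely.

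Next I would verify that $J(T,x,u^*_T)<\infty$ by showing each of the three non-negative integrands $\lambda\xi^*_T(t)^2$, $\gamma|\eta^*_T(t)|$ and $\alpha X^*_T(t)^2$ is uniformly bounded on $[0,T)$ by a deterministic constant. For the state term this is the displayed bound above. For the jump control, Equation~\eqref{EqAdvOptimaletaneu} gives $|\eta^*_T(t)|\leq |X^*_T(t-)|$, which inherits the same uniform bound. The main obstacle is the absolutely continuous control, where both factors in the estimate
\[
|\xi^*_T(t)| \leq \frac{C_0(T-t)}{\lambda}\,|X^*_T(t-)|
\]
(from Proposition~\ref{PropAdvBounds}) degenerate as $t\to T-$: $C_0(T-t)\sim \lambda/(T-t)$ while $|X^*_T(t-)|\sim (T-t)$ by Corollary~\ref{PropAdvBoundTraj}. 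The point is that the closed-form expression $C_0(T-t)=\sqrt{\lambda\alpha}\coth(\sqrt{\alpha/\lambda}(T-t))$ together with the explicit sinh-ratio for $X^*_T$ shows these singularities cancel exactly, yielding a uniform deterministic bound $|\xi^*_T(t)|\leq M$ on $[0,T)$ for some constant $M=M(T,x,\lambda,\alpha,\theta)$; this is where the availability of closed-form formulae is essential.

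Combining the three uniform bounds, the integrand in $J(T,x,u^*_T)$ is bounded by a deterministic constant on the compact interval $[0,T]$, so $J(T,x,u^*_T)<\infty$. Together with the terminal constraint established above, this proves $u^*_T\in\mathbb{A}(T,x)$.
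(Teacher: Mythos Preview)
Your proposal is correct and follows essentially the same route as the paper: you use Corollary~\ref{PropAdvBoundTraj} for the terminal constraint and the state bound, the trivial estimate $|\eta^*_T(t)|\leq|X^*_T(t-)|$ for the jump control, and the crucial observation that the singularity of $C_0(T-t)$ at $t\to T-$ is exactly compensated by the $\sinh$-factor in the pathwise bound on $X^*_T$, so that $|\xi^*_T|$ stays bounded. The paper does the same, only phrasing the last step as ``the explicit product converges to a finite limit as $t\to T-$'' rather than ``is uniformly bounded'', and it verifies the terminal constraint at the end rather than at the beginning; neither difference is substantive.
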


\begin{proof}
Let $t\in [0,T)$. By Proposition~\ref{PropAdvBounds}, Corollary~\ref{PropAdvBoundTraj} and 
Equation~\eqref{EqValFuncAdvBelow}  we have
\begin{align*}
|\xi^*_T(t)| &= |\xi^*(T-t,X^*_T(t))|  \leq \frac{C_0(T-t)}{\lambda} |X_T^*(t)| 
\leq |x| \sqrt{\frac{\alpha}{\lambda}}  \exp\big(\frac{\lambda \theta}{2} t \big) 
	\frac{\sinh\big(\frac{\tilde{\theta}}{2}(T- t)\big)}
	{\sinh \big(\frac{\tilde{\theta}}{2}T \big)}
	\frac{\cosh \big(\sqrt{\frac{\alpha}{\lambda}} (T-t) \big)}
	{\sinh \big( \sqrt{\frac{\alpha}{\lambda}} (T-t) \big)} \\
& \longrightarrow |x| \sqrt{\frac{\alpha}{\lambda}} \exp\big(\frac{\lambda \theta}{2} T \big) 
	\frac{1}{\sinh \big(\frac{\tilde{\theta}}{2}T \big)}
	< \infty \quad \text{as } t \rightarrow T-.
\end{align*}
Thus,
$
\mathbb{E} [\int_0^T \xi_T^*(t)^2 dt ] < \infty.$ By Corollary~\ref{PropAdvBoundTraj}, we have $
\mathbb{E} [\int_0^T X_T^*(t)^2 dt ] < \infty$ and as $|\eta^*_T(t)| < |X_T^*(t)|$, we have
$
\mathbb{E} [\int_0^T |\eta_T^*(t)| ds ] < \infty$, i.e., $J(T,x,u_T^*)< \infty$. Finally, again by Corollary~\ref{PropAdvBoundTraj},
$
\lim_{t \rightarrow T-} X^*_T(t) = 0.
$
\end{proof}

\subsection{Verification} \label{SubSecVerificationAdv}

We are now finally able to prove that the Optimization Problem~\eqref{EqValueFctAdv}
is solved by the candidate optimal control in Equations~\eqref{EqAdvOptimalxineu} and~\eqref{EqAdvOptimaletaneu}
and that the value function is given as in Equation~\eqref{EqAdvCandidateValueFct}.

\begin{theorem} \label{TheoremAdvOptLiq}
The value function of the Optimization Problem~\eqref{EqValueFctAdv} is given by 
\begin{equation} \notag
v(T,x)=
C_1(T,g(T,x))x^2 + C_2(T,g(T,x)) |x| + C_3(T,g(t,x))
\end{equation}
for $T>0$, $x \in \mathds{R}^n$
(cf.~Equation~\eqref{EqAdvCandidateValueFct})
where $C_1$, $C_2$, $C_3$ and $g$ are given as in Section~\ref{SecPropValAdv}.
The $\mathbb{P} \otimes \boldsymbol \lambda$ - a.s. unique optimal control is given by 
$u^*_T=(\xi^*_T,\eta^*_T)$ as in~Equations~\eqref{EqAdvOptimalxineu} and~\eqref{EqAdvOptimaletaneu}, respectively.\footnote{Here, $\boldsymbol \lambda$ denotes the Lebesque measure on $[0,T]$.}
\end{theorem}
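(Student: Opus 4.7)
The plan is to apply a standard verification argument based on Itô's formula and the HJB Equation~\eqref{EqHJBAdv}, with the main technical work occurring in the passage to the terminal time $T$ where $w$ is singular.

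First, for arbitrary $u=(\xi,\eta) \in \mathbb{A}(T,x)$ and $t \in [0,T)$, I would apply the Itô formula for jump processes to $w(T-s,X^u(s))$ on $[0,t]$ using the dynamics~\eqref{EqCSDE}; this is legitimate by Theorem~\ref{TheoremAdvContDiffw} ($w$ is $C^1$, and the second partial in $x$ exists except on a Lebesgue null set in $x$, which is irrelevant for the absolutely continuous part of $X^u$). After compensating the Poisson integral by $\theta$, one obtains
\begin{equation*}
w(T-t,X^u(t)) = w(T,x) + \int_0^t G(s)\,ds + M_t,
\end{equation*}
where $M$ is a local martingale and
\begin{equation*}
G(s) = -\tfrac{\partial w}{\partial T}(T-s,X^u(s)) - \xi(s)\tfrac{\partial w}{\partial x}(T-s,X^u(s)) + \theta\bigl(w(T-s,X^u(s)-\eta(s)) - w(T-s,X^u(s))\bigr).
\end{equation*}
By Theorem~\ref{CorAdvHJBMain} (the HJB equation holds with pointwise unique minimizer $u^*$), we have $G(s) \geq -f(\xi(s),\eta(s),X^u(s))$, with equality iff $(\xi(s),\eta(s))=(\xi^*(T-s,X^u(s-)),\eta^*(T-s,X^u(s-)))$.

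Next, I would localize $M$ by stopping times $\tau_n \uparrow T$ (using that $w\geq 0$, so $M_{\cdot \wedge \tau_n}$ can be made a true martingale by a standard sequence, e.g., combined with the integrability of $f(\xi,\eta,X^u)$ from $u \in \mathbb{A}(T,x)$). Taking expectations yields
\begin{equation*}
\mathbb{E}\bigl[w(T-\tau_n,X^u(\tau_n))\bigr] + \mathbb{E}\Bigl[\int_0^{\tau_n} f(\xi(s),\eta(s),X^u(s))\,ds\Bigr] \geq w(T,x),
\end{equation*}
with equality when $u=u^*_T$. Letting $n \to \infty$, the cost term converges to $J(T,x,u)$ by monotone convergence since $f \geq 0$; the remaining task is to show that the boundary term $\mathbb{E}[w(T-\tau_n,X^u(\tau_n))]$ vanishes in the limit.

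This is the main obstacle and the reason the closed form representation was essential: $w$ is singular as $T\to 0+$ (indeed $w(0,y)=\infty$ for $y\neq 0$), yet the terminal constraint forces $X^u(t)\to 0$, and these two effects must balance. Here I would exploit the quadratic upper bound from Proposition~\ref{PropAdvBounds},
\begin{equation*}
0 \leq w(T-t,X^u(t)) \leq \Bigl(\tfrac{\lambda}{T-t} + \sqrt{\alpha\lambda}\Bigr) X^u(t)^2,
\end{equation*}
which reduces the problem to showing $\mathbb{E}[X^u(t)^2/(T-t)] \to 0$ as $t \to T-$. For $u=u^*_T$ this is immediate from Corollary~\ref{PropAdvBoundTraj}, since $\sinh^2(\tilde\theta(T-t)/2)/(T-t) \to 0$. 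For a general admissible $u$, this is precisely the content of the auxiliary Lemma~\ref{LemLimitwforallu} alluded to in Section~\ref{SubSecVerificationAdv}: one combines the a.s.~convergence $X^u(t)\to 0$ with the integrability $\mathbb{E}\int_0^T \alpha X^u(s)^2\,ds<\infty$ (and the control on $\xi$ given by $\mathbb{E}\int_0^T \lambda\xi(s)^2\,ds<\infty$) to conclude, via a careful application of dominated/monotone convergence along a suitably chosen stopping sequence, that the boundary term vanishes.

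Combining these steps yields $w(T,x) \leq J(T,x,u)$ for every $u \in \mathbb{A}(T,x)$, with equality for $u=u^*_T$; hence $v(T,x)=w(T,x)=J(T,x,u^*_T)$. For $\mathbb{P}\otimes\boldsymbol{\lambda}$-a.s.~uniqueness, I would note that if $\tilde u$ is any other optimal control, then the inequality $G(s) \geq -f(\xi(s),\eta(s),X^{\tilde u}(s))$ must be an equality $\mathbb{P}\otimes\boldsymbol{\lambda}$-a.e.~on $\Omega\times[0,T]$; by the strict convexity of $h(T,x,\cdot,\cdot)$ established in Lemma~\ref{lemAdvHJBLemma} (which relies on Theorem~\ref{CorAdvConvexx}), the HJB minimizer is unique, so $(\xi(s),\eta(s))=(\xi^*(T-s,X^{\tilde u}(s-)),\eta^*(T-s,X^{\tilde u}(s-)))$ a.e., forcing $\tilde u = u^*_T$ a.e.
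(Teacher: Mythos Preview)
Your verification argument matches the paper's in structure: It\^o's formula, the HJB inequality from Theorem~\ref{CorAdvHJBMain}, and the boundary term handled via the quadratic bound in Proposition~\ref{PropAdvBounds} together with Lemma~\ref{LemLimitwforallu}. Two points of divergence are worth noting.

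First, the paper does not localize. It restricts attention at the outset to controls for which $X^u$ is monotone (any admissible control can be improved to such a control), so that $|X^u|\le |x|$ and all moments of $X^u$ are finite; the compensated Poisson integral is then a genuine martingale on each $[0,t]$, $t<T$, and one lets deterministic $t\to T-$ and invokes Lemma~\ref{LemLimitwforallu} directly. Your localization by stopping times $\tau_n$ is workable, but the lemma is stated and proved for deterministic times; if your $\tau_n$ are genuinely random you would either need to extend the lemma or first delocalize on $[0,t]$ and only afterwards send $t\to T-$. Incidentally, the lemma is \emph{not} proved by a bare dominated/monotone convergence argument as you suggest: it uses the dynamic-programming lower bound $J(T,x,u)\ge\mathbb{E}\big[\int_0^t f\,ds\big]+\mathbb{E}[v(T-t,X^u(t))]$ together with $v(T-t,y)\ge K y^2/(T-t)$ to squeeze out the boundary term.

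Second, your uniqueness argument differs from the paper's and has a small gap. From equality in the HJB step you correctly deduce that any optimal $\tilde u$ satisfies $(\tilde\xi(s),\tilde\eta(s))=(\xi^*(T-s,X^{\tilde u}(s-)),\eta^*(T-s,X^{\tilde u}(s-)))$ a.e.; but this is the feedback applied to $X^{\tilde u}$, not to $X^*_T$, so concluding $\tilde u=u^*_T$ still requires pathwise uniqueness for the closed-loop dynamics (which holds here, but must be argued). The paper bypasses this by working at the level of the cost functional: $f$ is convex, and strictly convex in $\xi$; two admissible controls that differ on a set of positive $\mathbb{P}\otimes\boldsymbol\lambda$-measure must have $\xi$-components that differ (otherwise the terminal values $\lim_{t\to T-}X^u(t)$ and $\lim_{t\to T-}X^{\tilde u}(t)$ would disagree with positive probability, contradicting~\eqref{EqLiqConstraint}), and strict convexity in $\xi$ then gives $J(T,x,\mu u+(1-\mu)\tilde u)<\mu J(T,x,u)+(1-\mu)J(T,x,\tilde u)$, ruling out two distinct optimizers.
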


For the proof of the theorem, we require the following lemma, which relies on the fact that
the value function $v$ is an upper bound for the value function for $\gamma=0$ given in Equation~\eqref{EqValFuncAdvBelow}.

\begin{lem} \label{LemLimitwforallu}
Let $T>0$, $x\in \mathds{R}$ and $u \in \mathbb{A}(T,x)$. Then
\[
\lim_{t \rightarrow T-}\mathbb{E} \big[ w(T-t,X^u(t)) \big] =0
.
\]
\end{lem}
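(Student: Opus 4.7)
The plan is to combine the upper bound from Proposition~\ref{PropAdvBounds} with a lower bound on the remaining cost that comes from the linear-quadratic ($\gamma=0$) value function of~\cite{Kratz2012}. First I would apply Proposition~\ref{PropAdvBounds} to write, for $t \in [0,T)$,
\[
0 \leq w(T-t,X^u(t)) \leq C_0(T-t)(X^u(t))^2,
\]
so that it suffices to control $C_0(T-t)\mathbb{E}[(X^u(t))^2]$.

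Second, I would invoke the $\gamma=0$ benchmark. Fix $t \in [0,T)$ and consider the control $u|_{[t,T)}$ as a control on the horizon $[0,T-t)$ starting from state $X^u(t)$ (which is $\mathcal{F}_t$-measurable). This control is admissible for the $\gamma=0$ version of~\eqref{EqValueFctAdv}: the terminal constraint holds because $u\in\mathbb{A}(T,x)$ satisfies~\eqref{EqLiqConstraint}, and the finiteness of cost follows from $J(T,x,u)<\infty$ since dropping $\gamma|\eta|$ only decreases the integrand. By the explicit form of the $\gamma=0$ value function recalled in~\eqref{EqOptNODP}--\eqref{Eq1dimesionalvaluefct}, conditioning on $\mathcal{F}_t$ gives
\[
C(T-t)(X^u(t))^2 \leq \mathbb{E}\Big[\int_t^T \bigl(\lambda \xi(s)^2+\alpha X^u(s)^2\bigr)ds \,\Big|\, \mathcal{F}_t\Big] \leq \mathbb{E}\Big[\int_t^T f(\xi,\eta,X^u)\,ds \,\Big|\, \mathcal{F}_t\Big].
\]
Taking expectations and combining with the first bound yields
\[
\mathbb{E}[w(T-t,X^u(t))] \leq \frac{C_0(T-t)}{C(T-t)}\,\mathbb{E}\Big[\int_t^T f(\xi,\eta,X^u)\,ds\Big].
\]

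Third, I would pass to the limit $t\to T-$. The right-hand factor $\mathbb{E}[\int_t^T f\,ds]$ tends to $0$ by monotone convergence, since $f\geq 0$, $J(T,x,u)<\infty$, and $\mathbb{E}[\int_0^t f\,ds]\uparrow J(T,x,u)$. For the prefactor, a Taylor expansion of $\coth$ at the origin shows that both $C_0(S)$ and $C(S)$ behave like $\lambda/S$ as $S\to 0+$, so $C_0(S)/C(S)\to 1$; in particular the ratio stays bounded on $(0,T]$, and the product vanishes, giving the claim.

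The main obstacle is the somewhat subtle DPP-style step of interpreting $u|_{[t,T)}$ as an admissible control for the shifted $\gamma=0$ problem and asserting the pointwise inequality $v_{\gamma=0}(T-t,X^u(t))\leq \mathbb{E}[\int_t^T(\lambda\xi^2+\alpha X^{u,2})ds\mid \mathcal{F}_t]$; this is standard given the strong Markov property of $\pi$ and the fact that the closed-form $C(T-t)y^2$ is jointly measurable in $(t,y)$, but it must be invoked explicitly so that after taking $\mathcal{F}_t$-expectations the bound holds $\omega$-wise. Everything else reduces to the elementary asymptotics of $C_0/C$ at the singularity and monotone convergence.
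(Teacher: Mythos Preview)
Your proposal is correct and follows essentially the same route as the paper: bound $w$ above via Proposition~\ref{PropAdvBounds}, bound the remaining cost below by the $\gamma=0$ value function, and conclude by monotone convergence together with the $\lambda/S$ asymptotics near the singularity. The only cosmetic difference is that the paper replaces your ratio $C_0(T-t)/C(T-t)$ by the pair of one-sided estimates $C_0(S)\leq \lambda/S+\sqrt{\alpha\lambda}$ and $C(S)\geq K/S$, and phrases the DPP step through $v$ rather than through the conditional expectation; the substance is identical.
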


\begin{proof}
Note first that there exists a constant $K>0$ such that
\begin{equation} \label{IneqCostsMult}
\frac{K}{T} x^2 \leq C(T) x^2  \leq v(T,x),
\end{equation}
where $C(T)$ is as in Equation~\eqref{Eq1dimesionalvaluefct}.
A control $u \in \mathbb{A}(T,x)$ has finite costs $J(T,x,u)< \infty$ 
by Definition~\ref{DefAdmStr}. Let now $t\in (0,T)$.
Then
\begin{align*}
J(T,x,u) &= 
	\mathbb{E} \Big[\int_0^{t}\Big( 
	\lambda \xi(s)^2 + \gamma |\eta(s)|
	+\alpha X^{u}(s)^2 \Big) ds \Big]  + \mathbb{E} \Big[\int_{t}^T 
	\Big( \lambda \xi (s)^2 + \gamma |\eta (s)|
	+\alpha X^{u }(s)^2 \Big) ds \Big] \\
&\geq \mathbb{E} \Big[\int_0^{t} 
	\Big( \lambda \xi (s)^2 + \gamma |\eta (s)|
	+\alpha X^{u }(s)^2 \Big) ds + v(T-t,X^{u }(t)) \Big] \\
&\overset{\eqref{IneqCostsMult}}{\geq} \mathbb{E} \Big[\int_0^{t} 
	\Big( \lambda \xi (s)^2 + \gamma |\eta (s)|
	+\alpha X^{u }(s)^2 \Big) ds \Big]+ K
	\mathbb{E}  \Big[ \frac{X^{u }(t)^2}{T-t} \Big].
\end{align*}
Thus by the monotone convergence theorem,
\begin{align*}
J(T,x,u) &\geq
	\limsup\limits_{t \rightarrow T-} \Big( \mathbb{E} \Big[\int_0^{t} 
	\Big( \lambda \xi (s)^2 + \gamma |\eta (s)|
	+\alpha X^{u }(s)^2\Big) ds \Big]
	+ K \mathbb{E}  \Big[ \frac{X^{u }(t)^2}{T-t} \Big] \Big) \\
&= J(T,x,u) 
	+ K \limsup\limits_{t \rightarrow T-} \mathbb{E}  \Big[ \frac{X^{u }(t)^2}{T-t} \Big];
\end{align*}
hence
$
\lim_{t \rightarrow T-} \mathbb{E}  \big[ \tfrac{X^{u }(t)^2}{T-t} \big]=0.
$
The assertion now follows directly from Proposition~\ref{PropAdvBounds} (cf.~Inequality~\eqref{IneqAdvValueBound}).
\end{proof}

\begin{proof}[Proof of Theorem~\ref{TheoremAdvOptLiq}]
Let $T>0$ and $x \in \mathds{R}$. We can restrict our attention to those controls $u \in \mathbb{A}(T,x)$ such that the controlled process is almost surely monotone on $[0,T]$; in particular, the sign of the controlled process is (almost surely) not changed during the time horizon $[0,T]$ (cf.~Equation~\eqref{EqLiqConstraint}).\footnote{It can easily been seen that else there exists a less costly control such that the controlled process is almost surely monotone.} As a consequence, we have for all $k \in \mathds{N}$
\begin{equation} \label{EqRestrictAdm}
\mathbb{E} \Big[ \int_0^T |\xi(t)|^k + |\eta(t)|^k + |X^u(t)|^k dt \Big] < \infty.
\end{equation}

We now let $t \in [0,T)$ and apply It\^{o}'s formula to the function 
$w(T-\cdot,X^u(\cdot))$ (recall that $w$ is differentiable by Theorem~\ref{TheoremAdvContDiffw}):
\begin{align}
w(T,x) &= w(T-t,X^u(t)) + \int_0^{t} \Big(\frac{\partial w}{\partial T} (T-s,X^u(s))+ 
	\frac{\partial w}{\partial x}(T-s,X^u(s)) \xi(s) \Big)ds \notag \\*
& \qquad +\int_0^{t} \big( w(T-s,X^u(s-))
		-w (T-s, X^u(s-)- \eta(s) )\big) \pi(ds) \notag \\
& \leq w(T-t,X^u(t)) +\int_0^{t} f(\xi(s),\eta(s),X^u(s)) ds  \notag \\*
&\qquad -\int_0^{t} \theta \big( w(T-s,X^u(s-)) -w (T-s, X^u(s-)- \eta(s) ) \big)  ds  \notag \\*
& \qquad  + \int_0^{t} \big( w(T-s,X^u(s-))
		-w (T-s, X^u(s-)- \eta(s) ) \big) \pi(ds) \label{InEqAdvHJB} 	\\
& =  w(T-t,X^u(t)) + \int_0^{t} f(\xi(s),\eta(s),X^u(s)) ds \notag \\*
&\qquad +\int_0^{t}	\big( w(T-s,X^u(s-)) -w (T-s, X^u(s-)- \eta(s) ) \big)  M(ds),  \label{InEqAdvHJB2}
\end{align}
where $M$ is the compensated Poisson process given by $M(s):=\pi(s)-\theta s$ and Inequality~\eqref{InEqAdvHJB} follows from Theorem~\ref{CorAdvHJBMain}, with
equality if and only if
$
\boldsymbol\lambda[u=u^*_T]=0.
$
Taking expectations on both sides, we obtain 
\begin{align}
w(T-t,x) &\leq \mathbb{E} [w(T-t,X^u(t)) ] 
	+\mathbb{E} \Big[ \int_0^{t} f(\xi(s),\eta(s),X^u(s)) ds \Big] \notag \\
&\qquad  +\mathbb{E}\Big[
	\int_0^{t} w(T-s,X^u(s-)) -w (T-s, X^u(s-)-\eta(s) ) M(ds) \Big], 
	\label{IneqAdvItoIntMart}
\end{align}
with equality if and only if
$\mathbb{P} \otimes \boldsymbol \lambda [u=u^*_T] = 0.$ 
By Proposition~\ref{PropAdvBounds}, there exists a constant $K=K(t)$ such that
\begin{align}
& \mathbb{E} \Big[\int_0^t |w(T-s,X^u(s-))
	-w \big( T-s, X^u(s-) - \eta(s) \big) |^2 ds \Big]  \leq K(t)\mathbb{E}\Big[\int_0^t |X^u(s-)|^4 
	 ds \Big]  
 <\infty \notag
\end{align}
(cf.~the discussion above). As $\langle M\rangle (s) = \theta s$, this implies that the stochastic integral in Equation~\eqref{InEqAdvHJB2}
is a martingale.
Thus, taking the limit $t \rightarrow T$ in Inequality~\eqref{IneqAdvItoIntMart}, 
we obtain by Lemma~\ref{LemLimitwforallu} and the monotone convergence theorem that
\[
w(T,x) 
 \leq  \mathbb{E} [ \int_0^{T} f(\xi(s),\eta(s),X^u(s)) ds ]
=J(T,x,u),
\]
again with equality if 
$ \mathbb{P} \otimes \boldsymbol \lambda[u=u^*_T] = 0.$

For uniqueness, let $u=(\xi,\eta),\tilde{u}=(\tilde{\xi},\tilde{\eta}) \in \mathbb{A}(T,x)$
and $\mu\in (0,1)$. We define the convex combination $\bar{u}=(\bar{\xi},\bar{\eta})$:
\[
\bar{\xi}(t) =\mu \xi(t) + (1-\mu) \tilde{\xi}(t), \quad 
\bar{\eta}(t) =\mu \eta(t) + (1-\mu) \tilde{\eta}(t)
\]
for $t \in [0,T)$. Thus,
$
X^{\bar{u}}(t) = \mu X^u(t) + (1-\mu) X^{\tilde{u}}(t)
$
and $\bar{u} \in \mathbb{A}(T,x)$.
Notice that 
\begin{equation} \label{ImplicationEqual}
\mathbb{P}\otimes\boldsymbol \lambda\big[u \not= \tilde{u}\big]>0
\quad \text{implies} \quad \mathbb{P}\otimes\boldsymbol \lambda \big[\xi \not= \tilde{\xi}\big] >0
\end{equation}
as else
$
\mathbb{P}[ \lim_{t\rightarrow T-} X^u(t)
	\not=  \lim_{t\rightarrow T-} X^{\tilde{u}}(t) ]>0,
$
a contradiction to Definition~\ref{DefAdmStr}.
Hence by the convexity of $f$,
\begin{align*}
J(T,x,\bar{u}) &= \mathbb{E} \Big[ \int_0^T 
		f\big(\bar{\xi}(t),\bar{\eta}(t), X^{\bar{u}}(t) \big) dt \Big] \\
	&\leq \mathbb{E} \Big[ \int_0^T 
		\mu f\big( \xi(t), \eta(t) , X^u(t)\big) + (1-\mu) f \big(\tilde{\xi}(t),\tilde{\eta}(t), X^{\tilde{u}}(t) \big) dt \Big] 
	= \mu J(T,x,u) + (1-\mu) J(T,x, \tilde{u});
\end{align*}
by the strict convexity of
$f$ in the first argument and~\eqref{ImplicationEqual}, we have equality if and only if $u = \tilde{u}$ $\mathbb{P}\otimes\boldsymbol \lambda$ - a.s.
\end{proof}

\section{Optimal liquidation in dark pools with adverse selection} \label{SecAdvProperties}

In this section, we apply the solution of the Optimization Problem~\eqref{EqValueFctAdv} to a cost minimization problem arising in the context of optimal portfolio liquidation if a large investor has access both to a classical exchange and to a dark pool with adverse selection (cf.~the discussion in the introduction). In Section~\ref{ModelAdverse}, we describe the market model and show how Theorem~\ref{TheoremAdvOptLiq} applies to the model. In Section~\ref{PropAdv}, we discuss the properties of the solution of the Optimization Problem~\eqref{EqValueFctAdv} with regard to the application.

\subsection{Model description} \label{ModelAdverse}

Our model for trading and price formation at the classical exchange is a linear price impact model. Trade execution can be enforced by selling aggressively, which however results in quadratic execution costs due to a stronger price impact. We model order execution in the dark pool by a Poisson process. Orders submitted to the dark pool are executed at the jump times of Poisson processes; however, these orders result in adverse selection costs. The split of orders between dark pool and exchange is thus driven by the trade-off between execution uncertainty, price impact costs and adverse selection costs. Our model is a generalization of the single-asset version of the model of~\cite{Kratz2012} who neglect adverse selection.

In the following, we first specify the transaction prices and the trade execution in the primary venue and the dark pool (Section~\ref{SubSecModelPrimaryEx}); in this context, we also specify how the two venues are connected via adverse selection. Subsequently, we define admissible liquidation strategies (Section~\ref{SubSecModelStrategies}). Finally, we specify the proceeds of liquidating the position and show how Theorem~\ref{TheoremAdvOptLiq} applies to the market model in Section~\ref{SubSecModelCosts}.

\subsubsection{Transaction prices and trade execution} \label{SubSecModelPrimaryEx}

For a fixed time interval $[0,T]$, we consider the stochastic basis $(\Omega, \mathcal{F}, \mathbb{P}, \mathbb{F}=(\mathcal{F}_t)_{t \in[0,T]})$\footnote{The filtration is generated by the involved random processes and is specified after Assumption~\ref{AssPrimary}.}. 

In absence of transactions of the investor, the fundamental asset price at the primary exchange is given by a stochastic process $\tilde{P}$. We assume that $\tilde{P}$ is connected to the liquidity in the dark pool through adverse selection. We model this liquidity by Poisson processes \[
\pi_1 \quad \text{respectively} \quad \pi_2 \quad \text{ 
with the same intensity} \quad
\theta > 0:
\] 
sell respectively buy orders (i.e., positive respectively negative orders) in the dark pool are executed \emph{fully} at the jump times of $\pi_1$ respectively $\pi_2$; else, the orders are not executed at all. This simplification allows a thorough mathematical analysis of the model. On the other hand, the resulting model captures the stylized facts of dark pools outlined in the introduction. We assume the following relation between the fundamental price at the exchange, $\tilde{P}$, and the dark pool liquidity, $(\pi_1,\pi_2)$.

\begin{assumption} \label{AssPrimary}
\begin{enumerate}
\item[(i)]
Let $\bar{P}$ be a square-integrable positive c\`{a}dl\`{a}g martingale such that
the variance of $\bar{P}$ is constant in time, i.e., for all $t \in[0,T]$,
$
\var ( \bar{P}(t) ) =t \sigma^2
$
for $\sigma^2\geq 0$.
Then, $\tilde{P}$ is given by
\[
\tilde{P}(t) = \bar{P}(t) +  \Gamma \big( \Delta \pi_1(t) - \Delta \pi_2(t) \big),
\]
where $\Gamma > 0$.
\item[(ii)]
$\pi_1$, $\pi_2$ and $\bar{P}$ are independent.
\end{enumerate}
\end{assumption}

We are now ready to specify the filtration $(\mathcal{F}_t)_t$ as the completion of $\big(\sigma\big( \bar{P}(s), \pi_1(s),\pi_2(s) | 0 \leq s \leq t \big)\big)_t.$

Assumption~\ref{AssPrimary} captures the nature of adverse selection in a rather direct way: whenever there is liquidity in the dark pool (i.e., the respective Poisson process $\pi_i$ jumps), there is a favorable jump of the fundamental asset price $\tilde{P}$ of size $\Gamma$. As the intensities of $\pi_1$ and $\pi_2$ are equal, the resulting process is still a c\`{a}dl\`{a}g martingale. Positivity of $\tilde{P}$ cannot be ensured any more. Mathematically this is irrelevant as we shall see below. For the application we have in mind, time horizons are in general short and for appropriate parameters the probability that the price becomes negative is negligible. The idea to incorporate adverse selection by constructing the price process as in~Assumption~\ref{AssPrimary} is due to~\cite{Naujokat2011}. 

Once the trader becomes active at the primary exchange, she influences the market price $P$. We assume that the trader at the primary exchange can only execute trades continuously, i.e., that her trading activity on the primary exchange is absolutely continuous and can hence be described by her trading intensity $\xi(t)$ with $t \in [0,T)$. This trading in the traditional exchange generates price impact, which we assume to be temporary and linear in the trading rate $\xi(t)$. Given a strategy $(\xi(t))_{t\in [0,T)}$, the transaction price at time $t \in [0,T]$ is given by 
\[
P(t) = \tilde{P}(t) - \lambda \xi(t), \quad \text{where} \quad
\lambda >0.
\]
By assuming linear price impact for the primary venue, we follow~\cite{Almgren2001}. This choice yields a tractable model which nevertheless captures price impact effects. Linear price impact models have become the basis of several theoretical studies, e.g.,  \cite{Almgren2007}, \cite{Carlin2007}, \cite{Schoneborn2007} and~\cite{Rogers2010}. 

We allow for continuous updating of the orders $\eta(t)$ in the dark pool at any time $t \in [0,T]$.
While the dark pool has no impact on prices at the primary venue, it is less clear to which extent the price
impact of the primary venue $\lambda \xi(t)$ is reflected in the trade price of the dark pool. If for example the price impact is realized predominantly in the form of a widening spread, then the impact on dark pools that monitor the
mid quote can be much smaller than $\lambda \xi(t)$. We make the simplifying assumption that trades in the dark pool are not influenced by the price impact at all, i.e., that they are executed at the fundamental
price $\tilde{P}$ (more precisely, $\tilde{P}_{-}$; note that this distinction is irrelevant for the absolutely continuous strategy $\xi$ at the exchange).
If alternatively the transaction price in the dark pool is the price $P$ at the primary exchange including the trader's price impact, market manipulation strategies can become profitable unless the parameters are chosen with great care, as has been shown by~\cite{Kratz2010} for the discrete-time case. For a detailed discussion see also~\cite{Kloeck2011} who analyze the circumstances that can lead to price manipulation in dark pools.

\subsubsection{Admissible liquidation strategies} \label{SubSecModelStrategies}

We investigate an investor who has to liquidate an asset position $x \in \mathds{R}$ within a finite trading horizon $[0,T]$. Given a trading strategy $u=(\xi,\eta)$, the asset holdings of the investor at time $t\in [0,T)$ are given by (cf.~the analogy to Equation~\eqref{EqCSDE})

\begin{equation} \label{EqAssetHoldings}
X^u(t):=  x- \int_0^t \xi(s) ds -  \int^t_0\mathds{1}_{\{\eta(s)>0\}} \eta(s) d\pi_1(s) - \int^t_0\mathds{1}_{\{\eta(s)<0\}} \eta(s) d\pi_2(s)
\end{equation}

Similarly as in Section~\ref{ChaptPortfolioSecModel}, a trading strategy $u=(\xi,\eta)$ is \emph{admissible} ($\in \mathbb{A}(T,x)$) if $\xi$ is progressively measurable, $\eta$ is predictable, the liquidation costs $J(T,x,u)$ (defined by Equation~\eqref{EqCostsDP} below) are finite and the position is liquidated by time $T$:
\begin{equation} \label{EqLiqConstraintDP}
\lim\limits_{t \rightarrow T-} X^u(t)= 0 \quad \text{a.s.}
\end{equation}
(cf.~Definition~\ref{DefAdmStr}).

\subsubsection{Trading proceeds/liquidation costs} \label{SubSecModelCosts}

The proceeds of selling\footnote{In the following, we say \emph{selling} for both positive and negative orders.} the portfolio $x \in \mathds{R}$ during $[0,T]$ according to the strategy
$(u(t))_t=(\xi(t),\eta(t))_t \in \mathbb{A}(T,x)$ are given by
\[
\phi(T,x,u) := \int_0^T \xi(t)(\tilde{P}(t-) - \lambda \xi(t))dt 
+ \int_0^T \mathds{1}_{\{\eta(t)>0\}} \eta(t) \tilde{P}(t-)  d\pi_1(t) 
+ \int_0^T \mathds{1}_{\{\eta(t)<0\}} \eta(t) \tilde{P}(t-)  d\pi_2(t).
\]
The first term in the above equation represents the proceeds of selling at the primary exchange at a price of $P(t-) = \tilde{P}(t-) - \lambda \xi(t)$, while the second and the third term accounts for the proceeds of selling in the dark pool at the unaffected price $\tilde{P}(t-)$. Applying integration by parts and using the fact that $X^u$ satisfies Equation~\eqref{EqAssetHoldings} (cf.~also the Liquidation Constraint~\eqref{EqLiqConstraintDP}), we obtain
\begin{equation} \label{EqAdvStochInt1}
\phi(T,x,u) = -\int_0^T \lambda \xi(t)^2 dt + x^\top \tilde{P}(0) + \int_0^T X^u(t-) d\tilde{P}(t)
+\sum\limits_{0\leq t \leq T} \Delta X^u(t) \Delta \tilde{P}(t) .
\end{equation}
For the compensated Poisson processes $M_i(t):=\pi_i(t) - \theta t$, Assumption~\ref{AssPrimary} implies the following form for the last summand.
\begin{align} 
\sum\limits_{0\leq t \leq T} \Delta X^u(t) \Delta \tilde{P}(t) 
	&=- \Gamma \Big(\int_0^T \mathds{1}_{\{\eta(t)>0\}} \eta(t)  d\pi_1(t) 
		+ \int_0^T \mathds{1}_{\{\eta(t)<0\}} \eta(t) d\pi_2(t) \Big) \notag \\
	&=- \theta \Gamma  \int_0^T | \eta(t) |   dt
		- \Gamma \Big(\int_0^T \mathds{1}_{\{\eta(t)>0\}} \eta(t)  dM_1(t) 
		+ \int_0^T \mathds{1}_{\{\eta(t)<0\}} \eta(t) dM_2(t) \Big). \label{EqAdvStochInt2}
\end{align}
For appropriate integrability assumptions on $u$, the stochastic integrals in Equations~\eqref{EqAdvStochInt1} and~\eqref{EqAdvStochInt2} are true martingales and we obtain
\[
\mathds{E} \big[\phi(T,x,u)\big] = x^\top \tilde{P}(0) -\mathds{E} \Big[
\int_0^T \lambda \xi(t)^2 dt +  \int_0^T  \theta \Gamma  | \eta(t) |   dt \Big].
\]
Instead of maximizing expected proceeds, we can hence equivalently minimize expected price impact and adverse selection costs. We assume that the trader is not only interested in expected liquidation proceeds, but in addition also wants to minimize risk during liquidation. We incorporate both aspects in the following cost functional (cf.~Equation~\eqref{Costfunctional}):
\begin{align} 
J(T,x,u)&:=x^\top \tilde{P}(0) - \mathds{E} \big[\phi(T,x,u)\big] \!+\! \mathds{E}\Big[ \tilde \alpha\!\int_0^T\!\!\! \sigma^2 X^u(t)^2 dt \Big] =  \mathbb{E} \Big[ \int_0^T \!\! \!\big( \lambda \xi(s)^2 + \theta \Gamma |\eta(t)| + \tilde \alpha \sigma^2 X^u(t)^2 \big) dt \Big]. \label{EqCostsDP}
\end{align}
The first two terms in the cost functional capture the expected liquidation shortfall, while the last term is an additive penalty function $\tilde \alpha \int_0^T \sigma^2 X^u(t)^2 dt$ which reflects the market risk of the asset position;\footnote{Here, $\tilde \alpha$ is the personal risk-aversion parameter of the investor.} it penalizes slow liquidation and poorly balanced portfolios. It does not incorporate liquidity risk, however \cite{Kratz2012} argue that for realistic parameters, market risk outweighs liquidity risk which we therefore neglect. For deterministic liquidation strategies without dark pools, the risk term reflects the variance of the liquidation costs (see~\cite{Almgren2001}). In this case, minimizing a mean-variance functional of the liquidation costs over all deterministic strategies is equivalent to maximizing the expected utility of the proceeds of an investor with CARA preferences over all strategies (see~\cite{Schied2010}). 

The goal of the investor is to minimize her liquidation costs $J(T,x,u)$. For $\gamma=\theta\Gamma$ and $\alpha= \tilde \alpha \sigma^2$, we can apply Theorem~\ref{TheoremAdvOptLiq}.\footnote{Recall that $\pi_1$ and $\pi_2$ are independent by Assumption~\ref{AssPrimary}~(ii). Mathematically, we can therefore replace them by one Poisson process $\pi$ in Equation~\eqref{EqAssetHoldings}.} The minimal liquidation costs are therefore given by $v(T,x)$ as in Equation~\eqref{EqAdvCandidateValueFct}, while the optimal liquidation strategy $u^*_T$ is given by Equations~\eqref{EqAdvOptimalxineu} and~\eqref{EqAdvOptimaletaneu}.

\subsection{Properties} \label{PropAdv}

We conclude by discussing the properties of the value function and the optimal strategy.
In Section~\ref{SubSecAdvPropertiesRisk} we discuss the dependence of the
optimal strategy and the value function on the adverse selection parameter $\Gamma=\frac{\gamma}{\theta}$.
In Section~\ref{SubSecAdvPropertiesNoRisk} we analyze the case of a risk-neutral investor ($\alpha = \tilde\alpha \sigma^2=0$),
which we had excluded in Sections~\ref{SecPropValAdv} and~\ref{SecVerificationAdv}.

\subsubsection{Risk-averse investors: $\pmb{\alpha=\tilde\alpha \sigma^2>0}$} \label{SubSecAdvPropertiesRisk}

Theorem~\ref{TheoremAdvOptLiq} confirms the structure of the optimal strategy and 
the value function of the Optimization Problem~\eqref{EqValueFctAdv} we had expected in the heuristics
of Section~\ref{SecAdvHeuristic} (cf.~also Figure~\ref{FigHeu}).

Both for large initial asset positions $x$ ($|x| \geq \bar{X}(T,0)$) and for small initial asset positions 
($|x| \leq \bar{X}(T,T)=\beta(T) =\frac{\Gamma}{ 2 C_0(T)}$), the value function is a quadratic polynomial.
In between, it is an ``interpolation'' of these polynomials. 

The value function and the optimal strategy for 
$|x| \leq \frac{\Gamma}{2 C_0(T)}$ are the same as the ones without dark pool and
without adverse selection (i.e., the optimal order in the dark pool is zero). 

For larger asset positions, the absolute value of order in the dark pool is 
greater than zero; after the execution of the optimal dark pool order at time $\tau$,
the asset position is $\frac{\Gamma}{ 2 C_0(T-\tau)}$. The optimal trading trajectory
until dark pool execution
for an initial asset positions $|x| = \bar{X}(T,S) \in (\bar{X}(T,T),\bar{X}(T,0))$
is given by the function $\bar{X}(\cdot,S)$ in $[0,T-S]$.

It is interesting to examine the dependence of the value function and the optimal strategy on 
the adverse selection parameter
$\Gamma$. Intuitively, costs should be higher for large adverse selection and therefore
the value function should be increasing in $\Gamma$. Furthermore, the
dark pool is less attractive for large adverse selection and therefore
optimal dark pool orders should be decreasing in $\Gamma$, whereas trading intensity
in the primary venue should be increasing in $\Gamma$ (as impact costs are relatively lower with 
respect to adverse selection costs). The following proposition confirms these intuitions. Additionally, (i) of the proposition shows that adverse selection is negligible for large initial asset positions $|x| >> \bar{X}(T,0)$ as in this case $v(T,x) \approx C_1(T,0) x^2$ which is independent of $\Gamma$.
To stress the dependence on $\Gamma$, we will add it as an argument
for the remainder of the section and write
\[
\bar{v}(T,x,\Gamma):=\bar{v}(T,x), \quad u^*_T(t,\Gamma):=u^*_T(t),\quad  \cdots 
\]

\begin{prop} \label{PropProp}
Let $x \in \mathds{R}$, $T>0$ and $S \in[0,T]$. Then
\begin{enumerate}
\item[(i)]
$C_1(T,S,\cdot)$ is constant, $C_2(T,S,\cdot)$ is strictly increasing and
$C_3(T,S,\cdot)$ is strictly decreasing.
\item[(ii)]
$\bar{X}(T,S,\cdot)$ is strictly increasing.
\item[(iii)]
$\bar{v}(T,x,\cdot)$ is strictly increasing on the interval $(0,2|x| C_0(T))$
and constant for $\Gamma > 2|x| C_0(T)$.
\item[(iv)]
$|\xi_T^*(0,\cdot)|$ is strictly increasing and
$|\eta^*_T(0,\cdot)|$ is strictly decreasing on the interval $(0 , 2|x| C_0(T))$
and constant for $\Gamma > 2|x| C_0(T)$.
\end{enumerate}
\end{prop}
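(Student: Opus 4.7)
My plan is to verify each of the four claims by inspection of the closed-form expressions of Proposition~\ref{PropAdvExplicitC} together with an envelope argument for the value function. Throughout I use $\gamma=\theta\Gamma$ and track how the $\gamma$-dependence propagates through the formulae of Section~\ref{SecPropValAdv}.

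For~(i) and~(ii), direct reading of \eqref{EqAdvC1}--\eqref{EqAdvXbar} shows that $C_1(T,S)$ carries no factor of $\gamma$; that $C_2(T,S,\Gamma)=\Gamma\,\widehat{C}_2(T,S)$ with $\widehat{C}_2>0$ by Proposition~\ref{PropAdvExplicitC}; that $C_3(T,S,\Gamma)=-\Gamma^2\,\widehat{C}_3(T,S)$ with $\widehat{C}_3>0$ (the integrand in~\eqref{EqAdvC3} is manifestly positive on a set of positive measure); and that $\bar X(T,S,\Gamma)=\Gamma\,\widehat{X}(T,S)$ with $\widehat{X}>0$ (the trajectory $\bar X$ is strictly positive by construction). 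These observations give all of~(i) and~(ii) immediately.

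For~(iii), if $\Gamma\ge 2|x|C_0(T)$ then $|x|\le\beta(T,\Gamma)$, so $g(T,x)=T$ and $\bar v(T,x,\Gamma)=C_0(T)x^2$ is independent of $\Gamma$. For $\Gamma_1<\Gamma_2<2|x|C_0(T)$, I feed the $\Gamma_2$-optimal control into the $\Gamma_1$-cost functional:
\[
\bar v(T,x,\Gamma_1)\le J(T,x,u^*_{\Gamma_2},\Gamma_1)=\bar v(T,x,\Gamma_2)-(\Gamma_2-\Gamma_1)\theta\,\mathbb{E}\Big[\int_0^T|\eta^*_{\Gamma_2}(t)|\,dt\Big].
\]
Because $|x|>\beta(T,\Gamma_2)$, the formula~\eqref{EqAdvOptimaletaneu} forces $|\eta^*_{\Gamma_2}(t)|>0$ on a right-neighbourhood of $t=0$ on the positive-probability event that the Poisson process has not yet jumped, so the expectation is strictly positive and the inequality is strict.

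For~(iv), the $\eta^*$-claim is immediate from~\eqref{EqAdvOptimaletaneu}. For $\xi^*$, constancy on $\Gamma\ge 2|x|C_0(T)$ is trivial, while for $|x|>\bar X(T,0,\Gamma)$ (i.e.\ $\Gamma$ small) one has $g(T,x)=0$ and $2\lambda|\xi^*_T(0)|=2C_1(T,0)|x|+C_2(T,0,\Gamma)$, strictly increasing in $\Gamma$ by~(i). In the remaining interval $\beta(T,\Gamma)<|x|<\bar X(T,0,\Gamma)$, the ODE~\eqref{DiffEqAdvXbar} and the scaling $\bar X=\Gamma\widehat X$ yield $|\xi^*_T(0)|=|x|\,\partial_T\log\widehat X(T,S)$ with $S=g(T,x,\Gamma)$ and $\widehat X(T,S)=|x|/\Gamma$. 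Strict monotonicity of $\widehat X(T,\cdot)$ on $(0,T)$ (cf.\ Section~\ref{SubsecAdvExplicit}) forces $S$ to grow with $\Gamma$, so monotonicity of $|\xi^*_T(0)|$ in $\Gamma$ is equivalent to strict $S$-monotonicity of
\[
\partial_T\log\widehat X(T,S)=\frac{C_1(T,S)}{\lambda}+\frac{C_2(T,S)}{2\lambda\,\bar X(T,S)}
\]
on $(0,T)$. This last sign check is the main obstacle: the first summand is strictly increasing in $S$ by~\eqref{IneqAdvValueCoeff}, and one verifies by differentiating \eqref{EqAdvC2} and~\eqref{EqAdvXbar} with respect to $S$---while exploiting the linearising variational identity of Lemma~\ref{LemAdvMainDiff}(ii)---that the second summand does not cancel the net increase. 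Continuity of $\xi^*_T(0,\cdot)$ at the transitions $\Gamma=|x|/\widehat X(T,0)$ and $\Gamma=2|x|C_0(T)$ finally glues the three regimes together.
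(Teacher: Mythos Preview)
Your arguments for (i)--(iii) are correct and coincide with the paper's: in each case one simply reads the $\gamma=\theta\Gamma$-dependence off the closed-form formulae \eqref{EqAdvC1}--\eqref{EqAdvXbar}, and for (iii) one uses exactly the envelope/comparison argument you wrote (the paper phrases it as ``follows directly from the form of the cost functional'').

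For (iv) your route is different from the paper's. The paper does not split into the three $\Gamma$-regimes; instead it invokes the integral representation
\[
\xi^*_T(0,\Gamma)=\frac{1}{\lambda}\int_0^{x} C_1\big(T,g(T,y,\Gamma)\big)\,dy
\]
from Proposition~\ref{PropAdvBounds}, observes that $g(T,y,\Gamma)$ is nondecreasing in $\Gamma$ for every $y$ (by (ii) and the strict $S$-monotonicity of $\bar X$), strictly so on a sub-interval of $(0,x)$, and then uses that $C_1(T,\cdot)$ is strictly increasing (Inequality~\eqref{IneqAdvValueCoeff}). This handles all three regimes in one stroke and avoids any sign check. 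Your approach via $|\xi^*_T(0)|=|x|\,\partial_T\log\widehat X(T,S)$ is also correct: carrying out the $S$-derivative with Lemma~\ref{LemAdvMainDiff}(ii) gives
\[
\frac{\partial}{\partial S}\Big(\frac{C_1(T,S)}{\lambda}+\frac{C_2(T,S)}{2\lambda\,\bar X(T,S)}\Big)
=-\frac{C_2(T,S)}{2\lambda\,\bar X(T,S)^2}\,\frac{\partial\bar X}{\partial S}(T,S)>0,
\]
so the contributions of $\partial_S C_1$ cancel \emph{exactly} and the positivity comes entirely from $C_2>0$ and $\partial_S\bar X<0$; your wording ``does not cancel the net increase'' understates this. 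The paper's integral argument is shorter and regime-free, while yours makes the mechanism behind the monotonicity (the scaling $\bar X=\Gamma\widehat X$) more explicit.
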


\begin{proof}
\begin{enumerate}
\item[(i)]
The first assertion follows directly from the Initial Value Problem for $C_1$, \eqref{DiffEqAdvC1}. The
second and the third assertion can be deduced from that by Equations~\eqref{EqAdvC2} 
and~\eqref{EqAdvC3}, respectively.
\item[(ii)]
The assertion follows directly from Equation~\eqref{EqAdvXbar}.
\item[(iii)]
Monotonicity of the value function follows directly from the form 
of the cost functional (cf.~Equation~\eqref{Costfunctional} as long as $|\eta_T^*(0,\Gamma)|>0$,
i.e., $ \Gamma \in (0, 2|x| C_0(T))$ (cf.~Equation~\eqref{EqAdvOptimaletaneu}).
\item[(iv)]
For the first assertion, let without loss of generality $x>0$ and
$\Gamma < \tilde{\Gamma} < 2 x C_0(T)$. By~(ii) and the fact that $\bar{X}$ is strictly decreasing in $S$ (cf.~also~Equation~\eqref{EqAdvDefg2}), 
$
g(T,y,\Gamma) \leq  g(T,y,\tilde \Gamma)
$
for all $y \in [0,x]$ with strict inequality for
$
\frac{\tilde{\Gamma}}{2 C_0(T)} < y < \bar{X}(T,0,\Gamma).
$
Therefore (cf.~the proof of Proposition~\ref{PropAdvBounds}),
\[
\xi^*_T(0,\Gamma)  =\frac{1}{\lambda} \int_0^x C_1(T,g(T,y,\Gamma),\Gamma) dy
	< \frac{1}{\lambda} \int\limits_0^x C_1(T,g(T,y,\tilde{\Gamma}),\tilde{\Gamma}) dy =\xi^*_T(0,\tilde{\Gamma}) 
\] 
by~(i) and the fact that $C_1$ is strictly increasing in $S$ (cf.~\eqref{IneqAdvValueCoeff}).

The assertion that $|\eta^*_T(0,\cdot)|$ is strictly decreasing follows directly from Equation~\eqref{EqAdvOptimaletaneu}. 
\end{enumerate}
\end{proof}

In Figure~\ref{FigProp}, we compare the optimal strategy with adverse selection (left picture) and without adverse selection (right picture) for a numerical example. In both pictures, thick solid lines denote realized trading trajectories. For those scenarios where the dark pool order is executed at time $\tau < T$, the dotted lines refer to the case where the dark pool order is never executed. The dashed line in the left picture corresponds to the boundary $\beta$ and the thin solid line in the right picture corresponds to optimal liquidation without dark pools. For a large asset position, the boundary $\beta$ is never crossed unless the dark pool is executed (upper solid line respectively dotted lines in the left picture). At all times, the dark pool order is such that the after execution, the position is exactly on the boundary. Afterwards, it stays (strictly) below the boundary for the entire trading horizon. In comparison to the optimal strategy without adverse selection (thick solid line in the right picture), the trading intensity is higher (cf.~Proposition~\ref{PropProp}~(iv)); it is however lower than the trading intensity without dark pools. Dark pools slow down trading in the primary venue, as the trader aims to reduce her impact costs and hopes to trade cheaper in the dark pool (see~\cite{Kratz2012}). This effect is decreased significantly by the introduction of adverse selection costs. Furthermore, the optimal order in the dark pool is smaller than without adverse selection, where always the \emph{full} remainder of the position is placed in the dark pool. The trader does not want to to liquidate the full position as she expects a favorable impending price move which she does not want to miss out completely. For the smaller asset position, the boundary $\beta$ is crossed at time $T-S$, i.e., the optimal position until $T-S$ is $\bar{X}(T,S)$ (unless the dark pool order is executed before). After the position crosses the boundary it stays (strictly) below it until the end of the trading horizon.

\begin{figure}
\centering
\begin{tabular}{ll}
\begin{tabular}{l}
\begin{overpic}[height=3.7cm, width=6cm]{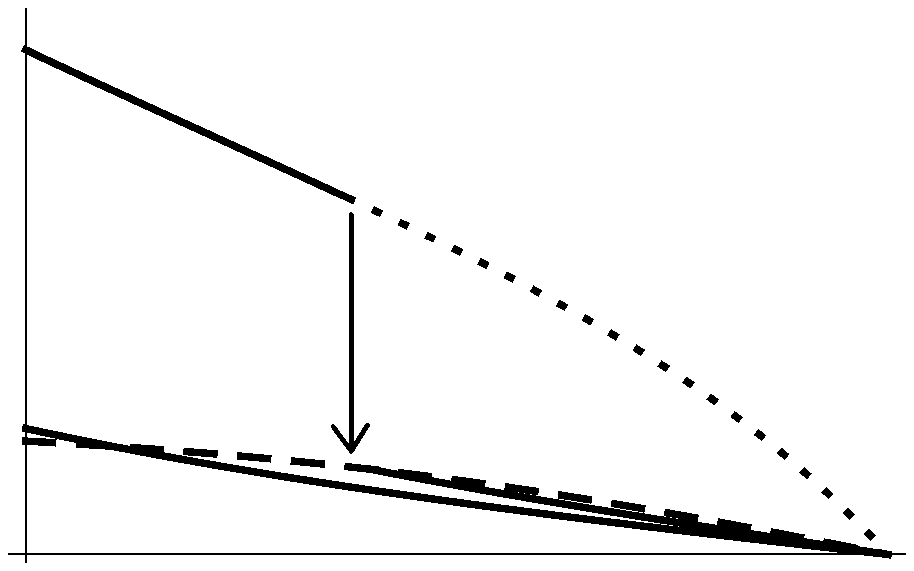}
\put(93,-3.5){\scriptsize{$T$}}
\put(5,-2.5){\scriptsize{$T-S$}}
\put(38,-2.5){\scriptsize{$\tau$}}
\put(0,62){\scriptsize{Size of asset position}}
\end{overpic}
\end{tabular} &
\hspace{1cm}
\begin{tabular}{l}
\begin{overpic}[height=3.7cm, width=6cm]{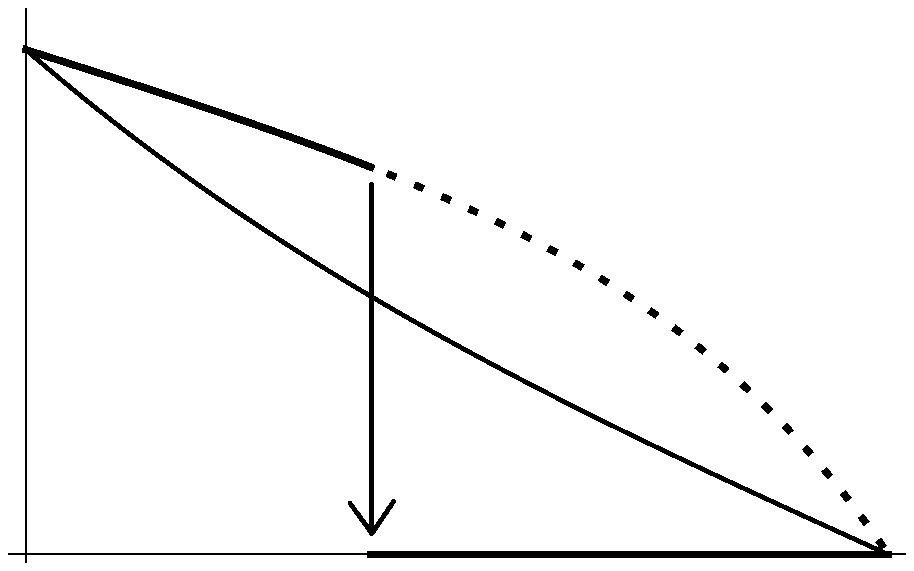}
\put(93,-3.5){\scriptsize{$T$}}
\put(39,-2.5){\scriptsize{$\tau$}}
\put(0,62){\scriptsize{Size of asset position}}
\end{overpic}
\end{tabular}
\end{tabular}\caption{Optimal trading strategies with adverse selection (left picture) and without adverse selection (right picture). In both pictures, thick solid lines denote realized trading trajectories. For those scenarios where the dark pool order is executed at time $\tau=0.4 < T=1$, the dotted lines refer to the case where the dark pool order is never executed. The dashed line in the left picture corresponds to the boundary $\beta$ and the thin solid line in the right picture corresponds to optimal liquidation without dark pools. The larger initial asset position is $x=1.2$; the smaller one is $x=0.3$ (which results in $T-S=0.1$). Furthermore, $\lambda=2.5$, $\tilde \alpha=4$ $\sigma=1$, $\theta=3$  $\Gamma=2$. 
} \label{FigProp}
\end{figure}

\subsubsection{Risk-neutral investors: $\pmb{\alpha=\tilde\alpha \sigma^2=0}$} \label{SubSecAdvPropertiesNoRisk}

In Sections~\ref{SecPropValAdv} and~\ref{SecVerificationAdv} we excluded the
case $\alpha =0$. The main reason was that in some formulae the 
term $\alpha$ appears in the denominator (cf., e.g., Equations~\eqref{EqAdvXbar},
and~\eqref{EqAdvXbarT}) and it would thus have complicated the exposition of the results. 
However, the case 
$
\alpha=\tilde \alpha \sigma^2=0
$
can be treated in a similar way as the case $\alpha=\tilde \alpha \sigma^2 > 0$. 

We define $C_1$, $C_2$, $C_3$ and $\bar{X}$ as before by the 
Initial Value Problems~\eqref{DiffEqAdvC1}~-~\eqref{DiffEqAdvXbar}.
Note that only the differential equation for $C_1$ depends on $\alpha$. Therefore
$C_2$, $C_3$ and $\bar{X}$ only depend on $\alpha$ through $C_1$. We can
compute solutions of the initial value problems directly or by taking the limits
for $\alpha \rightarrow 0$
in Equations~\eqref{EqAdvC1}~-~\eqref{EqAdvXbar}.

Note first that $\alpha=0$ implies
\[
\tilde{\theta} = \theta, \quad C_0(T) = \frac{\lambda}{T}.
\]
Therefore,
\begin{equation} \notag
\bar{X}(T,T)=\beta(T) =\frac{\Gamma}{2 C_0(T)}=\frac{\Gamma}{2 \lambda} T.
\end{equation}
We can compute the partial derivative of $\bar X$ with respect to $S$ and obtain
$
\frac{\partial \bar{X}}{\partial S} (T,S)=0;
$
therefore
\begin{equation} \label{EqAdvNoRiskXs}
\bar{X} (T,S) =\bar{X}(T,T)=\frac{\Gamma}{2 \lambda} T
\end{equation}
for $T>0$ and $S\in [0,T]$. Hence, we expect only two different trading regions. The dark pool
is only used for $|x| > \frac{\Gamma}{2 \lambda} T$, 
and we expect the value function to be given by
\begin{equation}
w(T,x) = \begin{cases} \label{EqNoRiskAdv}
C_0(T) x^2=  \frac{\lambda}{T} x^2 & \text{if } |x| \leq \frac{\Gamma}{2 \lambda} T \\
C_1(T,0) x^2 +C_2(T,0) |x| + C_3(T,0) & \text{if } |x| > \frac{\Gamma}{2 \lambda} T.
\end{cases}
\end{equation}
As in Lemma~\ref{LemAdvMainDiff} we obtain that
\[
\frac{\partial C_1}{\partial S} (T,S) \bar{X}(T,S)^2 +
\frac{\partial C_2}{\partial s} (T,S) \bar{X}(T,S) +
\frac{\partial C_3}{\partial s} (T,S) =0,
\]
\[
2 \frac{\partial C_1}{\partial S} (T,S) \bar{X}(T,S) +
\frac{\partial C_2}{\partial S} (T,S) =0.
\]
Thus, by Equation~\eqref{EqAdvNoRiskXs},
\begin{equation*}
\frac{\partial}{\partial s} \big( C_1(T,S) \bar{X}(T,S)^2 +
C_2(T,S) \bar{X}(T,S) + C_3(T,S) \big) =0,
\end{equation*}
\begin{equation*}
\frac{\partial }{\partial s} \big( 2 C_1(T,S) \bar{X}(T,S) +
C_2(T,S) \big) =0,
\end{equation*}
in particular
\begin{equation} \notag
C_1(T,0)\Big(\frac{\Gamma}{2 \lambda} T\Big)^2 +
C_2(T,0)\frac{\Gamma}{2 \lambda} T + C_3(T,0)  =C_0(T) \Big(\frac{\Gamma}{2 \lambda} T\Big)^2,
\end{equation}
\begin{equation} \notag
2 C_1(T,0) \frac{\Gamma}{2 \lambda} T + C_2(T,0)  = 2 C_0(T) \frac{\Gamma}{2 \lambda} T.
\end{equation}
We can deduce that 
$w(T,\cdot)$ and$\frac{\partial w}{\partial x} (T,\cdot)$
are continuous. However (cf.~the proof of Theorem~\ref{CorAdvConvexx})
\[
\frac{\partial^2 w}{\partial x^2}(T,x) =
\begin{cases}
2 C_0(T)  & \text{if } |x| < \frac{\Gamma}{2 \lambda} T \\
2 C_1(T,0) < 2 C_0(T)  & \text{if } |x| > \frac{\Gamma}{2 \lambda} T.
\end{cases}
\]
Defining the candidate optimal strategy as before by
\begin{align} 
\xi_T^*(t)=\xi^*(T-t,y)&:= \frac{1}{2 \lambda} \frac{\partial w}{\partial x} (T-t,y) =
\begin{cases} \label{NoRiskAdv1}
	\frac{2 C_1(T-t,0) y+ \sgn(y) C_2(T-t,0)}{2 \lambda}& \text{if } |y| > \frac{\Gamma}{2 \lambda} (T-t) \\
	\frac{y}{T-t}& \text{if } |y| \leq \frac{\Gamma}{2 \lambda} (T-t),
\end{cases} \\
\eta^*_T(t) =\eta^*(T-t,y)&:= 
	\begin{cases} \label{NoRiskAdv2}
	\sgn(y) \Big(| y |- \frac{\Gamma}{2 \lambda} (T-t) \Big) &\text{if } |y| >\frac{\Gamma}{2 \lambda} (T-t) \\
	0 &\text{if } |y| \leq \frac{\Gamma}{2 \lambda} (T-t),
	\end{cases} 
\end{align}
where $y=X_T^*(t-)$ is the position at time $t$,
we obtain that $\xi^*(T-t, \cdot)$ is continuous but \emph{not} differentiable at
\[ 
|y| =\frac{\Gamma}{2 \lambda} (T-t).
\]
All steps of the proof of Theorem~\ref{TheoremAdvOptLiq} can be replicated for 
the case $\alpha=0$ in a straightforward manner. The solution of the Optimization 
Problem~\eqref{EqValueFctAdv} is given by Equations~\eqref{NoRiskAdv1}
and~\eqref{NoRiskAdv2}. The value function is given by $w$ as in Equation~\eqref{EqNoRiskAdv}.

\begin{figure}
\centering
\begin{tabular}{lll}
\begin{tabular}{l}
\begin{overpic}[height=2.7cm, width=4.5cm]{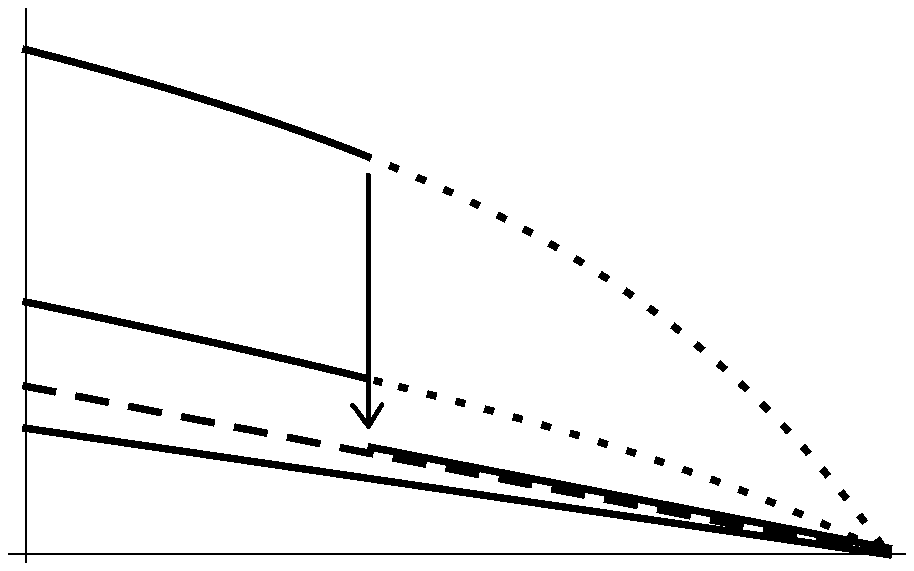}
\put(93,-3.5){\scriptsize{$T$}}
\put(38,-2.5){\scriptsize{$\tau$}}
\put(0,62){\scriptsize{Size of asset position}}
\end{overpic}
\end{tabular} &
\begin{tabular}{l}
\begin{overpic}[height=2.7cm, width=4.5cm]{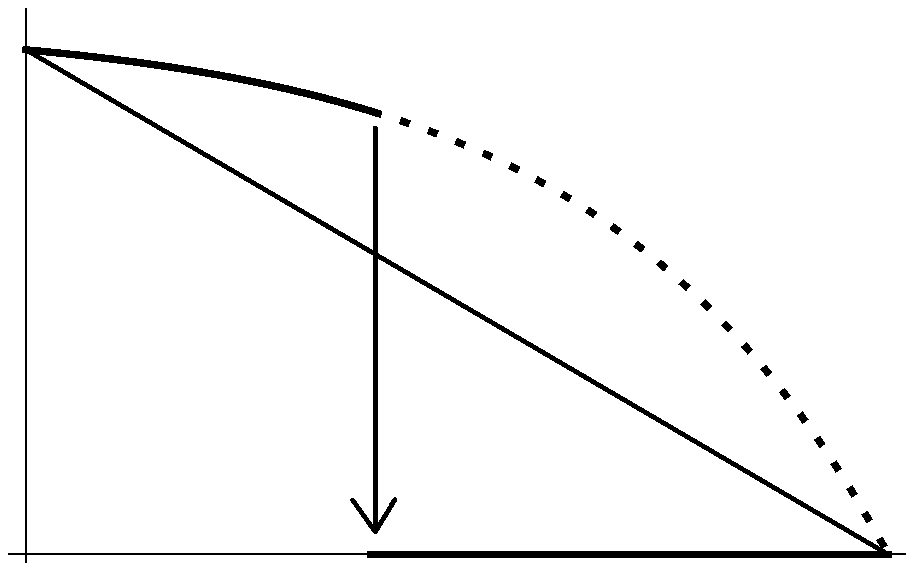}
\put(93,-3.5){\scriptsize{$T$}}
\put(39,-2.5){\scriptsize{$\tau$}}
\put(0,62){\scriptsize{Size of asset position}}
\end{overpic}
\end{tabular} &
\begin{tabular}{l}
\begin{overpic}[height=2.7cm, width=4.5cm]{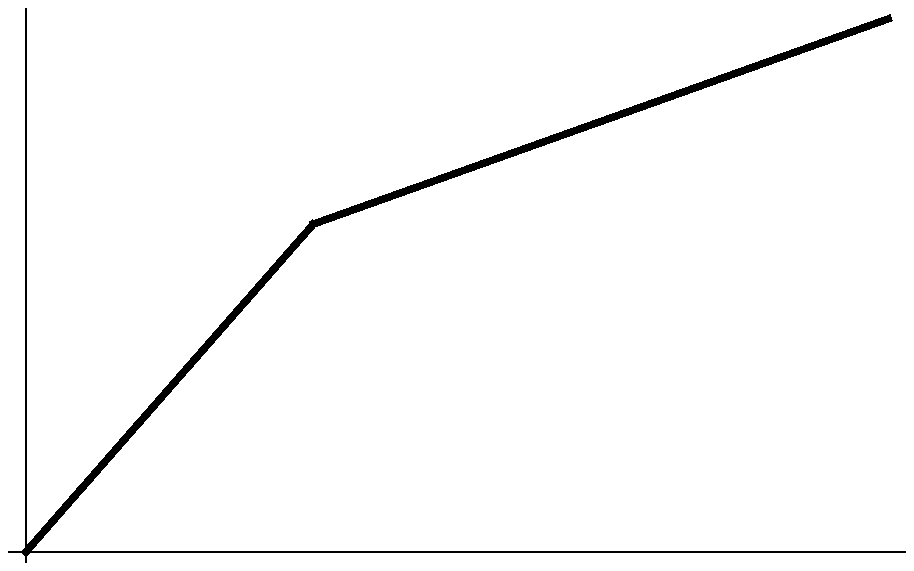}
\put(99,0.5){\scriptsize{$x$}}
\put(28,-4){\scriptsize{$\frac{\Gamma T}{2 \lambda}$}}
\put(0,62){\scriptsize{Trading intensity $\xi^*_T(0)=\xi^*(T,x)$}}
\end{overpic}
\end{tabular}
\end{tabular}\caption{The left and the middle picture illustrate the same liquidation scenarios as Figure~\ref{FigProp} for risk-neutral investors. In addition, an intermediate initial asset position is displayed ($x=0.6$). The right picture illustrates the fact that $\xi^*(T,\cdot)$ is not differentiable
at $x=\frac{\Gamma T}{2 \lambda}$. All parameters are the same as in Figure~\ref{FigProp}.}
\label{FigAdvNoRisk}
\end{figure}

We illustrate the structure of the optimal strategy in the left and the middle picture of Figure~\ref{FigAdvNoRisk}. All lines have the same meaning as in Figure~\ref{FigProp}; we only add a realized trajectory for an intermediate initial asset position which is only slightly larger than the boundary $\beta(T)=\frac{\Gamma}{2 \lambda} T$.  We observe that the optimal trading trajectory $X^*_T(t)$
never crosses the boundary $\frac{\Gamma}{2 \lambda} (T-t)$,
provided there is no dark pool execution. 
Let us briefly comment on this structure. For risk-neutral optimal liquidation
without dark pool, the optimal trading intensity is constant
(see, e.g., \cite{Kratz2012}; cf.~also the thin solid line in the middle picture which refers to risk-neutral liquidation without dark pool). The boundary itself is linear,
and it is the trading trajectory of the optimal strategy (without dark pool) with initial
position \emph{on} the boundary.
If the initial position is above the boundary, the usage of the dark pool \emph{slows down} the optimal trading intensity, i.e.,
if the dark pool is not
executed,
the trading trajectory is concave and thus never crosses the boundary. 
If the dark pool order is executed (in the displayed scenario at time $\tau$), the resulting position is again on the boundary. However, it does \emph{not cross} it but stays on it until the end of the trading horizon. Finally, the boundary $\beta$ is decreasing in $\alpha$ (cf., e.g., Equation~\eqref{EqCandidateb}). This results in the small position in the left picture to be below the boundary from the beginning. Again, it is optimal to liquidate linearly in this case.

The right picture of Figure~\ref{FigAdvNoRisk} illustrates
the fact outlined above that the optimal trading intensity $\xi^*(T-t,\cdot)$ is not differentiable
at $x=\frac{\Gamma T}{2 \lambda}$.

\appendix



\bibliographystyle{abbrvnat}
\bibliography{bibliography}

\end{document}